\documentclass{article}

\usepackage{fullpage}
\usepackage{mkolar_definitions}
\usepackage{natbib}
\usepackage{multirow}
\usepackage{rotating}

\usepackage{lmodern}
\usepackage{xcolor}
\usepackage{graphicx}
\usepackage{pstricks-add}

\newbox{\LegendeA}
\savebox{\LegendeA}{
    (\begin{pspicture}(0,0)(0.6,0)
    \psline[linewidth=0.04,linecolor=black](0,0.1)(0.6,0.1)
    \end{pspicture})}
\newbox{\LegendeB}
    \savebox{\LegendeB}{
    (\begin{pspicture}(0,0)(0.6,0)
    \psline[linestyle=dashed,dash=1pt 2pt,linewidth=0.04,linecolor=black](0,0.1)(0.6,0.1)
    \end{pspicture})}

\makeatletter
\let\pen\@undefined
\let\hat\widehat
\newcommand{\pen}{\rho}
\makeatother

\newcommand{\supp}{{\rm supp}}
\newcommand{\uSc}{_{S^{C}}}
\newcommand{\acro}{HIPPO }

\newcommand{\oraclek}{\eqref{eq:known_mean_oracle1}, \eqref{eq:known_mean_oracle2}, \eqref{eq:known_mean_oracle3}}

\begin{document}

\title{\LARGE{Mean and variance estimation in high-dimensional heteroscedastic models with non-convex penalties}}
\author{
\begin{tabular}{ccc}
\Large{James Sharpnack} &\hspace{2cm}& \Large{Mladen Kolar}\\
&\\
Mathematics Department && Booth School of Business\\
UC San Diego && The University of Chicago \\
La Jolla, CA 92093, USA && Chicago, IL 60637, USA\\
\end{tabular}
}

\maketitle

\begin{abstract}
Despite its prevalence in statistical datasets, heteroscedasticity (non-constant sample variances) has been largely ignored in the high-dimensional statistics literature.
Recently, studies have shown that the Lasso can accommodate heteroscedastic errors, with minor algorithmic modifications \citep{Belloni2012Sparse, Gautier2013Pivotal}.
In this work, we study heteroscedastic regression with a linear mean model and a log-linear variances model with sparse high-dimensional parameters.
We propose estimating variances in a post-Lasso fashion, which is followed by weighted-least squares mean estimation.
These steps employ non-convex penalties as in \cite{fan01variable}, which allows us to prove oracle properties for both post-Lasso variance and mean parameter estimates.
We reinforce our theoretical findings with experiments.
\end{abstract}
\smallskip
\noindent \textbf{Keywords.} heteroscedasticity, high dimensional regression, variance estimation, model selection, HIPPO

\section{Introduction}
\label{sec:introduction}

Statistical inference in high-dimensions addresses the problem of
extracting meaningful information from datasets where the number of
variables $p$ can be significantly larger than $n$.  In order to adapt
linear regression to the high-dimensional regime, the statistical and
algorithmic efficiency of penalized least squares methods have been
extensively studied.  Among the most prominent of such procedures is
the Lasso \citep{tibshirani96regression}, Adaptive Lasso
\citep{huang08adaptive}, and the SCAD penalty
\citep{fan01variable}. The majority of this work has focused on mean estimation in
the homoscedastic setting, in which the sample variances are
identical.  In the classical, low-dimensional, setting the effect of
heteroscedasticity and the estimation of variance parameters has been
extensively studied \citep{rutemiller1968estimation,
  carroll1988effect} Recent studies have addressed the problem of mean
estimation under heteroscedasticity in high-dimensions, where the
sample variances may differ \citep{Belloni2012Sparse,
  Gautier2013Pivotal}.  While much of this work has shown that
penalized least squared procedures retain their statistical guarantees
under mild heteroscedasticity, little work has focused on jointly
performing model selection for both mean and variances.  In this work,
we study a simple procedure for estimating both the mean and variance
parameters and examine its ability to correctly identify the sparsity pattern and its asymptotic distribution.

Throughout this work, we assume that we have the usual heteroscedastic Gaussian linear model,
\begin{equation}
  \label{eq:model}
  y_i = \xb_i' \betab^\star + \sigma(\xb_i, \thetab^\star) \epsilon_i, 
  \quad i = 1,\ldots,n,
\end{equation}
where $\xb_i \in \RR^p$ are the observed covariates, 
$\thetab^\star, \betab^\star \in \RR^p$ are unknown parameter,  
$\{\epsilon_i\}_{i=1}^n$ are independent, normally distributed with
mean $0$ and variance $1$ and the function $\sigma$
is of log-linear form, 
\begin{equation}
\label{eq:var_form}
2 \log \sigma(\xb_i, \thetab) = \xb_i' \thetab.
\end{equation}
Modeling the log variance as a linear combination of the explanatory variables, as in \eqref{eq:var_form} is a common choice as it guarantees positivity and is also capable of capturing variance that may vary over several orders of magnitudes \citep{carroll88transformation, harvey76}.
In this paper, we study penalized estimation of the high-dimensional heteroscedastic linear regression model, \eqref{eq:model}, where $p \gg n$.

We study a natural procedure for estimating the mean and variance parameters, called heteroscedastic iterative penalized pseudolikelihood optimizer (HIPPO) first proposed in \citep{Kolar2012Variance}.
This method assumes that a mean estimation procedure such as the lasso is first performed, and then using this mean estimate, one performs model selection for the variance parameter.
Finally, an updated mean parameter is constructed with a regularized weighted least squares procedure.
Thus HIPPO not only outputs a heteroscedasticity aware mean parameter estimate, but also provides variance parameter estimates.
With these parameters the practitioner has an estimate for the predictive distribution given a new sample (by plugging in the estimates $\hat \betab, \hat \thetab$).
Aside from providing superior mean estimates, a primary reason to model variances is that it provides us with estimated predictive distributions.
Furthermore, determining which covariates drive the variance may be of scientific interest.
In economics and finance, volatility of macroeconomic variables and financial instruments is of significant interest. 
(In economic time series heteroscedasticity is modeled in an autoregressive conditional heteroscedastic (ARCH) model \citep{engle1982autoregressive}.)
When rating insurance policies, it is common practice to fit both mean and dispersion parameters in double generalized linear models \citep{peters2009model}, which is a class that the heteroscedastic Gaussian model falls into.
In environmental modeling, climate variability has been recognized as one of the hallmarks of global climate change and has been added to the discussion surrounding the impact of human activity on the environment \citep{karl1995trends}.
More generally, extreme event probabilities are driven primarily by the variance of the predictive distribution for the model, \eqref{eq:model}, so for any application where extreme events are of interest, estimating variances is essential.

% In many applications, particularly financial and actuarial, much of the interest is in the tail probabilities of predictive distributions.
% For the normal distribution, estimating the probability of an extreme event from only a few samples is possible.
% Extreme value theory shows us that the $q$th quantile, $z_{1/m}$, of a normal distribution with possibly non-zero mean and variance $\sigma$ has the following property (as $m \rightarrow \infty$)
% \[
% q_{1/m} = \sigma \sqrt{2 \log m} (1 + o(1))
% \]
% So if $\hat \sigma$ is a consistent estimator and $\hat q_{1/m} = \hat \sigma \sqrt{2 \log m}$ then we know that
% \[
% \hat q_{1/m} = \hat \sigma \sqrt{2 \log m} \overset{\PP}{\longrightarrow} \sigma \sqrt{2 \log m}.
% \]
% Hence, while the bulk of the distribution is dictated by the mean the extreme tails of the distribution is determined by the variance.
% It stands to reason that in financial and actuarial problems in which the probability of extreme events are of interest, variance estimation is of paramount importance.

Of separate interest is providing confidence regions for mean
parameters in high-dimensions under heteroscedasticity. 
The confidence of an estimate of $\beta_j^\star$ ($j \in \{1,\ldots,p\}$) will be driven by the variances of the samples $\{\sigma(\xb_i,\thetab^\star)\}_{i=1}^n$ in relation to the $j$th covariate $\{\xb_{i,j}\}_{i=1}^n$.
We will see that, given our assumptions, our mean parameter estimates will obtain a specific asymptotic distribution.
This distribution can be inverted to obtain simple confidence region for $\betab^\star$ that is asymptotically valid given our conditions.

The main contributions of this
paper are as follows. First, we review the HIPPO (Heteroscedastic
Iterative Penalized Pseudolikelihood Optimizer) for estimation of both
the mean and variance parameters, and propose some changes to the method. 
Second, we establish theoretical guarantees in the form of oracle properties (in the sense of \citet{fan09nonconcave}) for the estimated
mean and variance parameters.  
These are significantly superior to the theoretical guarantees in \citep{Kolar2012Variance} because they require much more mild assumptions.
We examine some numerical properties of the proposed procedure on a simulation study to complement our theoretical findings.

\subsection{Notation}

Throughout this work matrices and vectors are bolded while scalars are not.
We will let $\Xb = (\xb_1, \ldots, \xb_n)' = (\Xb_1, \ldots, \Xb_p)$ denote the
$n \times p$ matrix of predictors, $\yb, \epsilonb$ are the $n$-vector of responses, and noise respectively.
We will use $O()$ and $o()$ notation to indicate boundedness and convergence of sequences and their probabilistic counterparts $O_\PP(),o_\PP()$.
Throughout the paper we use $[n]$ to denote the set $\{1,\ldots,n\}$.
For any index set $S \subseteq [p]$, we denote $\betab_S$ to be the
subvector containing the components of the vector $\betab$ indexed by
the set $S$, and $\Xb_S$ denotes the submatrix containing the columns
of $\Xb$ indexed by $S$. For a vector $\ab \in \RR^n$, we denote ${\rm
  supp}(\ab) = \{j\ :\ a_j \neq 0\}$ the support set, $\norm{\ab}_q$,
$q \in (0,\infty)$, the $\ell_q$-norm defined as $\norm{\ab}_q =
(\sum_{i\in[n]} a_i^q)^{1/q}$ with the usual extensions for $q \in
\{0,\infty\}$, that is, $\norm{\ab}_0 = |{\rm supp}(\ab)|$ and
$\norm{\ab}_\infty = \max_{i\in[n]}|a_i|$. For notational simplicity,
we denote $\norm{\cdot} = \norm{\cdot}_2$ the $\ell_2$ norm. For a
matrix $\Ab \in \RR^{n \times p}$ we denote $\opnorm{\Ab}{2}$ the
operator norm, $\norm{\Ab}_F$ the Frobenius norm, and
$\Lambda_{\min}(\Ab)$ and $\Lambda_{\max}(\Ab)$ denote the smallest
and largest eigenvalue respectively.

\subsection{Related work}
\textbf{Heteroscedasticity in low-dimensions.}  
Variance parameter estimation in
low-dimensions began with the study of linear forms for the variance, and extended
to higher degree polynomial forms \citep{rutemiller1968estimation,
  geary1966teacher, lancaster1968grouping}.  The estimation of
parameters of the variance when it takes on a log-linear form
in low-dimensions was comprehensively studied in
\cite{harvey76estimating}.  The author concluded that maximum
likelihood, estimated with the iterative `method of scoring', had a
significantly better asymptotic variance than previous methods
proposed.  \cite{carroll1988effect} studied a more general iterative
procedure for variance estimation, specifically they proved limiting
distributions for the mean parameter estimates after a fixed number of
iterations.  

Another classical approach to estimating
variances in the heteroscedastic Gaussian linear model is to use a
restricted likelihood for the estimating equation
\citep{patterson1971recovery}.  
%We will follow the derivation in \cite{verbyla1990conditional}.  
The basic idea is that one can
separate the data $\yb$ into two orthogonal components, one of which is ancillary to $\betab^\star$.
This way the variance parameter $\thetab^\star$ can be estimated from that component by maximizing its marginal likelihood, forming the residual (or restricted) maximum likelihood (REML).
Unfortunately, this method is only valid when $p < n$, so is not suitable for our purposes.

More recently non-parametric procedures were constructed to estimate
variances under heteroscedasticity.  \cite{rigby1996semi} proposed a
general procedure by which a generalized additive model could be
estimated for both the mean and variance (in low dimensions).
\cite{fan98efficient} studied local linear estimates for the mean and
similarly estimating the variance from the resulting residuals.  Other
estimators were constructed from similar procedures under logarithmic
transformations of the residuals \citep{yu2004likelihood,
  chen2009conditional}.  None of these methods are appropriate in
high-dimensions because they do not perform model-selection under a
sparsity assumption.  

% \begin{equation}
%   \begin{aligned}
%     &\yb_1 = \Lb_1' \yb \quad\textrm{ s.t. }\quad \Lb_1' \Xb = \Ib_p \\
%     &\yb_2 = \Lb_2' \yb \quad\textrm{ s.t. }\quad \Lb_2' \Xb = \zero \\
%   \end{aligned}
% \end{equation}
% In this situation $\yb_1$ is sufficient for $\betab$, and maximum
% likelihood for known variances can be obtained through weighted least
% squares (WLS).  
% The residual (or restricted) maximum likelihood (REML)
% method then estimated $\sigmab$ by maximizing the marginal likelihood
% of $\yb_2$, which is functionally independent of $\betab$ (by the
% condition $\Lb_2' \xb = \zero$).  
% , because in the case that $p \ge n$,
% $\Xb'$ generally has a trivial null space and there exists no
% non-trivial $\Lb_2$.  
% While this is not the only derivation of REML
% (see \cite{harville1974bayesian, cooper1977note}), all of them
% presuppose that $p < n$.

Of separate interest is the
effect that heteroscedasticity has on standard regression procedures
that assume homoscedasticity.  While it is the case that ordinary
least squares (OLS) is consistent and enjoys a central limit theorem
despite heteroscedasticity (under mild conditions), the classical
estimate of standard errors is no longer consistent
\citep{eicker1967limit}.  In a landmark paper,
\cite{white1980heteroskedasticity} showed that with minimal
assumptions an estimate of standard errors could be formed by
estimating directly the asymptotic variance of OLS coefficients.
\cite{rao1970estimation} developed an estimator for linear functionals
of the variances which is asymptotically equivalent to that of
\cite{white1980heteroskedasticity}.  This work falls more generally
under the moniker generalized estimating equations (GEE), in which one
presupposes that the likelihood is misspecified and one attempts to
quantify the effect of misspecification on the maximum likelihood
estimates \citep{zieglerGEE, royall1986model}.  (In this way, the
supposed likelihood is called a pseudo-likelihood, which is a name
that we will be using throughout this work.)

\textbf{High dimensional regression.}
The penalization of the empirical loss by the $\ell_1$ norm has become a
popular tool for obtaining sparse models and a vast amount of literature
exists on theoretical properties of estimation procedures
\citep[see,e.g.,][and references therein]{zhao06model,
  wainwright06sharp, zhang09some, zhang08sparsity} and on efficient
algorithms that numerically find estimates \citep[see][for an
extensive literature review]{bach11optimization}.  Due to limitations
of the $\ell_1$ norm penalization, high-dimensional inference methods
based on the class of concave penalties have been proposed that have
better theoretical and numerical properties
\citep[see, e.g., ][]{fan01variable, fan09nonconcave, lv09unified,
  zhang11general}. 

The HIPPO procedure employs a non-convex penalty function to impose sparsity in the estimates.
Nonconvex penalties are commonly used to reduce bias in estimation.  A
number of authors have proposed non-convex penalties, including smoothly clipped absolute deviations (SCAD) \citep{fan01variable}, minimax concave (MC) penalty \citep{zhang2010nearly}, and capped $\ell_1$
\citep{Zhang2010Analysis, Zhang2013Multi}. See \cite{Fan2010selective} for a
recent survey. The oracle estimator is a local solution to the
optimization problem \citep{Kim2008Smoothly, Fan2011Nonconcave}. However, finding
this particular solution is problematic in practice. A number of
recent papers have studied properties of local solutions obtained by
particular numerical procedures (\cite{Zhang2010Analysis},
\cite{Zhang2013Multi}, \cite{Wang2013Calibrating},
\cite{Loh2013Regularized}, \cite{Fan2012Strong},
\cite{Wang2013Optimal}). Under suitable conditions on the design matrix
and the signal size, \cite{Fan2012Strong} and \cite{Loh2013Regularized}
establish that two step procedures obtain the oracle
solution. However, these conditions are quite restrictive. Properties
of global solution to non-convex problem were studied in
\cite{Kim2012Global, Zhang2012General}.

There have been recent advances on providing mean estimates in high dimensions that can handle heteroscedastic errors.
\cite{Belloni2012Sparse} provide an algorithm that adapts the $\ell_1$ penalty to compensate for the effect of different sample variances.
Similarly, \cite{Gautier2013Pivotal}, introduce a family of $\ell_1$ minimization methods called the self-tuned Dantzig estimator which has been shown to handle heteroscedastic errors.
The HIPPO algorithm will use the method of \cite{Belloni2012Sparse} to provide an initial estimate for $\betab^\star$.
There has also been some recent work addressing the estimation of variance parameters in high dimensions.
Notably, \cite{daye2012high} proposes the HHR procedure, that iteratively performs $\ell_1$ penalized likelihood minimizations, but do not provide statistical guarantees.
\cite{cai2008adaptive} developed a wavelet thresholding procedure that is adaptive to the smoothness of the mean and variance functions, but the results are difficult to extend to non-orthogonal design matrices.
\cite{Dalalyan2013Learning} proposes a second-order convex program with group penalties to estimate the mean and variance parameters jointly.
They avoid the likelihoods non-convexity by performing a transformation that makes the likelihood jointly convex, but the choice of transformation (however convenient from an algorithmic standpoint) does not coincide with the log-linear variance model that we consider here.

%Coordinate descent procedures: \cite{Mazumder2011SparseNet:,Breheny2011Coordinate}
% \textbf{Modified Lasso procedures.}
% {\bf todo:} talk about adaptive lasso, gauss-lasso, post-lasso and montanari...
% \textbf{Choosing penalty parameter.} 
% {\bf todo:} talk about extended bic for non-convex penalties

%%% Local Variables:
%%% TeX-master: "paper.tex"
%%% End:

\section{Methodology}
\label{sec:method}

The primary difficulty with jointly estimating the mean and variance parameters, even in the low dimensional setting (where $p$ is fixed and $n \rightarrow \infty$) is that the likelihood for the model \eqref{eq:model} is not jointly convex in $\betab$ and $\thetab$.
Indeed, the negative log-likelihood for the mean and variance parameters is
\begin{equation}
  \label{eq:loglikelihood}
  \ell(\betab,\thetab; \yb, \Xb) = \sum_{i=1}^n (y_i - \xb_i \betab)^2 \exp (-\xb_i'\thetab) + \xb_i'\thetab,
\end{equation}
up to additive constants.
Because the likelihood with $\thetab$ fixed is convex in $\betab$ and vice versa, a coordinate descent method with a regularized likelihood is a natural approach to joint estimation.
In \citep{Kolar2012Variance}, a simplified method called HIPPO was proposed, where only the first few iterations of the coordinate descent procedure are performed.

The justification for stopping the coordinate descent procedure early is derived from pseudolikelihood theory.
Suppose that we have an initial mean estimate $\hat \betab$, then if we consider minimizing $\ell(\hat \betab, \thetab; \yb, \Xb)$ with $\hat \betab$ fixed, resulting in $\hat \thetab$, then if $\hat \betab$ is close to $\betab^\star$ then $\hat \thetab$ will be close to the minimizer of $\ell(\betab^\star, \thetab; \yb, \Xb)$.
We then think of $\ell(\betab^\star, \thetab; \yb, \Xb)$ as the true likelihood, and $\ell(\hat \betab, \thetab; \yb, \Xb)$ as the pseudo-likelihood.
HIPPO works because the initial lasso procedure produces an estimate for $\hat \betab$ which is good enough in the sense that the pseudo-likelihood is a good approximation of the true likelihood for $\thetab^\star$.
In this paper, we propose some modifications to that algorithm and show that under mild conditions this performs as well as the oracle procedures (where the fixed parameter and the support set of the free parameter is known).
Throughout this work we will consider a penalty function, $\rho_\lambda: \RR_+ \to \RR_+$, which has the effect of enforcing the sparsity in the resulting estimates.

HIPPO is comprised of three steps (which we will refer to as stages):
\begin{enumerate}
\item HIPPO solves a %\mcomment{shall we call this feasible post-LASSO} - I think that the wording is fine
  LASSO program (Algorithm A1 in \cite{Belloni2012Sparse}) for
  estimating $\betab^\star$ resulting in $\hat \betab$.

\item HIPPO forms the penalized pseudo-likelihood estimate for $\thetab^\star$ by solving 
\begin{equation}
  \label{eq:stage2}
\begin{aligned}
  \hat\thetab = \arg \min_{\thetab \in \RR^{p}}
  \sum_{i = 1}^n \xb_i'\thetab  + 
  \sum_{i = 1}^n \hat{\eta}_i^2 \exp(-\xb_i'\thetab) 
   + 4 n \sum_{j = 1}^p \rho_{\lambda_j^T}(|\theta_j|)
\end{aligned}
\end{equation}
where $\hat{\etab} = \yb - \Xb \hat{\betab}$ is the vector of residuals.
Furthermore, $\lambda_j^T = \lambda_T \| \Xb_j \|/n$ for $j \in [p]$, where $\lambda_T$ is an appropriately chosen tuning parameter.

\item Finally, HIPPO computes the reweighted estimator of the mean by solving
\begin{equation}
  \label{eq:stage3}
  \hat\betab_w = \arg \min_{\betab \in \RR^{p}}
  \sum_{i = 1}^n \frac{(y_i - \xb_i'\betab)^2}{\hat\sigma^2_i} + 
  2 n \sum_{j = 1}^p \pen_{\lambda_j^S}(|\beta_j|)
\end{equation}
where $\hat\sigma_i = \exp(\xb_i'\hat\thetab/2)$ are the weights.  
Likewise, $\lambda^S_j = \lambda_S n^{-1} \sqrt{\sum_{i = 1}^n x_{i,j}^2 / \hat \sigma_i^2}$ for an appropriately chosen tuning parameter $\lambda_S$.
\end{enumerate}
The details of HIPPO differ here from \citep{Kolar2012Variance} in the
choice of penalty parameters $\lambdab^T, \lambdab^S$ and that stage 1
here uses the LASSO procedure with heteroscedasticity adjusted
penalties of \citet{Belloni2012Sparse}.  These modifications enable us
to demonstrate that HIPPO enjoys significantly stronger theoretical
guarantees than those established in \citet{Kolar2012Variance}.

As was mentioned, the intuition behind HIPPO is that with the
estimate, $\hat \betab$, from stage 1 we can form a penalized
pseudo-likelihood for $\thetab$ given by \eqref{eq:stage2}.  We call
this a pseudo-likelihood, because it can be thought of as an
approximation to the likelihood function for $\thetab^\star$ with
$\betab^\star$ known.  In classical statistics literature, it is
common in misspecified models to consider maximum likelihood methods
as minimizers of an objective function that differs from the true
likelihood.  Central limit theorems have been derived for such maximum
pseudo-likelihood estimators using generalized estimating equations
\citep{zieglerGEE}.  Similarly, in stage 3, the act of reweighting in
effect makes the penalized pseudo-likelihood in \eqref{eq:stage3} much
closer to the true penalized likelihood with known $\thetab^\star$.
%Our theoretical analysis hinges on the pseudolikelihood intuition.

HIPPO is closely related to the iterative HHR algorithm of
\citet{daye2012high}.  HIPPO differs for HHR by the choice of penalty
functions and by the fact that we advocate only running the three
stages of HIPPO as opposed to continuing to iterate stages 2 and 3
with the updated $\betab$ and $\thetab$ parameters.  This
recommendation is justified by theoretical and experimental results.

\subsection{Properties of the penalty}

The penalty function, $\pen_\lambda$, is chosen so that the resulting
estimates satisfy three properties: unbiasedness, sparsity and
continuity.  The sparsity condition means that our estimator has the
same support as the true parameter with probability approaching $1$ (a
property commonly known as sparsistency).  Unlike the lasso, our
penalties are chosen so that when the signal size is strong enough the
penalty does not incur a bias on the reconstructed signal.  To provide
us with a theoretical comparison, we can think about the maximum
likelihood estimators that we could construct with the knowledge of
the true sparsity sets, $S = \supp(\betab^\star)$ and $T =
\supp(\thetab^\star)$.  We call these estimators the {\em oracle
  estimators}.  It is our goal to provide minimal conditions under
which HIPPO attains the same asymptotic distribution as the oracle
estimators, and we call this the {\em oracle property}.  The
asymptotic unbiasedness assumption is critical if we hope that HIPPO
will achieve the oracle property.

Concave penalty functions are known to admit solutions that are
asymptotically unbiased.  Examples are the smoothly clipped absolute
deviation (SCAD) penalty \citep{fan01variable}, the minimax concave
(MC) penalty \citep{zhang2010nearly} and a class of folded concave
penalties \citep{lv09unified}.  The SCAD penalty can be defined by its
derivative,
\begin{equation}
\label{eq:SCAD}
\pen_{\lambda}'(\beta) = \lambda \left[ 
     I\{ |\beta| \le \lambda \} +
     \frac{(a \lambda - |\beta|)_+}{(a-1)\lambda} I\{|\beta| > \lambda\} 
   \right],
\end{equation}
where $a > 2$ is a fixed parameter and $\pen_\lambda(0) = 0$.  The
intuition behind the specific form for SCAD is that for a neighborhood
around $0$ it acts like the $\ell_1$ penalty, shrinking small
components toward $0$.  While further from $0$ the effect of the
penalty diminishes until it becomes constant for large enough values
of $\beta$.  Hence, the shrinkage effect is reduced for larger
components, resulting in zero bias in these coordinates.

More generally, the penalty function is assumed to satisfy the
following properties
\begin{enumerate}
\item[\bf (P1)] $\pen_{\lambda}(0) = 0.$
\item[\bf (P2)] The function $\pen_{\lambda}$ satisfies $\pen_{\lambda}(\beta_0+\beta_1)
  \leq \pen_{\lambda}(\beta_0) + \pen_{\lambda}(\beta_1)$ for all $\beta_0,\beta_1 \geq 0$.
\item[\bf (P3)] The derivative $\pen'_{\lambda}(\beta)$ is continuous on $\beta \in (0,
  \infty)$ and normalized so that $\lim_{\beta \rightarrow 0+}
  \pen'_{\lambda}(\beta) = \lambda$.
\item[\bf (P4)] There exists a constant $b > 0$ such that 
  \[
  \begin{aligned}
    \pen_{\lambda}'(\beta) &\leq \lambda, \forall 0 < \beta < b \lambda\\
    \pen_{\lambda}'(\beta) &= 0, \forall \beta \geq b \lambda.
  \end{aligned}
  \]
\end{enumerate}
All of the aforementioned concave penalties satisfy the above
conditions.  For example, the MC penalty \citep{zhang2010nearly} is
likewise defined by its derivative,
\begin{equation}
  \label{eq:mcp_der}
  \pen_{\lambda}'(\beta) = \frac{(a\lambda-\beta)_+}{a},
\end{equation}
where $a>0$ is a fixed parameter and (P1)-(P4) can be verified.

\subsection{Tuning Parameter Selection and Optimization Procedure}

The optimization programs of \eqref{eq:stage2} and \eqref{eq:stage3} require the selection of the
tuning parameters $\lambda_S$ and $\lambda_T$, which balance the propensity to overfit to the data with a complex model and the underfitting when the penalty is too harsh. 
A common approach, and the one that we take, is to form a grid of candidate values for the tuning
parameters $\lambda_S$ and $\lambda_T$ and chose those that minimize
the AIC or BIC criterion
\begin{equation}
  \label{eq:AIC-criterion}
  {\rm AIC}(\lambda_S, \lambda_T) = 
   \ell(\hat\betab, \hat\thetab; \yb, \Xb)
   + 2 \widehat{df},
\end{equation}
\begin{equation}
  \label{eq:BIC-criterion}
  {\rm BIC}(\lambda_S, \lambda_T) = 
   \ell(\hat\betab, \hat\thetab; \yb, \Xb)
   +  \widehat{df} \log n
\end{equation}
where
\[
\widehat{df} = |{\rm supp}(\hat\betab)| + |{\rm supp}(\hat\thetab)|
\]
is the estimated degrees of freedom.  In Section~\ref{sec:simulation},
we evaluate the performance of the AIC and the BIC for HIPPO in
experiments.

While our theoretical results hold for any penalty function, $\pen_\lambda$, with properties (P1)-(P4), in all of our experiments we will use the SCAD penalty defined by \eqref{eq:SCAD}.
We now describe our choice of numerical procedures used to solve the
optimization problems in \eqref{eq:stage2} and
\eqref{eq:stage3}. 
These methods are based on the local linear
approximation for the SCAD penalty developed in \cite{zou08onestep},
\[
\begin{aligned}
\pen_{\lambda}(|\beta_j|) \approx
\pen_{\lambda}(|\beta_j^{(k)}|) + 
\pen_{\lambda}'(|\beta_j^{(k)}|)&(|\beta_j| - |\beta_j^{(k)}|), \quad \text{for }\beta_j \approx \beta_j^{(k)}.
\end{aligned}
\]
With this approximation, we can substitute the SCAD penalty $\sum_{j
  \in [p]}\pen_{\lambda}(|\beta_j|)$ in \eqref{eq:stage2} and \eqref{eq:stage3} with
\begin{equation}  
  \label{eq:scad_sub}
  \sum_{j = 1}^p \pen_{\lambda}'(|\hat\beta_j^{(k)}|)|\beta_j|, 
\end{equation}
and iteratively solve each objective until convergence of
$\{\hat\betab^{(k)}\}_k$. We set the initial estimates
$\hat\betab^{(0)}$ and $\hat\thetab^{(0)}$ to be the solutions of the
$\ell_1$-norm penalized problems.  The convergence of these iterative
approximations follows from the convergence of the MM
(minorize-maximize) algorithms \citep{zou08onestep}.
Recent work has demonstrated that iterative algorithms utilizing local linear expansions of concave penalties have oracle properties (for mean estimation) that hold without the restricted eigenvalue condition \citep{Wang2013Optimal, fan2014strong, Wang2013Calibrating}.

With the approximation of the SCAD penalty given in
\eqref{eq:scad_sub}, we can solve \eqref{eq:stage3} using standard
lasso solvers, for example, we use the proximal method of
\citet{beck09fast}. The objective in \eqref{eq:stage2} is minimized
using a coordinate descent algorithm, which is detailed in
\citet{daye2012high}.

\section{Theoretical Guarantees}

Throughout this section, we will be using the following notation to denote sub-polynomial functions, and polynomial-type decay.
These definitions will be used in the conditions statement and are used throughout the Appendix.
\begin{definition}
We say that a function $f(n)$ is {\em sub-polynomial} if
\[
\forall \gamma > 0, \quad f(n)=o(n^\gamma)
\]
and we denote this by $f(n) = \tilde O(1)$.
We also say that $f(n)$ has {\em polynomial decay} if
\[
\exists \gamma > 0, \quad f(n) = O(n^{-\gamma})
\]
and we denote this with $f(n) = \tilde o(1)$.  
\end{definition}

Because the penalties that we are using are necessarily non-convex, we will not in general have a unique minimum for the program \eqref{eq:stage2}.
As a result, our guarantees will state that there exists a local minimizer that has guarantees similar to what we could achieve had we known the set $T = \supp(\thetab^\star)$ and the parameter $\betab^\star$.
In this sense we say that this local minimizer enjoys {\em oracle properties}.
There is a strong precedent for this style of results in the non-convex penalty literature of \cite{fan01variable,Fan2011Nonconcave}.
Theoretical results of this type suffer from the possibility that the optimization algorithm used may in fact not select this local minimizer, but will instead converge to a local minimum with poor performance. 
These fears will be assuaged by simulation studies.

We will begin our analysis of the second stage program \eqref{eq:stage2}, by considering the variance estimator when $\betab^\star$ is known and we set $\hat \betab = \betab^\star$.
This will serve both as a benchmark and a lemma for the theoretical guarantees of the pseudo-likelihood optimizer in stage 2 (with $\betab^\star$ unknown).

\subsection{Variance estimation with $\betab^\star$ known.}

In the unlikely event that the mean parameter $\betab^\star$ is known, the program \eqref{eq:stage2} may now be considered a true likelihood.
In this setting, we will derive the oracle properties by showing that the oracle maximum likelihood estimate (OMLE), the MLE when the sparsity set $T = \supp(\thetab^\star)$ is known, is a local minimizer of \eqref{eq:stage2}.
As is standard in maximum likelihood theory, we achieve this by examining conditions under which this likelihood is well approximated by its elliptical contours.
The requisite assumptions are listed below.

\begin{enumerate}
\item[\bf (A1)] Define $T = \supp (\thetab^\star)$.
\[
\begin{aligned}
&\opnorm{\Xb_T}{2,\infty} = O \left( \sqrt{t \log p} \right)\\
&\sum_i \| \xb_{i,T} \|^2 = O \left( n t \right)\\
&\sum_i \| \xb_{i,T} \|^3 = O \left( n t^{3/2} \right)\\
\end{aligned}
\]
\item[\bf (A2)] Define the empirical covariance Tensors for $k=2,3,4,6$,
\[
\hat \Sigma = \frac 1n \sum_{i=1}^n \xb_i \xb_i', \quad \hat \Sigma^{(k)} = \frac 1n \sum_{i=1}^n \xb_i^{\otimes k}
\]
Let the following be the largest eigenvalues for the restricted Tensors,
\[
\Lambda_{\max} \left( \hat \Sigma_{TT} \right) = \sup_{\| \zb \| = 1} \frac 1n \sum_{i=1}^n (\xb_{i,T}' \zb)^2, \quad \Lambda_{\max} \left( \hat \Sigma^{(k)}_{T} \right) = \sup_{\| \zb \| = 1} \frac 1n \sum_{i=1}^n (\xb_{i,T}' \zb)^k
\]
then we assume that these are not divergent,
\[
\Lambda_{\max} \left( \hat \Sigma_{TT} \right) = O(1), \quad \Lambda_{\max} \left( \hat \Sigma_{T}^{(k)} \right) = O(1).
\]
And we further assume that the covariance is not singular (asymptotically).
\[
\Lambda_{\min} \left( \hat \Sigma_{TT} \right) = \inf_{\| \zb \| = 1} \frac 1n \sum_{i=1}^n (\xb_{i,T}' \zb)^2 > c_{2}
\]
for some constant $c_{2}$ and $n$ large enough.
% \item[(A2.3)] Let $t = |T|$.  Assume the following incoherence assumptions,
% \[
% \begin{aligned}
%   \max_{j \in T^C} \norm{\sum_{i=1}^n x_{i,j} \xb_{i,T}}{} = o_\PP(n)\\
%   \max_{j \in T^C} \Lambda_{\max} \left( \sum_{i=1}^n x_{i,j} \xb_{i,T} \right) = o_\PP\left(\frac{n\sqrt n}{\sqrt t}\right)\\
% \end{aligned}
% \]
\end{enumerate}

We are finally prepared to state the oracle properties of our variance estimator with known mean.
\begin{theorem}
\label{thm:known_mean}
Consider the non-convex program in stage 2, \eqref{eq:stage2}, with $\hat \betab = \betab^\star$ and assume (A1),(A2).
Assume that $t = |T|$ is subpolynomial in $\sqrt n$, $t = \tilde o(\sqrt n)$ and suppose that 
\begin{equation}
\label{eq:incoherence}
\max_{j \in [p]} \left\| \Xb_T \frac{\Xb_j}{\| \Xb_j \|}\right\| = \tilde O\left( n^{-1/4} \right), \quad \max_{i \in [n], j \in [p]} \left| \frac{x_{i,j}}{\| \Xb_j \|} \right| = O((\log p)^{-1/2}),
\end{equation}
that $\log p = \tilde O(1)$, 
\[
\overline \sigma = \max_{i \in [n]} \sigma_i = \tilde O(1), \textrm{ and } \underline \sigma = \min_{i \in [n]} \sigma_i = \tilde \Omega(1).
\] 
We require that the minimal signal size is
\[
\min_{j \in T} |\theta^\star_j| = \omega \left( \frac{\sqrt{\log p}}{\sqrt n} \right).
\]
Then for any sequence, $\lambda_T$, such that
\[
\lambda_T = o \left(\min_{j \in T} |\theta^\star_j| \right), \quad  \frac{\sqrt{\log p}}{\sqrt n} = o(\lambda_T)
\]
there is a local minimizer $\hat \thetab$ such that $\hat \thetab_{T^C} = \zero$ and it enjoys the following {\em oracle properties},
\begin{equation}
\label{eq:known_mean_oracle1}
%\begin{aligned}
\textrm{If } \ab \in \RR^p, \textrm{ such that } v = \lim_{n \rightarrow \infty} \ab' \hat \Sigma_{TT}^{-1} \ab \in \RR, \quad \textrm{then } \sqrt n \ab'(\hat \thetab - \thetab^\star) \overset{\Dcal}{\rightarrow} \Ncal(0,v).
%\end{aligned}
\end{equation}
\begin{equation}
\label{eq:known_mean_oracle2}
\max_{j \in [n]} n |\xb_j'(\hat \thetab - \thetab^\star)| = O_\PP \left( \max_{j \in [n]} \left[ \sqrt{n \xb_{j,T}' \hat \Sigma_{TT}^{-1} \xb_{j,T} \log p} + \max_{i \in [n]} \xb_{j,T} \hat \Sigma_{TT}^{-1} \xb_{i,T} \log p\right] \right).
\end{equation}
\begin{equation}
\label{eq:known_mean_oracle3}
\norm{\hat \thetab - \thetab^\star}{} = O_\PP \left( \sqrt{\frac tn} \right).
\end{equation}
\end{theorem}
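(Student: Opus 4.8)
The plan is to follow the two-part template for oracle properties of non-convex penalized likelihoods \citep{fan01variable, Fan2011Nonconcave}: construct the \emph{oracle estimator} supported on $T$, analyze its asymptotics, and then certify that it is a local minimizer of the full program \eqref{eq:stage2}. Since $\hat \betab = \betab^\star$, the residuals are $\hat\eta_i = \sigma_i \epsilon_i$, so that $\hat\eta_i^2 \exp(-\xb_i'\thetab^\star) = \epsilon_i^2$. Let $g(\thetab_T) = \sum_i \xb_{i,T}'\thetab_T + \sum_i \hat\eta_i^2 \exp(-\xb_{i,T}'\thetab_T)$ be the restricted smooth objective, let $\hat\thetab^{\mathrm{or}}_T$ minimize it, and set $\hat\thetab^{\mathrm{or}}_{T^C} = \zero$. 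Because $\hat\thetab^{\mathrm{or}}_T$ is consistent (established below) and $\lambda_T = o(\min_{j \in T}|\theta_j^\star|)$, every active coordinate eventually exceeds $b \lambda_j^T$, so by (P4) the penalty derivative vanishes on $T$; hence $\hat\thetab^{\mathrm{or}}$ coincides with the penalized restricted minimizer and the penalty contributes nothing to the asymptotics on $T$.

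Write $\Delta_T = \hat\thetab^{\mathrm{or}}_T - \thetab^\star_T$. The score and Hessian of $g$ are $U(\thetab_T) = \sum_i \xb_{i,T}(1 - \hat\eta_i^2 \exp(-\xb_{i,T}'\thetab_T))$ and $H(\thetab_T) = \sum_i \xb_{i,T}\xb_{i,T}'\, \hat\eta_i^2 \exp(-\xb_{i,T}'\thetab_T)$, so at the truth $U(\thetab^\star) = \sum_i \xb_{i,T}(1 - \epsilon_i^2)$ is a sum of independent mean-zero vectors with covariance $2\sum_i \xb_{i,T}\xb_{i,T}' = 2n\hat\Sigma_{TT}$ (using $\mathrm{Var}(1 - \epsilon_i^2) = 2$), while $E[H(\thetab^\star)] = n\hat\Sigma_{TT}$. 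I would first establish the Bahadur representation $\Delta_T = -(n\hat\Sigma_{TT})^{-1} U(\thetab^\star) + r_n$ with $\|r_n\|$ negligible, using (i) a Lyapunov CLT for $\ab_T'U(\thetab^\star)$ whose third-moment condition is supplied by $\sum_i \|\xb_{i,T}\|^3 = O(nt^{3/2})$ from (A1) and $\overline\sigma = \tilde O(1)$, $\underline\sigma = \tilde \Omega(1)$; (ii) uniform concentration of $H$ to $n\hat\Sigma_{TT}$ over an $\ell_2$-ball of radius $O(\sqrt{t/n})$, where the tensor bounds $\Lambda_{\max}(\hat\Sigma_T^{(k)}) = O(1)$ of (A2) control the moments of $\epsilon_i^2$ and the local variation of $H$ (via $\hat\Sigma_T^{(3)}$); and (iii) $\Lambda_{\min}(\hat\Sigma_{TT}) > c_2$ to invert the Hessian. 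Since $\|U(\thetab^\star)\|^2 = O_\PP(nt)$ and $\Lambda_{\min}(\hat\Sigma_{TT})$ is bounded below, this yields $\|\Delta_T\| = O_\PP(\sqrt{t/n})$, which is \eqref{eq:known_mean_oracle3}; applying the CLT to $\ab_T'U(\thetab^\star)$ with the sandwich covariance $(n\hat\Sigma_{TT})^{-1}(2n\hat\Sigma_{TT})(n\hat\Sigma_{TT})^{-1}$ gives the Gaussian limit \eqref{eq:known_mean_oracle1}. For \eqref{eq:known_mean_oracle2} I substitute the representation into $n\xb_j'(\hat\thetab - \thetab^\star) = n\xb_{j,T}'\Delta_T = -\sum_i w_{ji}(1 - \epsilon_i^2)$ up to a smaller-order remainder, with $w_{ji} = \xb_{j,T}'\hat\Sigma_{TT}^{-1}\xb_{i,T}$; as $1 - \epsilon_i^2$ is sub-exponential, Bernstein's inequality bounds this weighted sum by $\sqrt{\sum_i w_{ji}^2 \log p} = \sqrt{n\, \xb_{j,T}'\hat\Sigma_{TT}^{-1}\xb_{j,T} \log p}$ (the sub-Gaussian term, using $\sum_i w_{ji}^2 = n\, \xb_{j,T}'\hat\Sigma_{TT}^{-1}\xb_{j,T}$) plus $\max_i |w_{ji}| \log p$ (the sub-exponential term), and a union bound over $j \in [n]$ gives the right-hand side of \eqref{eq:known_mean_oracle2}.

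The crux is certifying that $\hat\thetab^{\mathrm{or}}$ is a local minimizer of \eqref{eq:stage2}. Let $g_{\mathrm{full}}(\thetab) = \sum_i \xb_i'\thetab + \sum_i \hat\eta_i^2\exp(-\xb_i'\thetab)$ denote the smooth part. On $T$ the stationarity equation is exactly $U(\hat\thetab^{\mathrm{or}}_T) = \zero$ (penalty flat), and since on the active set the penalty has vanishing second derivative while $H_{TT} \succ 0$ (from $\Lambda_{\min}(\hat\Sigma_{TT}) > c_2$), the requisite second-order condition holds. On $T^C$ I must verify the strict subgradient condition $|\partial_j g_{\mathrm{full}}(\hat\thetab^{\mathrm{or}})| < 4n\lambda_j^T = 4\lambda_T \|\Xb_j\|$ for all $j \in T^C$, where by (P3) the one-sided penalty derivative at $0$ equals $\lambda_j^T$. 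With $\partial_j g_{\mathrm{full}}(\hat\thetab^{\mathrm{or}}) = \sum_i x_{i,j}(1 - \epsilon_i^2 \exp(-\xb_{i,T}'\Delta_T))$, I split it as $\sum_i x_{i,j}(1 - \epsilon_i^2) + \sum_i x_{i,j}\epsilon_i^2(1 - \exp(-\xb_{i,T}'\Delta_T))$. The first piece is mean-zero with variance $2\|\Xb_j\|^2$, so by sub-exponential concentration and a union bound over the at most $p$ off-support coordinates (the entrywise bound $\max_{i,j}|x_{i,j}|/\|\Xb_j\| = O((\log p)^{-1/2})$ keeping the Bernstein correction in line) it is $O_\PP(\|\Xb_j\|\sqrt{\log p}) = o(\lambda_T\|\Xb_j\|)$ because $\sqrt{\log p}/\sqrt n = o(\lambda_T)$. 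For the second piece, $1 - \exp(-\xb_{i,T}'\Delta_T) = \xb_{i,T}'\Delta_T + O((\xb_{i,T}'\Delta_T)^2)$ produces a leading term $(\Xb_j'\Xb_T)\Delta_T$ (after replacing $\epsilon_i^2$ by its mean and bounding the fluctuation with (A2)), which by Cauchy--Schwarz and the incoherence bound $\|\Xb_T'\Xb_j\| = \|\Xb_j\|\tilde O(n^{-1/4})$ together with $\|\Delta_T\| = O_\PP(\sqrt{t/n})$ is $\|\Xb_j\|\, \tilde O(n^{-1/4})\, O_\PP(\sqrt{t/n})$; this is $o(\lambda_T\|\Xb_j\|)$ precisely because $t = \tilde o(\sqrt n)$ makes $\sqrt{t/n} = \tilde o(n^{-1/4})$.

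I expect the main obstacle to be this off-support certification, because the plug-in $\Delta_T$ is random and correlated with the same $\epsilon_i$ that drive the gradient, and it must be controlled uniformly over all $j \in T^C$. Two features make it delicate: the sub-exponential (rather than sub-Gaussian) tails of $1 - \epsilon_i^2$, which force Bernstein-type bounds and are responsible for the extra $\log p$ terms in \eqref{eq:known_mean_oracle2}; and the cross-correlation between an off-support column $\Xb_j$ and the support block $\Xb_T$, whose control is exactly the purpose of the incoherence condition \eqref{eq:incoherence} and the scaling $t = \tilde o(\sqrt n)$. A secondary technical point is making the Bahadur remainder $r_n$ uniform enough to reuse the same linearization for all three conclusions; this is where the higher tensors $\hat\Sigma_T^{(4)}, \hat\Sigma_T^{(6)}$ of (A2) enter, bounding the fourth and sixth moments of $\xb_{i,T}'\Delta_T$ weighted by $\epsilon_i^2$.
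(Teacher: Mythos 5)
Your plan follows the same route as the paper's proof: analyze the restricted MLE on $T$ through a quadratic (Bahadur) expansion whose remainder is controlled by concentration of the empirical derivative tensors, read the three oracle properties off the resulting linear representation with $\chi^2$/Bernstein tails, and then certify the zero-padded oracle estimator as a local minimizer of \eqref{eq:stage2} by checking the subgradient conditions, splitting the off-support gradient into a pure-noise piece, a linearized piece killed by \eqref{eq:incoherence} together with $t=\tilde o(\sqrt n)$, and a cross piece. (The paper implements the expansion via Lemma \ref{lem:pollard} combined with Lemmas \ref{lem:chi_squared} and \ref{lem:VDG_cover}; your Hessian-concentration phrasing is the same idea.) The genuine gap is the cross piece $\sum_i(\epsilon_i^2-1)\bigl(1-e^{-\xb_{i,T}'\Delta_T}\bigr)x_{i,j}$ (the paper's term $A_2$). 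You correctly flag it as the main obstacle, since $\Delta_T$ is built from the same $\epsilon_i$, but your proposed disposal, ``replacing $\epsilon_i^2$ by its mean and bounding the fluctuation with (A2),'' is not an argument: (A2) is a deterministic design condition and says nothing about this stochastic term, and Bernstein cannot be applied conditionally on $\Delta_T$ precisely because of the dependence. Without a further idea the only available bound is the crude one, $\max_i|1-e^{-\xb_{i,T}'\Delta_T}|\cdot\sum_i|\epsilon_i^2-1|\,|x_{i,j}|/\|\Xb_j\| = \tilde O_\PP(\sqrt{t/n})\cdot O_\PP(\sqrt n) = \tilde O_\PP(\sqrt t\,)$, which is polynomially too large (the normalized gradient must be at most logarithmic). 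The missing idea — the heart of the paper's certification — is uniform concentration over a deterministic neighborhood: fix $\Theta = \{\thetab : \|\thetab-\thetab^\star\|\le\delta\}$ with $\delta = O(\sqrt{t/n})$, which contains $\hat\thetab_T$ with high probability, cover it by $e^{O(t\log n)}$ balls (Lemma \ref{lem:VDG_cover}), apply Lemma \ref{lem:chi_squared} at every center with a union bound over centers and over $j\in T^C$, and control the oscillation within each ball by a Lipschitz estimate. Only this preserves the mean-zero cancellation uniformly over the possible locations of $\Delta_T$.

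Two smaller problems. First, for the pure-noise piece you claim $O_\PP(\|\Xb_j\|\sqrt{\log p}) = o(\lambda_T\|\Xb_j\|)$ ``because $\sqrt{\log p}/\sqrt n = o(\lambda_T)$''; after cancelling $\|\Xb_j\|$ on both sides this inference requires $\sqrt{\log p} = o(\lambda_T)$, which is stronger by a factor of $\sqrt n$. This is partly inherited from the paper: with the normalization $\lambda_j^T = \lambda_T\|\Xb_j\|/n$, the paper's own proof concludes this step ``for $\lambda_T = \omega(\sqrt{\log p})$,'' which is mutually inconsistent with the theorem's stated condition $\sqrt{\log p}/\sqrt n = o(\lambda_T)$ — but a correct write-up has to confront that mismatch rather than assert the domination. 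Second, your sandwich covariance $(n\hat \Sigma_{TT})^{-1}(2n\hat \Sigma_{TT})(n\hat \Sigma_{TT})^{-1} = 2n^{-1}\hat \Sigma_{TT}^{-1}$ yields $\sqrt n\,\ab'(\hat\thetab-\thetab^\star)\overset{\Dcal}{\rightarrow}\Ncal(0,2v)$, not the claimed $\Ncal(0,v)$ of \eqref{eq:known_mean_oracle1}: the factor $2 = {\rm Var}(\epsilon_i^2)$ does not cancel, so your stated conclusion contradicts your own computation (the paper elides the same factor by writing only ``the rest follows from the CLT'').
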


\begin{remark}
\eqref{eq:known_mean_oracle1} is the distribution achieved by the OMLE because $\ab' \hat \Sigma^{-1}_{TT} \ab$ is the oracle Fisher information for the parameter $\ab'\thetab^\star$.
The simultaneous estimation guarantee of $\xb_j' \thetab^\star$ (and hence of $\sigma(\xb_j,\thetab^\star)$) in \eqref{eq:known_mean_oracle2} is the result of the Bernstein-type inequality for Chi-squared random variables.
\eqref{eq:known_mean_oracle3} demonstrates the rate at which we expect the variance estimate to converge in $\ell_2$ norm.
\end{remark}

Let us begin with a discussion of the conditions in Theorem \ref{thm:known_mean}.
(A1), (A2), and the assumption $t = \tilde o(\sqrt n)$ are conditions required for the OMLE to attain convergence rates akin to those obtained by the central limit theorem in fixed intrinsic dimensions (fixed $T$), hence they would be necessary even in low dimensions.
A note should be made that we could precisely characterize the order of the logarithmic terms in the convergence, $t = \tilde o(\sqrt n)$, and other similar statements, but we choose not to for ease of presentation.
The condition $\log p = \tilde o(1)$ allows for $p = n^k$ and $p = n^{\log^k n}$ for any $k \ge 1$, hence it can accommodate significantly high dimensions.
The condition of \eqref{eq:incoherence} is an artifact of the fact that we are generally dealing with $\chi^2$ random variables in the variance estimation setting, and it seems to be necessary.
The assumption that $\overline \sigma, \underline \sigma^{-1} = \tilde O(1)$ is satisfied by the subGaussian design if $\| \thetab^\star \| = O(1)$, and due to the exponential form for the variance, \eqref{eq:var_form}, in most settings the variance is diverging either subpolynomially or exponentially.

We will refer to \oraclek~collectively as the oracle properties of $\hat \thetab$.  
These results state that under some regularity conditions, there is a local minimizer of \eqref{eq:stage2}, that achieves the low-dimensional rates of convergence.
The minimal signal size that is required has a similar behavior to the rates required by the Lasso for mean estimation under regularity conditions \cite{wainwright06sharp}.
While this result is interesting on its own, Theorem \ref{thm:known_mean} will provide a theoretical benchmark for the unknown mean case.

The difficulty of extending these to the case in which the true mean parameter, $\betab^\star$, is unknown and we only have an estimate is that it is possible that $\hat \betab$ is a function of the variances $\sigma(\xb_i,\thetab^\star)$.
Because we construct $\hat \thetab$ by fitting estimated residuals (constructed by removing the estimated mean from the observations $\hat \eta_i = y_i - \xb_i' \hat \betab$) according to \eqref{eq:stage2}, this correspondence between mean and variance can significantly alter the pseudo-likelihood and its local minima.
In the following section we address these concerns, and show that under mild conditions about the performance of stage 1, there is a local minimizer $\hat \thetab$ constructed with the estimate $\hat \betab$ that attains the same oracle properties as if $\betab^\star$ was known.

\subsection{Variance estimation with $\betab^\star$ unknown.}

We have discussed what is possible when the sample means, $\EE y_i = \xb_i' \betab^\star$, are known. 
Under mild conditions, there is a variance estimate that satisfies the oracle properties, \oraclek, that is also a local minimizer of \eqref{eq:stage2} with $\hat \betab = \betab^\star$.
Of course, in practice, we would have to estimate the mean parameter $\betab^\star$ without knowledge of the unknown variance parameters $\thetab^\star$.
We now show that under reasonable assumptions about the mean estimate $\hat \betab$, and an assumption that the largest sample variance is subpolynomial, we can obtain the oracle properties for a local minimizer of \eqref{eq:stage2} with no additional assumptions on the design.

\begin{theorem}
\label{thm:unknown_mean}
Consider the non-convex program in stage 2, \eqref{eq:stage2}, with $\hat \betab$ satisfying
\begin{equation}
%  \tag{\bf A3}
  \label{eq:hat_beta}
    \|\hat \betab\|_0 \le \tilde o_\PP (\sqrt n), \quad \| \Xb (\betab^\star - \hat \betab) \|^2 = \tilde o_\PP(\sqrt n)
\end{equation}
and assume the conditions of Theorem \ref{thm:known_mean} ((A1),(A2), \eqref{eq:incoherence}, $t = \tilde o(\sqrt n)$, and $\overline \sigma, \underline \sigma^{-1}, \log p = \tilde O(1)$).
Similarly, assume that  
\[
\min_{j \in T} |\theta^\star_j| = \omega \left( \frac{\sqrt{\log p}}{\sqrt n} \right).
\]
Then for any sequence, $\lambda_T$, such that
\[
\lambda_T = o \left(\min_{j \in T} |\theta^\star_j| \right), \quad  \frac{\sqrt{\log p}}{\sqrt n} = o(\lambda_T )
\]
there is a local minimizer $\hat \thetab$ such that $\hat \thetab_{T^C} = \zero$ and it enjoys the oracle properties \oraclek.
\end{theorem}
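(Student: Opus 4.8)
The plan is to treat Theorem~\ref{thm:known_mean} as a benchmark and to show that replacing the true residuals by the estimated ones perturbs the stage-2 objective \eqref{eq:stage2} so little that the oracle-property local minimizer constructed there survives. Write $\eta_i = y_i - \xb_i'\betab^\star = \sigma_i \epsilon_i$ for the true error and $r_i = \xb_i'(\betab^\star - \hat\betab)$ for the stage-1 prediction error at sample $i$, so that the estimated residual is $\hat\eta_i = \eta_i + r_i$ and
\[
\hat\eta_i^2 - \eta_i^2 = 2\eta_i r_i + r_i^2 .
\]
Thus the objective in \eqref{eq:stage2} equals the known-mean objective analyzed in Theorem~\ref{thm:known_mean} plus the perturbation $P(\thetab) = \sum_i (2\eta_i r_i + r_i^2)\exp(-\xb_i'\thetab)$. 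Everything reduces to showing that $P$ and its first two derivatives are negligible, in the appropriate scaling, on a shrinking neighborhood of $\thetab^\star$ restricted to $T$, while the off-support coordinates remain inactive.

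I would first reconstruct the three ingredients behind Theorem~\ref{thm:known_mean}, now carrying $\hat\eta_i^2$ in place of $\eta_i^2$: (i) the problem restricted to $T$ is still convex, since $\hat\eta_i^2 \ge 0$ makes $\thetab \mapsto \sum_i \xb_i'\thetab + \sum_i \hat\eta_i^2 \exp(-\xb_i'\thetab)$ convex, and by $\Lambda_{\min}(\hat\Sigma_{TT}) > c_2$ from (A2) it is locally strongly convex, so a restricted minimizer $\hat\thetab_T$ exists and, by (P4) with the signal-size condition, incurs no penalty bias on $T$; (ii) a Taylor expansion of the restricted score gives the asymptotically linear representation $\sqrt n(\hat\thetab_T - \thetab^\star_T) = \hat\Sigma_{TT}^{-1}\, n^{-1/2}\sum_i \xb_{i,T}(1-\epsilon_i^2) + (\text{remainder})$; and (iii) the off-support check $|[\nabla\ell(\hat\thetab)]_j| < 4 n\, \pen'_{\lambda_j^T}(0^{+}) = 4n\lambda_j^T$ for every $j \in T^C$, certifying that $\hat\thetab_{T^C} = \zero$ is a genuine local minimizer. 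Granting Theorem~\ref{thm:known_mean}, each of these holds for the known-mean objective, so the task is to bound the extra contributions from $P$: the score discrepancy $\nabla_{\theta_T} P = -\sum_i \xb_{i,T}(2\eta_i r_i + r_i^2)\exp(-\xb_i'\thetab)$, its off-support analogue, and the Hessian discrepancy $\sum_i \xb_{i,T}\xb_{i,T}'(2\eta_i r_i + r_i^2)\exp(-\xb_i'\thetab)$, uniformly over the localization neighborhood.

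The two parts of \eqref{eq:hat_beta} are tailored to exactly these terms. For the cross term $\sum_i \xb_{i,T}\eta_i r_i \exp(-\xb_i'\thetab)$ I would condition on the high-probability stage-1 event on which \eqref{eq:hat_beta} holds and on the realized support $U = \supp(\betab^\star - \hat\betab)$ with $|U| = \tilde o_\PP(\sqrt n)$; writing $r_i = \xb_{i,U}'(\betab^\star-\hat\betab)_U$, this term is linear in the Gaussian $\epsilon$, so a conditional Gaussian deviation bound controls it by $\|\Xb(\betab^\star - \hat\betab)\|$ up to the design factors in (A1)--(A2) and $\underline\sigma^{-1}$, hence by $\tilde o_\PP(\sqrt n)$. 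For the quadratic term $\sum_i \xb_{i,T} r_i^2 \exp(-\xb_i'\thetab)$ I would combine the prediction-error bound $\|\Xb(\betab^\star-\hat\betab)\|^2 = \tilde o_\PP(\sqrt n)$ with the sup-norm control $\max_i |r_i| = \tilde o_\PP(1)$, which follows from the sparsity $\|\hat\betab\|_0 = \tilde o_\PP(\sqrt n)$ together with the coordinate-boundedness and incoherence in \eqref{eq:incoherence}; this converts $\sum_i \|\xb_{i,T}\| r_i^2$ into $\max_i|r_i|$ times a prediction-error factor and keeps it of smaller order than the benchmark score scale. The Hessian discrepancy is handled by the same two bounds with (A2). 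Once the score and Hessian of $P$ are shown to be $\tilde o_\PP$ of the benchmark $\sqrt n$ and $n$ scales, the expansion (ii) acquires only a $o_\PP(1)$ correction after multiplication by $\hat\Sigma_{TT}^{-1} n^{-1/2}$, so \eqref{eq:known_mean_oracle1} and \eqref{eq:known_mean_oracle3} transfer; the off-support inequalities in (iii) are re-verified column-wise by the same device, with \eqref{eq:incoherence} doing the essential work of keeping both the benchmark score and its $P$-perturbation below the penalty threshold uniformly over $T^C$, yielding \eqref{eq:known_mean_oracle2}.

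The main obstacle is the interplay between the data-dependence of $\hat\betab$ and the quadratic term. Since $\hat\betab$ is a function of $\yb$, the perturbations $r_i$ are \emph{not} independent of the noise $\epsilon_i$, so the cross term cannot be treated as a plain Gaussian sum; and since only the prediction error and the sparsity level are assumed, the quadratic contribution is a priori only of order $\max_i\|\xb_{i,T}\| \cdot \|\Xb(\betab^\star-\hat\betab)\|^2$, which is not manifestly negligible. I would resolve both by a peeling (union-bound) argument over all candidate supports $U$ with $|U| \le \|\hat\betab\|_0 = \tilde o_\PP(\sqrt n)$: because $\log p = \tilde O(1)$, the entropy $\log\binom{p}{|U|}$ is sub-polynomial, so a uniform-over-$U$ Gaussian deviation bound for the cross term and a uniform $\max_i|r_i| = \tilde o_\PP(1)$ bound for the quadratic term both survive, effectively decoupling $r_i$ from $\epsilon_i$ at the cost of only logarithmic factors. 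Establishing this uniform control, and checking that the resulting error does not shrink the minimal-signal and tuning-parameter windows inherited from Theorem~\ref{thm:known_mean}, is where the real work lies.
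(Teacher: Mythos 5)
Your skeleton (decompose $\hat\eta_i^2-\eta_i^2=2\eta_i r_i+r_i^2$, then decouple the data-dependence of $\hat\betab$ from $\epsilonb$ by a union/covering argument over sparse candidates) is the same as the paper's, but the two quantitative claims you rest it on fail, and they fail exactly where the difficulty of this theorem lives. First, $\max_i|r_i|=\tilde o_\PP(1)$ does \emph{not} follow from \eqref{eq:hat_beta} together with \eqref{eq:incoherence}: the hypotheses only give $\sum_i r_i^2=\tilde o_\PP(\sqrt n)$, hence $\max_i|r_i|=\tilde o_\PP(n^{1/4})$, and no restricted-eigenvalue condition is available inside Theorem \ref{thm:unknown_mean} (it enters only via (A3) in Corollary \ref{cor:stage2_belloni}) to convert prediction error into a coefficient bound; the incoherence condition only says $|x_{i,j}|/\|\Xb_j\|=O((\log p)^{-1/2})$, i.e.\ entries may be of order $\sqrt{n/\log p}$, which gives nothing here. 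Second, your target --- ``score and Hessian of $P$ are $\tilde o_\PP$ of the benchmark $\sqrt n$ and $n$ scales'' --- is unattainable: the quadratic part of the score, $\sum_i \xb_{i,T}r_i^2 e^{-\xb_i'\thetab}$, is in the worst case of order $\max_i\|\xb_{i,T}\|\cdot\sum_i r_i^2 \asymp \sqrt{t\log p}\;\tilde o_\PP(\sqrt n)$ (put all the prediction error on one sample $i_0$ with $\|\xb_{i_0,T}\|\asymp\sqrt{t\log p}$), which approaches $n^{3/4}$ when $t$ is near $\sqrt n$. A perturbed-estimating-equation argument of the form $H^{-1}\nabla P$, with $H\succeq c_2 n$, then only yields a transfer error of order $n^{-1/4}$, far short of the $o_\PP(1/\sqrt n)$ accuracy in a fixed direction $\ab$ that \eqref{eq:known_mean_oracle1} requires; the cross term has the same defect (its bound after decoupling carries an extra $\sqrt t$). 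Also, your entropy claim is wrong: $\log\binom{p}{|U|}\asymp|U|\log p$ is $\tilde o(\sqrt n)$, i.e.\ polynomially large, not sub-polynomial, so the union bound costs a factor of roughly $n^{1/4}$, not a logarithm.

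The paper's proof supplies precisely the two devices your plan is missing, and this is why it never compares scores at all. (i) It recenters the pseudo-likelihood by constants (the augmented $\hat\ell'$), so the perturbation terms carry the factors $\bigl(e^{-\xb_i'\thetab}-e^{-\xb_i'\thetab^\star}\bigr)$ and $\bigl(e^{\frac12\xb_i'(\thetab^\star-\thetab)}-1\bigr)$, which vanish at $\thetab=\thetab^\star$. (ii) It compares \emph{objectives}, and only on the region $\Theta_C$ where $\|\Xb(\thetab-\thetab^\star)\|_\infty\le C\sqrt{(t/n)\log n}$ holds in addition to the $\ell_2$ constraint --- a region the oracle estimator is known to inhabit thanks to \eqref{eq:known_mean_oracle2}, which is the real reason that property is proved first. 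On that region the factors in (i) are uniformly $\tilde O(\sqrt{t\log n/n})$, which is what converts $\sum_i r_i^2=\tilde o_\PP(\sqrt n)$ into the $o_\PP(t)$ objective perturbation of Lemma \ref{lem:MPLE_diff}; your worst-case direction $\xb_{i_0,T}/\|\xb_{i_0,T}\|$ is neutralized because movement along it inside $\Theta_C$ is capped at $\sqrt{\log n/n}$ rather than $\sqrt{t/n}$. Pollard's convexity lemma (Lemma \ref{lem:pollard}), applied in a norm containing this $\ell_\infty$ component, then transfers the oracle properties from the known-mean estimator to the pseudo-likelihood minimizer, and Lemma \ref{lem:PMPLE_term2} separately re-verifies the off-support subgradient conditions (where, unlike in your score bound on $T$, no $\max_i\|\xb_{i,T}\|$ factor appears, so the crude bound suffices there). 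Without the recentering and the $\|\Xb(\thetab-\thetab^\star)\|_\infty$ localization, your bounds lose a polynomial factor ($\sqrt t$, or $n^{1/4}$) at a point in the argument where there is no slack to absorb it.
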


Notice that the conditions placed on $\hat \betab$, \eqref{eq:hat_beta}, are only that the solution is sparse and has a reasonable prediction error.
This mild assumption about the performance of stage 1 allows for there to be a substantial correspondence between the variance and mean parameters ($\thetab^\star$ and $\betab^\star$).
While it may be guessed that the sharing of relevant covariants ($T \cap S \ne \emptyset$ where $S = \supp(\betab^\star)$), not to mention covariate correlations, would be problematic for the pseudo-likelihood minimizer to recover the support of $\thetab^\star$, no such effect is observed.
Only the effect of heteroscedasticity on the ability for $\hat \betab$ to satisfy \eqref{eq:hat_beta} are these concerns manifested.
These results are obtained by considering the fact that we are minimizing a pseudolikelihood for $\thetab$ by plugging in the estimate $\hat \betab$ and showing that the pseudolikelihood is close enough to the likelihood.

In order to ensure that the stage 1 solution satisfies \eqref{eq:hat_beta}, we must impose the restricted eigenvalue condition.
This is a common assumption in the Lasso literature \cite{bickel2009simultaneous}, and is satisfied by subGaussian design \cite{rudelson2011reconstruction}.
We also require that the optimal penalty loadings (to be defined below) in stage 1 are not too large.
\begin{enumerate}
\item[\bf (A3)] Consider the restricted set
\[
\Delta_{C,T} = \{ \delta \in \RR^p : \| \deltab_{T^C} \|_1 \le C \| \deltab_T \|_1, \deltab \ne \zero \}
\]
then the restricted eigenvalue of $\hat \Sigma$ is 
\[
\kappa^2_C(\hat \Sigma) = \min_{\deltab \in \Delta_{C,T}, |T| \le t} \frac{\deltab' \hat \Sigma \deltab}{\| \deltab_T \|_1^2 }
\]
We then assume that the restricted eigenvalue for any $C > 0$ is lower bounded, specifically there exists a constant $\kappa$ such that
\[
\PP \{ \kappa_C(\hat \Sigma) \ge \kappa \} \rightarrow 1
\]
\item[\bf (A4)] The optimal penalty loadings must be not divergent in probability,
\[
\max_{j \in [p]} \frac 1n \sum_{i =1}^n x_{i,j}^2 \sigma_i^2 = O_\PP(1) \textrm{ and }
\min_{j \in [p]} \frac 1n \sum_{i =1}^n x_{i,j}^2 \sigma_i^2 = \Omega_\PP(1)
\]
\end{enumerate}
We now combine this result with results from \cite{Belloni2012Sparse} to show that by using the Lasso solution for stage 1, one obtains the oracle properties in stage 2.

\begin{corollary}
  \label{cor:stage2_belloni}
Consider using algorithm A.1 in \cite{Belloni2012Sparse} as the stage 1 mean estimator.
Assume the conditions of Theorem \ref{thm:known_mean} and that $s = \tilde o(\sqrt n)$, then the stage 2 estimate enjoys the oracle properties \oraclek.
\end{corollary}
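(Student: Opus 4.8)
The plan is to derive the corollary as a direct consequence of Theorem~\ref{thm:unknown_mean}: once the stage~1 estimator $\hat\betab$ produced by Algorithm~A.1 of \cite{Belloni2012Sparse} is shown to satisfy the two requirements in \eqref{eq:hat_beta}, the oracle properties \oraclek follow immediately. Thus the entire argument reduces to establishing a sparsity bound and a prediction-error bound for the heteroscedasticity-adjusted Lasso, and then translating these into the $\tilde o_\PP(\sqrt n)$ form demanded by \eqref{eq:hat_beta}.

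First I would verify that the design and noise assumptions in force --- (A1), (A2), the incoherence condition \eqref{eq:incoherence}, and in particular $\overline\sigma,\underline\sigma^{-1},\log p = \tilde O(1)$ --- together with the restricted eigenvalue condition (A3) and the penalty-loading condition (A4), place us inside the regime covered by the consistency results of \cite{Belloni2012Sparse} for Algorithm~A.1. The role of (A4) is precisely to control the random, self-tuned penalty loadings $\sqrt{n^{-1}\sum_i x_{i,j}^2\sigma_i^2}$ appearing in their adjusted penalty, guaranteeing they are bounded away from $0$ and $\infty$ in probability; (A3) supplies the restricted eigenvalue lower bound that their prediction-error and sparsity bounds require. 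Having checked these, I would invoke their main estimation result to obtain, with probability tending to one,
\[
\|\hat\betab\|_0 = O_\PP(s), \qquad \|\Xb(\betab^\star - \hat\betab)\|^2 = O_\PP(s \log p),
\]
where $s = |\supp(\betab^\star)|$.

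It then remains to convert these rates. Since $s = \tilde o(\sqrt n)$ by hypothesis, the sparsity bound gives $\|\hat\betab\|_0 = O_\PP(s) = \tilde o_\PP(\sqrt n)$, which is the first half of \eqref{eq:hat_beta}. For the second half I would use that $s=\tilde o(\sqrt n)$ exhibits polynomial decay relative to $\sqrt n$, i.e. $s = O(n^{1/2-\gamma})$ for some $\gamma>0$, while $\log p = \tilde O(1)$ is sub-polynomial, so $s\log p = O(n^{1/2-\gamma})\cdot o(n^{\gamma/2}) = o(n^{1/2-\gamma/2})$, whence $s\log p = \tilde o(\sqrt n)$. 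Consequently $\|\Xb(\betab^\star - \hat\betab)\|^2 = O_\PP(s\log p) = \tilde o_\PP(\sqrt n)$, establishing \eqref{eq:hat_beta} in full. Applying Theorem~\ref{thm:unknown_mean} then yields the oracle properties \oraclek and completes the proof.

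The main obstacle I anticipate is not the rate arithmetic, which is routine, but the careful matching of hypotheses in the first step: one must check that the moment and design conditions assumed here, and the growth relation between $s$, $\log p$ and $n$, are strong enough for the self-normalized penalty choice of \cite{Belloni2012Sparse} --- whose loadings (A4) controls --- to deliver the claimed $O_\PP(s)$ sparsity and $O_\PP(s\log p)$ prediction error \emph{under heteroscedasticity}, rather than merely under homoscedastic sub-Gaussian noise. Once that translation of assumptions is carried out cleanly, the corollary is immediate.
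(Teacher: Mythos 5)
Your overall strategy coincides with the paper's: both arguments reduce the corollary to Theorem~\ref{thm:unknown_mean} by showing that the stage-1 estimator of \cite{Belloni2012Sparse} satisfies \eqref{eq:hat_beta}, and your rate arithmetic ($s = \tilde o(\sqrt n)$ together with $\log p = \tilde O(1)$ gives $s\log p = \tilde o(\sqrt n)$, hence $\|\Xb(\betab^\star-\hat\betab)\|^2 = O_\PP(s\log p) = \tilde o_\PP(\sqrt n)$ and $\|\hat\betab\|_0 = O_\PP(s) = \tilde o_\PP(\sqrt n)$) is correct; it fills in a conversion the paper leaves implicit.

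However, the step you explicitly defer --- the ``careful matching of hypotheses'' so that Belloni's guarantees apply under heteroscedasticity --- is not a peripheral check: it is the \emph{entire} content of the paper's proof, and your proposal never carries it out. Concretely, \cite{Belloni2012Sparse} requires their regularity condition RF together with control of the ratio of the feasible penalty loadings, and (A3)/(A4) alone do not hand you these. The feasible loadings are $\Upsilon_j = n^{-1}\sum_{i} x_{i,j}^2\epsilon_i^2\sigma_i^2$, which involve the random quantities $\epsilon_i^2$; assumption (A4) only controls the ideal quantities $n^{-1}\sum_i x_{i,j}^2\sigma_i^2$. The paper bridges this with a $\chi^2$-concentration argument, uniform over $j \in [p]$ (using $\log p = \tilde O(1)$), showing $\max_j|\Upsilon_j| = O_\PP\bigl(n^{-1}\sum_i x_{i,j}^2\sigma_i^2\bigr)$ and $\min_j|\Upsilon_j| = \Omega_\PP\bigl(n^{-1}\sum_i x_{i,j}^2\sigma_i^2\bigr)$, so that the loading ratio is $O_\PP(1)$; it then checks RF(i) from this bound, RF(ii) from the symmetry $\EE[x_{i,j}^3\eta_i^3]=0$, RF(iii) from $\log p = \tilde o(n)$, and RF(iv) from Lemma 3 of \cite{Belloni2012Sparse}. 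Without these verifications, your invocation of ``their main estimation result'' is exactly the unjustified step you yourself flag as the main obstacle. So the proposal has the right architecture and correct peripheral arithmetic, but a genuine gap at its load-bearing step.
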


\subsection{Mean estimation with weighted least squares in stage 3.}

We have shown that we can obtain accurate variance parameter estimates by solving the penalized pseudolikelihood program in stage 2 under mild conditions on the initial estimate of $\betab^\star$.
With the guarantees of Theorem \ref{thm:unknown_mean}, we will show that the reweighted penalized least squares estimate in stage 3 performs as well as if we had access to the true variances.
To be precise, in the oracle setting, where we have full knowledge of $S = \supp(\betab^\star)$, if in addition we had access to the true variances, then the best linear unbiased estimator (which is also the oracle MLE) would be Gaussian with covariance matrix, $(\Xb_S' \diag(\sigmab^{-2}) \Xb_S)^{-1}$ (the inverse Fisher information).
It is then reasonable to assume that this matrix is well conditioned, if we have any hope of recovering the parameter $\betab^\star$ without prior knowledge of $S$ or $\thetab$.
The following theorem demonstrates that with this mild assumption, under the conditions of Theorem \ref{thm:unknown_mean} and Corollary \ref{cor:stage2_belloni}, we obtain that the stage 3 estimator inherits the asymptotic normality of the oracle MLE just described.
\begin{theorem}
  \label{thm:wls_mean}
Consider the non-convex program in stage 3, \eqref{eq:stage3}, and assume the conditions of Theorem \ref{thm:unknown_mean}.
Denote $\Db = \frac 1n \Xb' \diag(\sigmab^{-2}) \Xb$ and assume that
\[
\Lambda_{\max}(\Db_{SS}) = O(1) \textrm{ and } \Lambda_{\max} (\Db_{SS}^{-1}) = O(1).
\]
Assume that $s = |S|$ is subpolynomial in $\sqrt n$, $s = \tilde o(\sqrt n)$ and suppose that 
\[
\min_{j \in S} |\beta^\star_j| = \omega \left( \frac{\sqrt{\log p}}{\sqrt n} \right).
\]
Then for any sequence, $\lambda_S$, such that
\[
\lambda_S = o \left(\min_{j \in S} |\beta^\star_j| \right), \quad  \frac{\sqrt{\log p}}{n} = o(\lambda_S )
\]
there is a local minimizer $\hat \betab$ such that $\hat \betab_{S^C} = \zero$ and it enjoys the following,
\begin{equation}
\label{eq:wls_oracle1}
%\begin{aligned}
\textrm{If } \ab \in \RR^p, \textrm{ such that } v = \lim_{n \rightarrow \infty} \ab' \hat \Db_{SS}^{-1} \ab \in \RR, \quad \textrm{then } \sqrt n \ab'(\hat \betab - \betab^\star) \overset{\Dcal}{\rightarrow} \Ncal(0,v).
%\end{aligned}
\end{equation}
\end{theorem}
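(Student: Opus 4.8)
The plan is to reduce Theorem \ref{thm:wls_mean} to the analysis of a single \emph{oracle weighted least squares estimator}, exactly as the proofs of Theorems \ref{thm:known_mean} and \ref{thm:unknown_mean} are organized around their oracle estimators. Let $\hat\thetab$ be the stage 2 minimizer guaranteed by Theorem \ref{thm:unknown_mean}, with plug-in weights $\hat\sigma_i^2 = \exp(\xb_i'\hat\thetab)$, and define $\hat\betab^{or}$ to be the minimizer of the smooth part of \eqref{eq:stage3} restricted to $S = \supp(\betab^\star)$, i.e.\ $\hat\betab^{or}_{S^C} = \zero$ and $\hat\betab^{or}_S = (\Xb_S'\diag(\hat\sigmab^{-2})\Xb_S)^{-1}\Xb_S'\diag(\hat\sigmab^{-2})\yb$. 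The theorem then follows from two claims: (i) with probability tending to one, $\hat\betab^{or}$ is a local minimizer of the full non-convex program \eqref{eq:stage3} with $\hat\betab^{or}_{S^C}=\zero$; and (ii) $\sqrt n\,\ab'(\hat\betab^{or}-\betab^\star)\overset{\Dcal}{\rightarrow}\Ncal(0,v)$. Claim (i) identifies the asserted local minimizer with the oracle, and claim (ii) supplies its limiting law.

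For claim (i), I would verify the KKT conditions of \eqref{eq:stage3}. On the support, the minimal signal condition $\min_{j\in S}|\beta^\star_j|=\omega(\sqrt{\log p}/\sqrt n)$ together with $\lambda_S = o(\min_{j\in S}|\beta^\star_j|)$ and the estimation rate of $\hat\betab^{or}_S$ force every nonzero coordinate of $\hat\betab^{or}$ eventually into the flat region where property (P4) gives $\pen'_{\lambda^S_j}(|\hat\beta^{or}_j|)=0$; since $\hat\betab^{or}_S$ solves the restricted weighted normal equations, the stationarity conditions on $S$ then hold exactly. Off the support I must check $|\,2\sum_i x_{i,j}(y_i-\xb_i'\hat\betab^{or})/\hat\sigma_i^2\,|\le 2n\lambda^S_j$ uniformly over $j\in S^C$. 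Writing the residual as $y_i-\xb_i'\hat\betab^{or}=\sigma_i\epsilon_i - \xb_{i,S}'(\hat\betab^{or}_S-\betab^\star_S)$ splits this quantity into a (conditionally) Gaussian term and a bias term; a maximal inequality over the $p$ coordinates, controlled by the penalty loadings of (A4) and the cross-incoherence in \eqref{eq:incoherence}, together with $s=\tilde o(\sqrt n)$, bounds both by $o_\PP(n\lambda^S_j)$ using the lower bound $\sqrt{\log p}/n = o(\lambda_S)$. Local minimality (not merely stationarity) is automatic: in the flat region selected by the signal strength the penalty contributes no curvature, while the restricted smooth Hessian $2\Xb_S'\diag(\hat\sigmab^{-2})\Xb_S$ is positive definite with smallest eigenvalue of order $n$ by the assumed conditioning of $\Db_{SS}$ and $\max_i|\hat\sigma_i^2/\sigma_i^2-1|=o_\PP(1)$.

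For claim (ii), I would first treat the \emph{true-weight} oracle $\hat\betab^{or,\sigma}_S = \betab^\star_S + (\Xb_S'\diag(\sigmab^{-2})\Xb_S)^{-1}\Xb_S'\diag(\sigmab^{-1})\epsilonb$, which is an exact linear functional of $\epsilonb$ (whose coordinates are i.i.d.\ standard normal) and hence exactly Gaussian, with $\sqrt n\,\ab'(\hat\betab^{or,\sigma}-\betab^\star)\sim\Ncal(0,\ab_S'\Db_{SS}^{-1}\ab_S)$; since the plug-in weights are uniformly close to the truth, the reported variance $\ab'\hat\Db_{SS}^{-1}\ab$ shares the same limit $v$. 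It then remains to show $\sqrt n\,\ab'(\hat\betab^{or}-\hat\betab^{or,\sigma})=o_\PP(1)$. Here I would use $\max_i|\xb_i'(\hat\thetab-\thetab^\star)|=o_\PP(1)$, which follows from \eqref{eq:known_mean_oracle2} of Theorem \ref{thm:unknown_mean} and yields $\max_i|\hat\sigma_i^{-2}/\sigma_i^{-2}-1|=o_\PP(1)$, combined with a first-order matrix perturbation expansion of $(\Xb_S'\diag(\hat\sigmab^{-2})\Xb_S)^{-1}$ about its true-weight counterpart, bounding the operator-norm and cross-term errors by the weight discrepancy times the $O_\PP(1/\sqrt n)$ size of the main term. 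Slutsky's theorem then delivers \eqref{eq:wls_oracle1}.

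The main obstacle is the dependence between the plug-in weights $\hat\sigmab$ and the noise $\epsilonb$: since $\hat\thetab$ is built from the residuals $\hat\etab=\yb-\Xb\hat\betab$, the weights are not independent of $\epsilonb$, so one cannot condition on $\hat\sigmab$ and run a naive Gaussian calculation. I would circumvent this by making the perturbation bound in claim (ii) \emph{deterministic} on the high-probability event $\{\max_i|\xb_i'(\hat\thetab-\thetab^\star)|\le\delta_n\}$ with $\delta_n=o(1)$, so that the negligibility of $\hat\betab^{or}-\hat\betab^{or,\sigma}$ holds pathwise there and the dependence never enters; the same event simultaneously controls the off-support KKT bound of claim (i). The delicate accounting is verifying that $\delta_n=o_\PP(1)$ is small enough to survive the $\sqrt n$ scaling while keeping all $\Xb_S$ operator-norm perturbations under control with $s=\tilde o(\sqrt n)$, so that both the bias in the off-support gradient and the weight-induced shift of the oracle estimator remain asymptotically negligible.
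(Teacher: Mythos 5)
Your overall skeleton matches the paper's: the paper likewise defines the restricted plug-in-weight estimator $\hat\betab_S(\hat\thetab)=(\Xb_S'\hat\Wb^2\Xb_S)^{-1}\Xb_S'\hat\Wb^2\yb$, compares it to the true-weight oracle $\hat\betab_S(\thetab^\star)$ (which is exactly Gaussian with variance $\tfrac1n(\Db_{SS}^\star)^{-1}$), and verifies the zero-subgradient conditions on and off the support exactly as in your claim (i). The gap is in your claim (ii), at the very step you flag as ``the main obstacle.'' Your proposed fix --- making the perturbation bound \emph{deterministic} on the event $\{\max_i|\xb_i'(\hat\thetab-\thetab^\star)|\le\delta_n\}$ so that the weight--noise dependence ``never enters'' --- is quantitatively insufficient. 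Any pathwise bound that uses only the magnitude $\delta_n$ of the weight error must hold for the worst alignment of the diagonal perturbation $\Eb=\diag(\sigma_i^2/\hat\sigma_i^2-1)$ with $\epsilonb$, and Cauchy--Schwarz then gives, for the critical cross term, $|\ab'(\Db_{SS}^\star)^{-1}\tfrac1n\Xb_S'\Wb^\star\Eb\,\epsilonb|\le \|\Eb\|_{\rm op}\,\|\Wb^\star\Xb_S(\Xb_S'\Wb^{\star2}\Xb_S)^{-1}\ab\|\,\|\epsilonb\| = O_\PP(\delta_n)$, since $\|\epsilonb\|\asymp\sqrt n$ exactly cancels the $n^{-1/2}$ coming from the design factor. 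But stage 2 only guarantees $\delta_n=\tilde O_\PP(\sqrt{t/n})$ (from \eqref{eq:known_mean_oracle2}), so the pathwise bound is $\tilde O_\PP(\sqrt{t/n})$, which misses the required $o_\PP(n^{-1/2})$ by a factor $\sqrt t$ (and by logarithmic factors even when $t=O(1)$). The CLT at scale $\sqrt n$ therefore does not follow from your argument.

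This is precisely why the paper's proof of Lemma \ref{lem:pseudo_beta_error} cannot avoid exploiting the dependence structure: it writes $\hat\betab_j(\thetab)-\hat\betab_j(\thetab^\star)=(\Lb_j(\thetab)-\Lb_j(\thetab^\star))'\epsilonb$ with $\Lb_j(\thetab)=((\Xb_S'\Wb(\thetab)^2\Xb_S)^{-1}\Xb_S'\Wb(\thetab)^2\Wb^{\star-1})'\eb_j$, proves a gradient bound $\|\partial\Lb_j/\partial\gammab\,(\hat\gammab-\gammab^\star)\|=\tilde O_\PP(\sqrt t/n)$ (Lemma \ref{lem:beta_theta_grad}), and then runs a covering argument over the \emph{low-complexity} set $\Theta\subset\RR^T$ in which $\hat\thetab$ lives (Lemma \ref{lem:lipschitz_cover}, entropy $O(t\log n)$): at each fixed cover point the functional is genuinely Gaussian with standard deviation $\tilde O(\sqrt t/n)$, so the union bound over $p|\Kcal|$ points costs only $\sqrt{t\log n+\log p}$ rather than the factor $\|\epsilonb\|\asymp\sqrt n$, yielding $\|\hat\betab_S(\hat\thetab)-\hat\betab_S(\thetab^\star)\|_\infty=\tilde O_\PP(t/n)=\tilde o_\PP(n^{-1/2})$ since $t=\tilde o(\sqrt n)$. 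In short: the dependence cannot be neutralized by conditioning on a high-probability event; it must be handled by a uniform-in-$\thetab$ chaining argument that trades the worst-case noise norm for the metric entropy of the stage-2 parameter set. Your proposal is missing this idea, and without it the reduction from $\hat\betab^{or}$ to $\hat\betab^{or,\sigma}$ fails.
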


Theorem \ref{thm:wls_mean} states that we can achieve the same marginal asymptotic normality property as the oracle MLE.
In fact, in the appendix a stronger statement is proven, specifically that the difference between the oracle MLE and a local minimizer of \eqref{eq:stage3} is of smaller order than the asymptotic variance of oracle MLE.
It should be mentioned that \cite{Belloni2012Sparse} demonstrates that optimal rates can be achieved using the Lasso with appropriately selected penalty.
Theorem \ref{thm:wls_mean} improves on this result by attaining the optimal asymptotic variance for the estimated mean parameter.
This convergence can be inverted to obtain a confidence set, which will be valid under our assumptions.
The significance of Theorem \ref{thm:wls_mean} is that with just the three stages of HIPPO, through the pseudolikelihood approach, we can make a guarantee commensurate with what we would achieve had we known the variances $\sigma_i$.
This complements Theorem \ref{thm:unknown_mean}, and together they provide us with strong guarantees regarding the model selection consistency of both the mean and the variance parameters.

\section{Monte-Carlo Simulations}
\label{sec:simulation}

% \begin{table}[t]
% {
% \hfill{}
% \begin{tabular}{l@{\hspace{0.3cm}}ccc}
%   &  $\norm{\theta - \hat\theta}_2$  &
%      ${\sf Pre}_{\theta}$ & ${\sf Rec}_{\theta}$ \\
% \cline{2-4}
% \vspace{-0.2cm}
% \\
% &\multicolumn{3}{c}{ \underline{$\rho = 0$} } \\
% \vspace{-0.2cm}
% \\
% HHR-AIC  & 0.59(0.13) & 0.4(0.17) & 1.00(0.00) \\
% HIPPO-AIC & 0.26(0.15) & 0.6(0.22) & 1.00(0.00) \\
% HHR-BIC & 0.59(0.13) & 0.39(0.16) & 1.00(0.00) \\
% HIPPO-BIC & 0.26(0.15) & 0.59(0.22) & 1.00(0.00) \\
% \\
% %VAR & 0.15(0.10) & 0.99(0.04) & 1.00(0.00) \\
% %\\
% &\multicolumn{3}{c}{ \underline{$\rho = 0.5$} } \\
% \vspace{-0.2cm}
% \\
% HHR-AIC  & 0.32(0.12) & 0.68(0.21) & 1.00(0.00) \\
% HIPPO-AIC & 0.38(0.22) & 0.69(0.25) & 1.00(0.03) \\ 
% HHR-BIC & 0.32(0.12) & 0.68(0.21) & 1.00(0.00) \\
% HIPPO-BIC & 0.38(0.22) & 0.69(0.25) & 0.99(0.03) \\
% %\\
% %VAR & 0.18(0.17) & 0.99(0.043) & 0.99(0.047) \\
% %\\
% \hline
% \hline
% \end{tabular}
% }
% \hfill{}
% \caption{
%   Mean (sd) performance of HHR and \acro under the model in
%   Example~1 (averaged over 100 independent runs). The mean parameter
%   $\betab$ is assumed to be known.
% }
% \label{tb:zero_mean}
% \end{table}

In this section, we conduct two small scale simulation studies to
demonstrate finite sample performance of \acro.  We compare it to the
HHR procedure \citep{daye2012high} and an oracle procedure that has
additional information.

{\bf Simulation 1.}  In the first scenario, we consider a toy model
where it is assumed that the data are generated from the following
model 
\[
Y = \sigma(\Xb) \epsilon,
\] 
where $\epsilon$ follows a standard normal
distribution and the logarithm of the variance is given by
\[
\log \sigma(\Xb)^2 = X_1 + X_2 + X_3.
\]
The covariates associated with the variance are jointly normal with
equal correlation $\rho$, and marginally $\Ncal(0,1)$. The remaining
covariates, $X_4, \ldots, X_p$ are iid random variables following the
standard Normal distribution and are independent from $(X_1, X_2,
X_3)$. We set $(n, p) = (200, 2000)$ and use $\rho = 0$ and $\rho =
0.5$. For each setting, we average results over 100 independent
simulation runs.

\begin{figure}[p]
\label{fig:sim1:roc}
\begin{center}
\includegraphics[width=0.45 \textwidth]{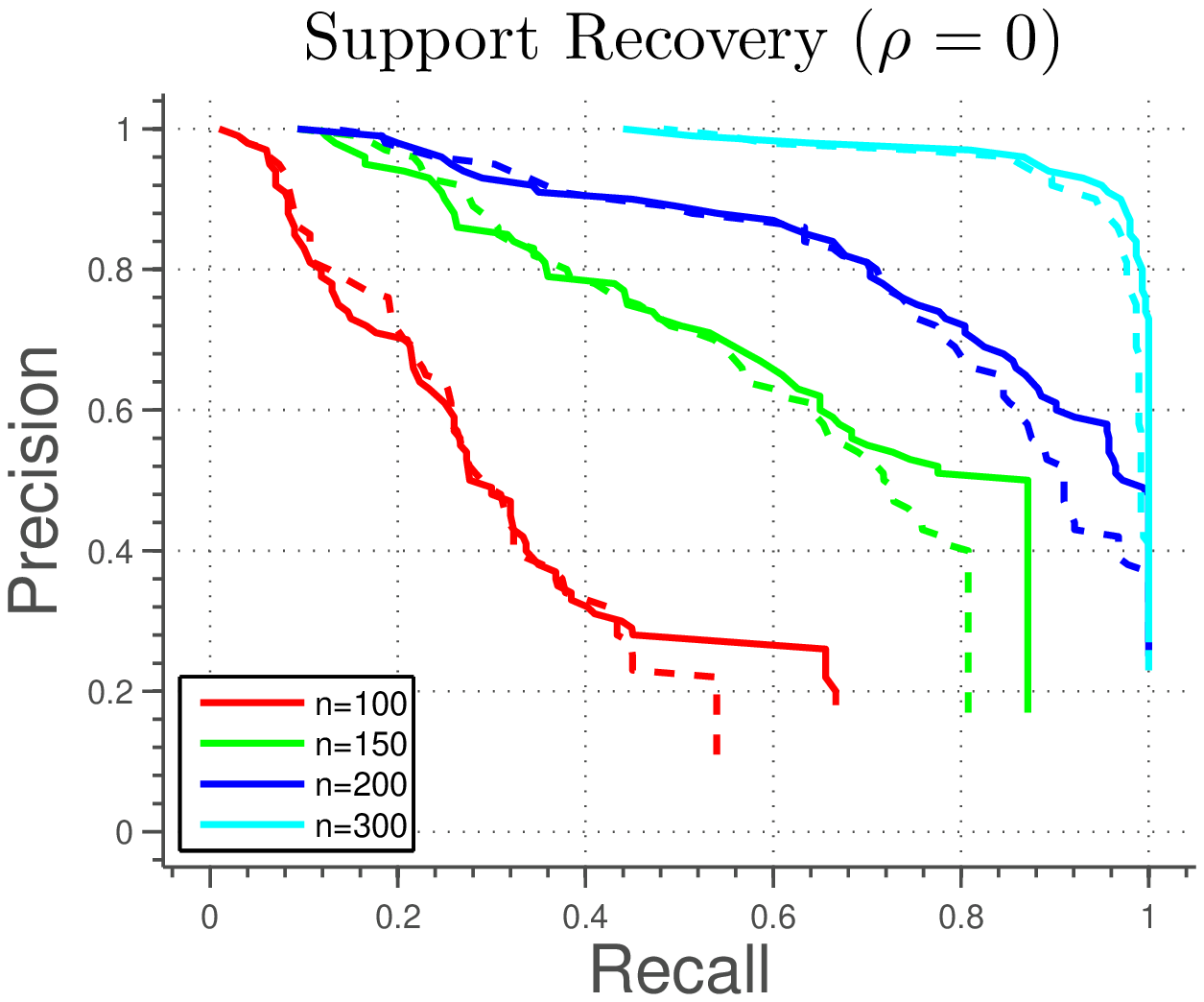}%
\includegraphics[width=0.45 \textwidth]{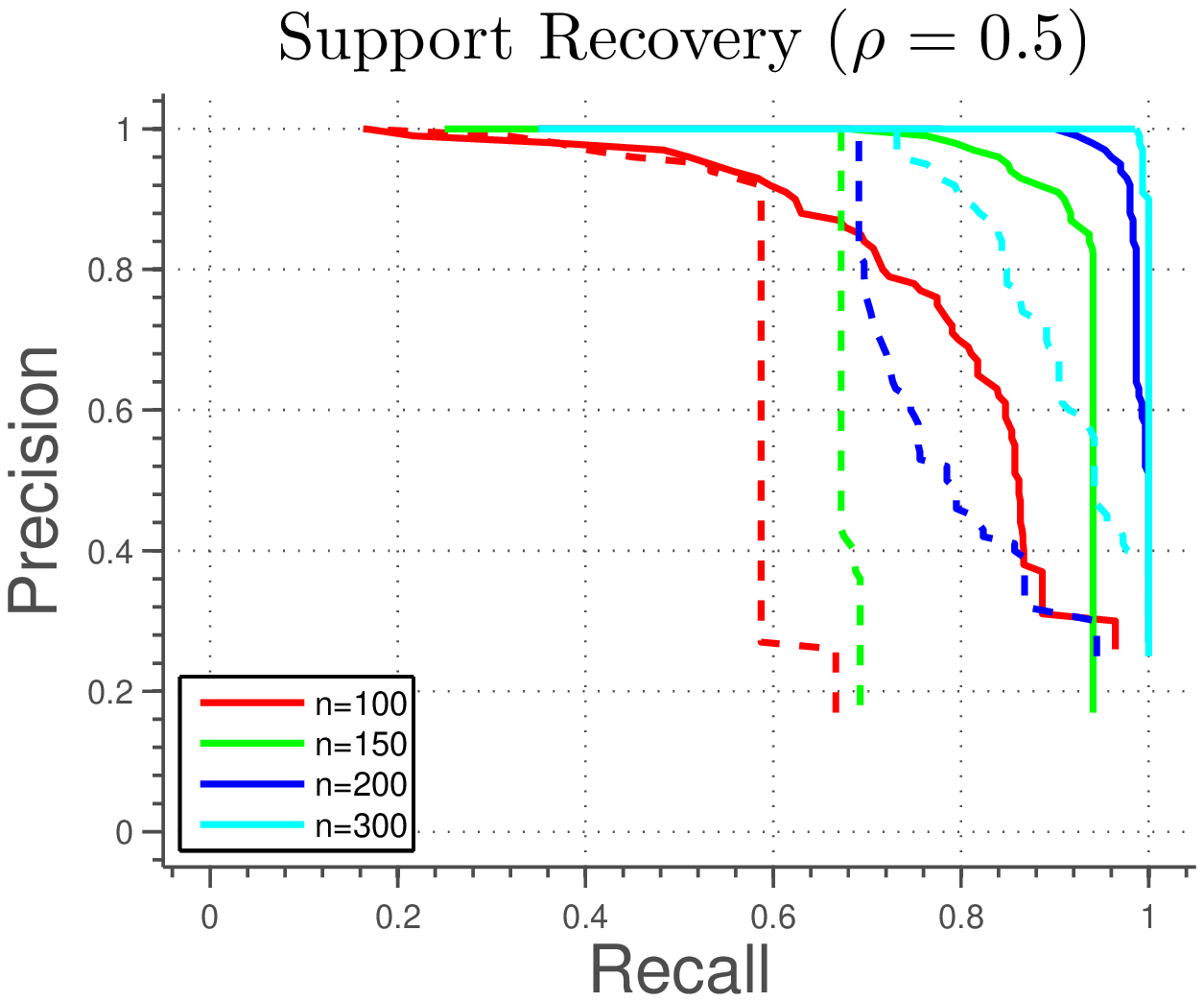}%
\end{center}
\caption{Precision against recall for model in Simulation 1 averaged
  over 100 simulation runs. Full line~\usebox{\LegendeA}  denotes
  results of HIPPO and dashed line~\usebox{\LegendeB}  denotes HHR.}
\end{figure}

\begin{figure}[p]
\label{fig:sim1:l2loss}
\begin{center}
\includegraphics[width=0.45 \textwidth]{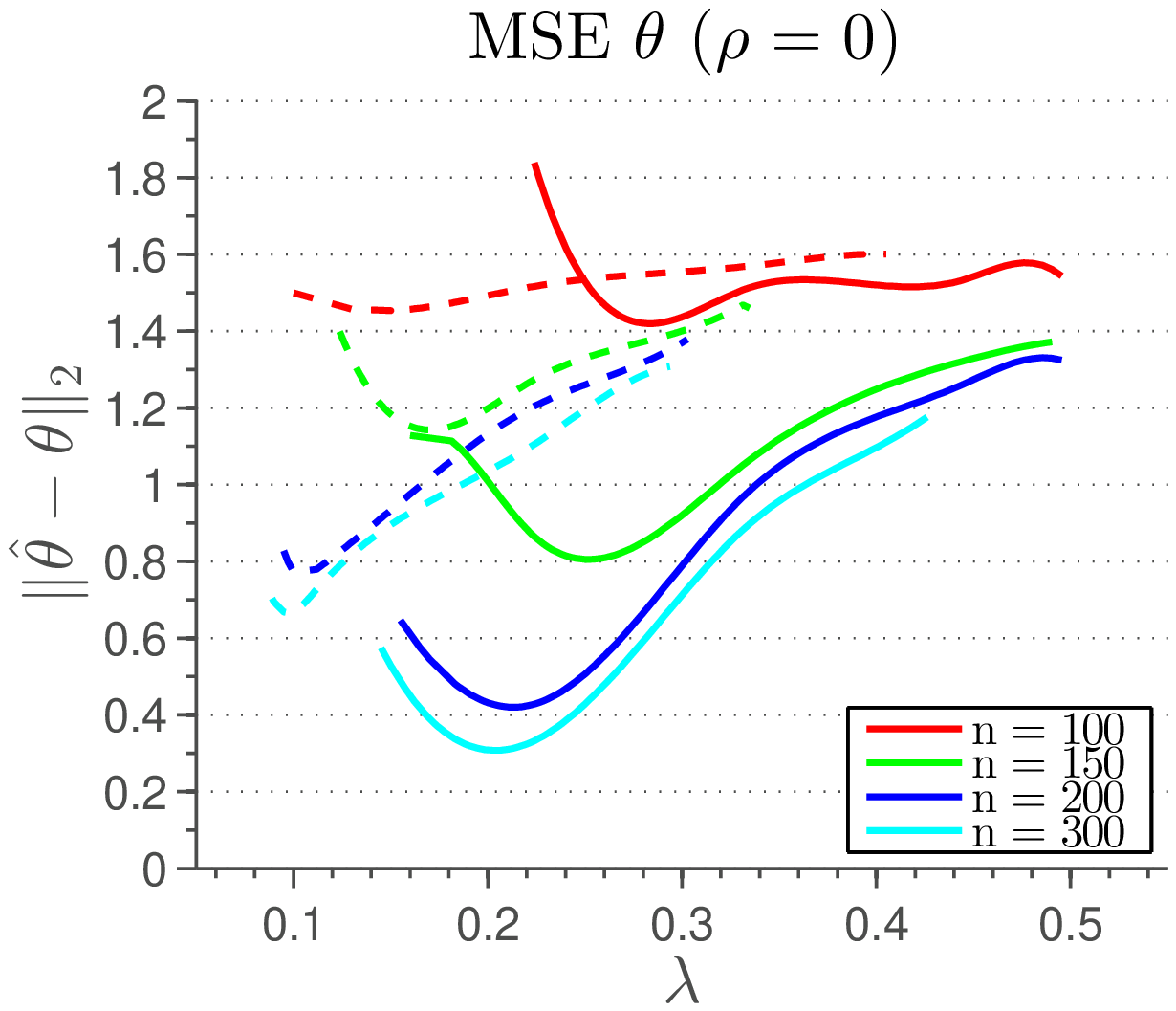}%
\includegraphics[width=0.45 \textwidth]{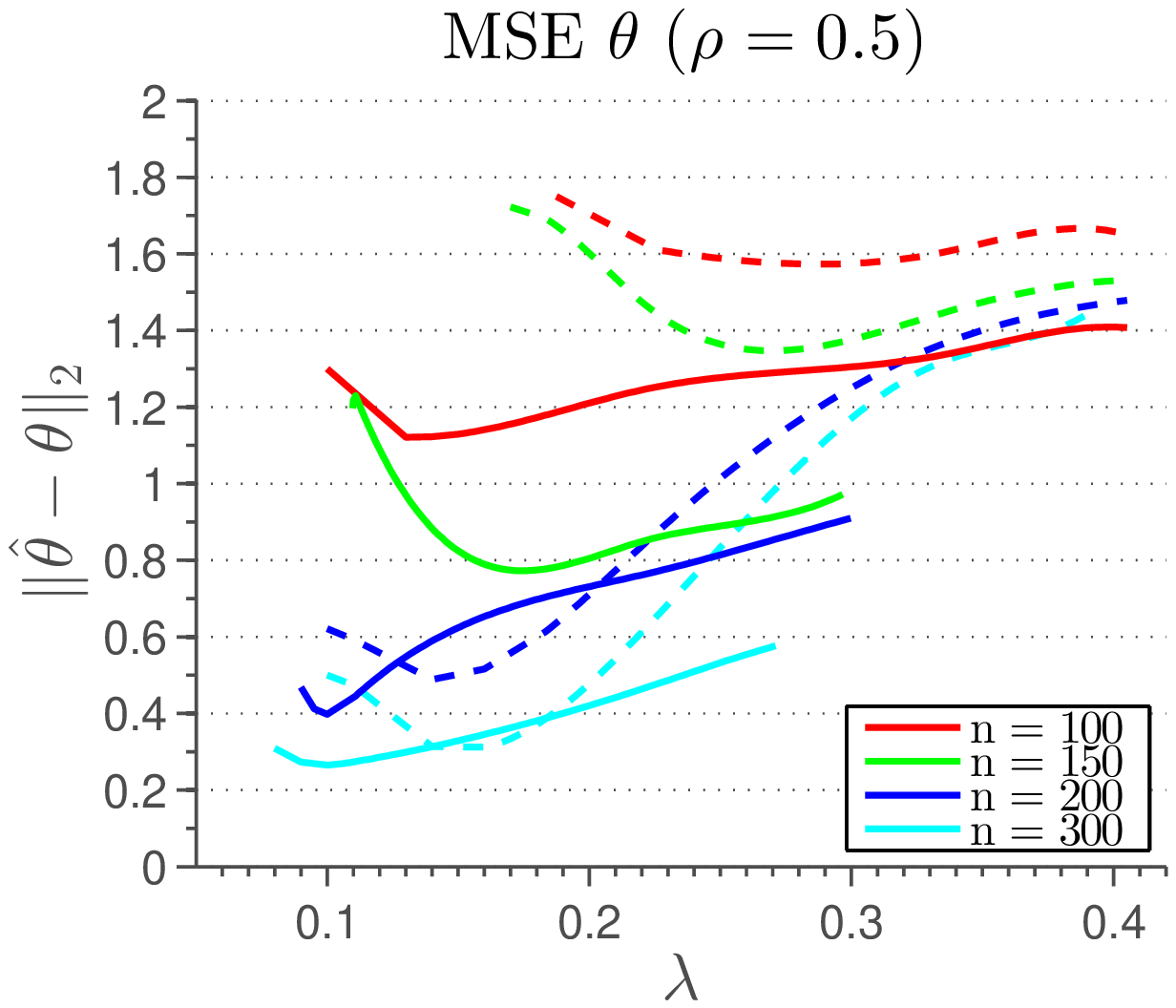}%
\end{center}
\caption{Error in estimating the true parameter $\theta^\star$ as a
  function of the tuning parameter $\lambda^T$
  averaged over 100 simulation runs. Full line~\usebox{\LegendeA}  denotes
  results of HIPPO and dashed line~\usebox{\LegendeB}  denotes HHR.}
\end{figure}

In Simulation 1, it is assumed that the estimation procedures know the
mean parameter, $\betab = \zero$ and we only estimate the variance parameter
$\thetab$. This example is provided to illustrate performance of the
penalized pseudolikelihood estimators in an idealized situation. When
the mean parameter needs to be estimated as well, we expect the
performance of the procedures only to get worse. Since the mean is
known, both HHR and \acro only solve the optimization procedure in
\eqref{eq:stage2}, HHR with the $\ell_1$-norm penalty and \acro with
the SCAD penalty, without iterating between \eqref{eq:stage3} and
\eqref{eq:stage2}. 

Figure~\ref{fig:sim1:roc} shows performance of HIPPO and HHR in
identifying the support of true variance parameter $\theta^\star$
measured by precision and recall\footnote{ We measure the
  identification of the support of $\betab$ and $\thetab$ using
  precision and recall. Let $\hat S$ denote the estimated set of
  non-zero coefficients of $S$, then the precision is calculated as
  ${\sf Pre}_{\beta} := |\hat S \cap S|/|\hat S|$ and the recall as
  ${\sf Rec}_{\beta} := |\hat S \cap S|/|S|$. Similarly, we can define
  precision and recall for the variance coefficients.}.
Figure~\ref{fig:sim1:l2loss} shows $\ell_2$ norm between $\hat \theta$
and $\theta^\star$ as a function of the penalty parameter.  Under this
toy model, we observe that HIPPO performs better than HHR.

% Table~\ref{tb:zero_mean} summarizes the results. Under this toy model,
% we observe that HIPPO performs better than HHR when the correlation
% between the relevant predictors is $\rho = 0$. However, we do not
% observe the difference between the two procedures when $\rho =
% 0.5$. The difference between the AIC and BIC is already visible in
% this example when $\rho = 0$. The AIC tends to pick more complex
% models, while the BIC is more conservative and selects a model with
% fewer variables. 

% Convergence of the parameters is measured in the $\ell_2$ norm,
% $\norm{\hat \beta - \beta}$ and $\norm{\hat \theta - \theta}$.  

\begin{table*}[t]
{
\hfill{}
\begin{tabular}{ll@{\hspace{1cm}}ccc@{\hspace{1cm}}ccc}
& \#it & $\norm{\beta - \hat\beta}_2$  &
     ${\sf Pre}_{\beta}$ & ${\sf Rec}_{\beta}$ 
   & $\norm{\theta - \hat\theta}_2$  &
     ${\sf Pre}_{\theta}$ & ${\sf Rec}_{\theta}$ \\
\cline{3-8}
\vspace{-0.2cm}
\\
&&\multicolumn{6}{c}{\underline{$n = 200$}} \\
\vspace{-0.2cm}
\\

HHR-AIC  & 1st & 0.78(0.52) & 0.44(0.22) & 1.00(0.00) 
           & 2.10(0.11) & 0.25(0.10) & 0.54(0.16) \\
     & 2nd & 0.31(0.13) & 0.88(0.15) & 1.00(0.00) 
           & 1.80(0.16) & 0.29(0.07) & 0.71(0.14) \\
HIPPO-AIC  & 1st & 0.66(0.84) & 0.75(0.29) & 1.00(0.02) 
           & 2.00(0.16) & 0.20(0.10) & 0.52(0.16) \\
     & 2nd & 0.08(0.07) & 0.84(0.24) & 1.00(0.00) 
           & 1.50(0.30) & 0.30(0.11) & 0.75(0.12) \\
\\
HHR-BIC  & 1st & 0.77(0.48) & 0.58(0.17) & 1.00(0.00) 
           & 2.10(0.10) & 0.41(0.18) & 0.45(0.14) \\
     & 2nd & 0.31(0.13) & 0.89(0.13) & 1.00(0.00) 
           & 1.90(0.16) & 0.38(0.15) & 0.65(0.17) \\
HIPPO-BIC  & 1st & 0.70(0.83) & 0.80(0.25) & 0.99(0.03) 
           & 2.00(0.14) & 0.39(0.18) & 0.50(0.17) \\
     & 2nd & 0.08(0.06) & 0.97(0.07) & 1.00(0.00) 
           & 1.60(0.28) & 0.44(0.16) & 0.72(0.14) \\
\\
%VAR &      & 0.02(0.01) & 1.00(0.00) & 1.00(0.00) 
%           & 1.20(0.52) & 0.90(0.14) & 0.75(0.14) \\
&&\multicolumn{6}{c}{\underline{$n = 400$}} \\
\vspace{-0.2cm}
\\
HHR-AIC  & 1st & 0.59(0.37) & 0.58(0.26) & 1.00(0.00) 
               & 1.90(0.11) & 0.36(0.14) & 0.72(0.18) \\
     & 2nd & 0.30(0.24) & 0.98(0.06) & 1.00(0.00) 
           & 1.70(0.16) & 0.43(0.13) & 0.81(0.16) \\
HIPPO-AIC  & 1st & 0.44(0.54) & 0.87(0.22) & 1.00(0.00) 
                 & 1.80(0.18) & 0.28(0.10) & 0.67(0.15) \\
     & 2nd & 0.06(0.29) & 0.97(0.12) & 1.00(0.02) 
           & 1.00(0.31) & 0.56(0.18) & 0.93(0.09) \\
\\
HHR-BIC  & 1st & 0.59(0.37) & 0.66(0.20) & 1.00(0.00) 
           & 1.90(0.11) & 0.46(0.18) & 0.66(0.20) \\
     & 2nd & 0.30(0.23) & 0.98(0.06) & 1.00(0.00) 
           & 1.70(0.17) & 0.46(0.13) & 0.80(0.17) \\
HIPPO-BIC  & 1st & 0.46(0.58) & 0.89(0.19) & 1.00(0.01) 
           & 1.80(0.18) & 0.39(0.17) & 0.65(0.17) \\
     & 2nd & 0.06(0.29) & 0.99(0.06) & 1.00(0.02) 
           & 1.00(0.31) & 0.63(0.20) & 0.92(0.09) \\
%\\
%VAR &      & 0.41(1.60) & 1.00(0.00) & 0.94(0.24) 
%           & 0.55(0.61) & 0.97(0.11) & 0.89(0.25) \\
\hline
\hline
\end{tabular}
}
\hfill{}
\caption{
  Mean (sd) performance of HHR and \acro under the model in
  Example~2 (averaged over 100 independent runs). We report estimated
  models after the first and second iteration. 
}
\label{tb:exper2}
\end{table*}

{\bf Simulation 2.} The following non-trivial model is borrowed from
\citet{daye2012high}.  The response variable $Y$ satisfies
\[
Y = \beta_0 + \sum_{j \in [p]} X_j \beta_j + 
\exp(\theta_0 + \sum_{j \in [p]} X_j\theta_j) \epsilon 
\]
with $p=600$, $\beta_0 = 2$, $\theta_0 = 1$,
\[
\betab_{[12]} = (3, 3, 3, 1.5, 1.5, 1.5, 0, 0, 0, 2, 2, 2)',
\]
\[
\thetab_{[15]} = (1, 1, 1, 0, 0, 0, 0.5, 0.5, 0.5, 0, 0, 0, 0.75, 0.75, 0.75)',
\]
and the remainder of the coefficients are $0$. The covariates are
jointly Normal with ${\rm cov}(X_i, X_j) = 0.5^{|i-j|}$ and the error
$\epsilon$ follows the standard Normal distribution.  We set $p = 600$
and change the sample size.
% This is a more
% realistic model than the one described in the previous example. We set
% $p = 600$ and the number of samples $n = 200$ and $n = 400$.

We first compare performance of HIPPO to an oracle procedure that
knows the mean parameter $\betab^\star$ or the variance parameter
$\theta^\star$. Figure~\ref{fig:sim2:oracle_ROC} shows performance of
HIPPO in recovering the support of $\betab^\star$ and $\thetab^\star$
compared to an oracle
procedure. Figure~\ref{fig:sim2:oracle_l2loss_theta} shows average
$\ell_2$ norm distance between $\hat \thetab$ and $\thetab^\star$.

\begin{figure}[p]
\label{fig:sim2:oracle_ROC}
\begin{center}
\includegraphics[width=0.45 \textwidth]{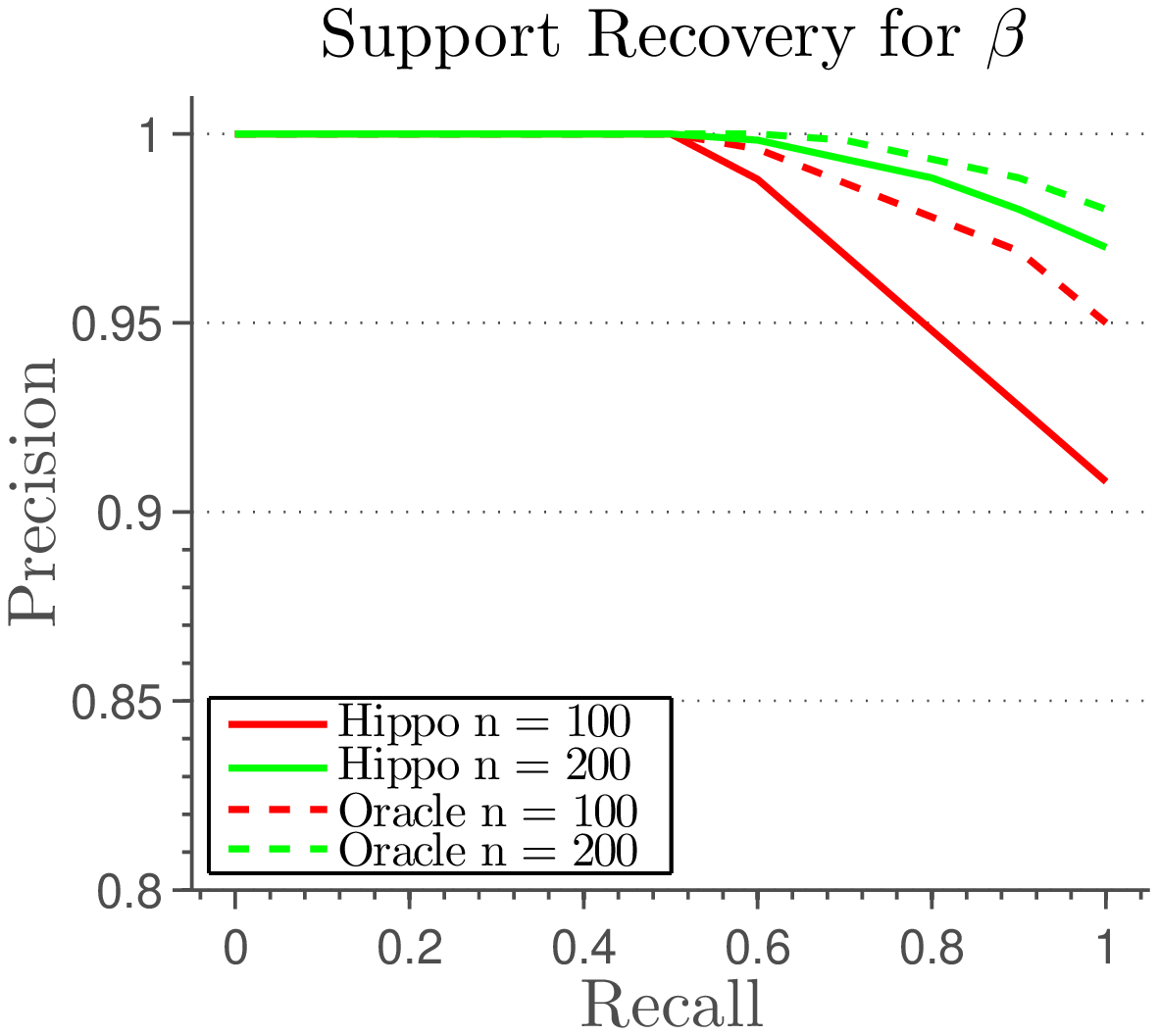}%
\includegraphics[width=0.45 \textwidth]{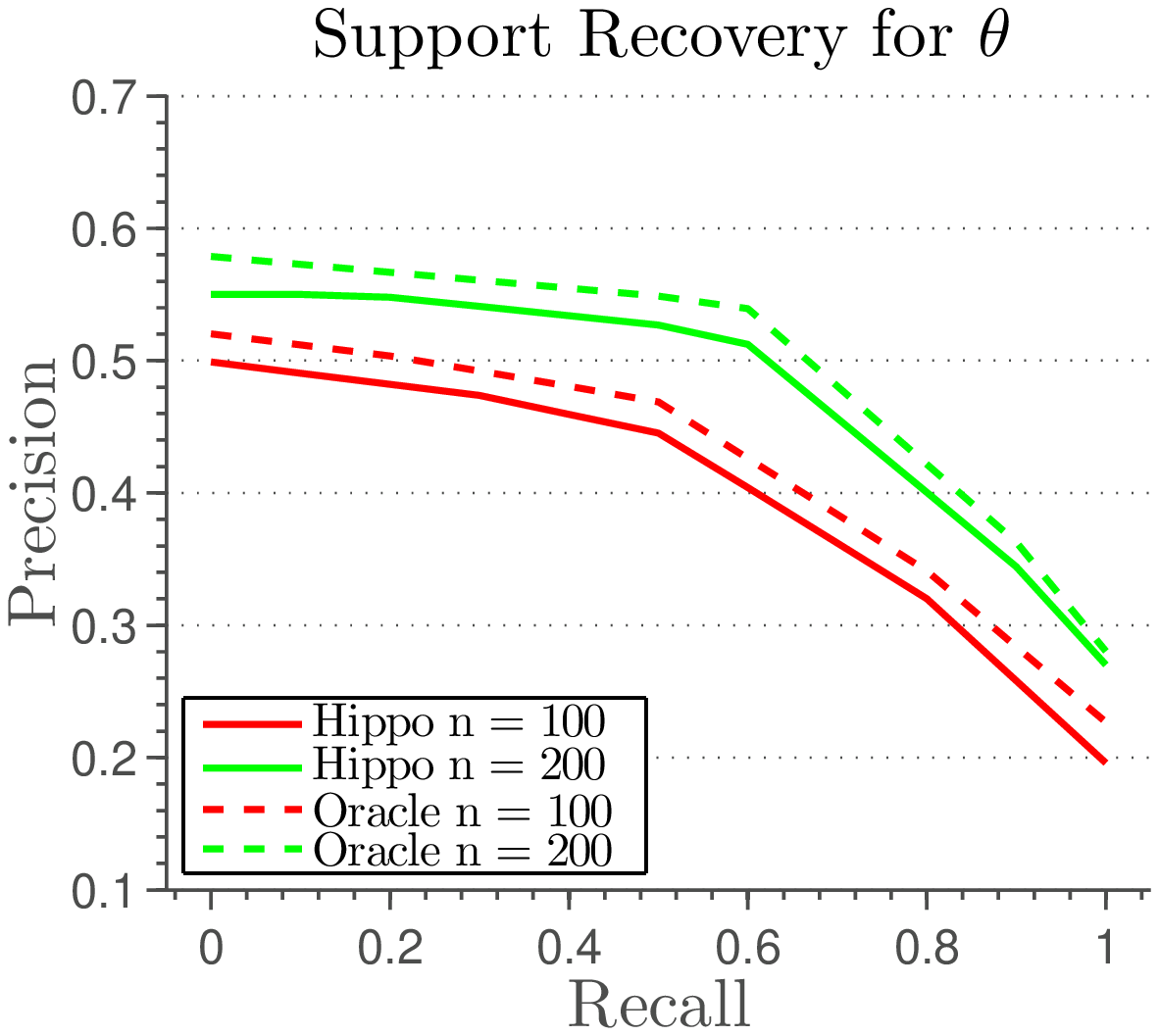}%
\end{center}
\caption{Precision against recall for model in Simulation 2 averaged
  over 100 simulation runs. The oracle procedure is assumed
  to know the true variance (mean) parameter $\thetab^\star$
  $(\betab^\star)$ when estimating the mean (variance).}
\end{figure}

\begin{figure}[p]
\label{fig:sim2:oracle_l2loss_theta}
\begin{center}
\includegraphics[width=0.45 \textwidth]{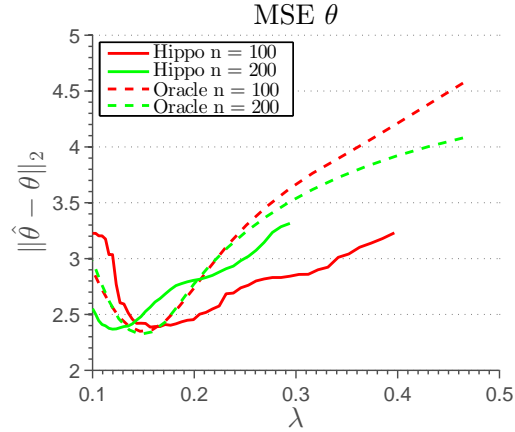}%
\end{center}
\caption{ Error in estimating the true parameter $\theta^\star$ as a
  function of the tuning parameter $\lambda^T$ averaged over 100
  simulation runs. The oracle procedure is assumed to know the true
  mean parameter $\betab^\star$.}
\end{figure}

Next we compare HIPPO to HHR. Table~\ref{tb:exper2} summarizes results
of the simulation. We observe that HIPPO consistently outperforms HHR
in all scenarios. Again, a general observation is that the AIC selects
more complex models although the difference is less pronounced when
the sample size $n=400$. 
Furthermore, we note that the estimation
error significantly reduces after the first iteration, which
demonstrates final sample benefits from estimating the variance. 
While the work of \cite{Belloni2012Sparse} shows that the first stage estimate $\hat \betab$ provides nearly-optimal MSE convergence rates, Theorem~\ref{thm:wls_mean} proves that the third stage can achieve an optimal asymptotic variance.
Hence, it is important to estimate the variance parameter $\thetab^\star$ well, both in
theory and practice.

%%% Local Variables:
%%% TeX-master: "paper"
%%% End:

%\input{data.tex}
\section{Discussion}

We have analyzed the performance of HIPPO for estimating mean and variance parameters under heteroscedasticity.
HIPPO is natural because it uses the lasso solution as the first stage, estimates the variances in the second stage, and then adjusts the mean parameters given the variances.
The theoretical statements in Theorems \ref{thm:known_mean}, \ref{thm:unknown_mean} are quite strong because they show that the HIPPO variance estimate, $\hat \thetab$, attains the oracle properties under the same assumptions that are required if the true mean parameter, $\betab^\star$, is known (with mild assumptions on the estimated mean parameter $\hat \betab$).
A similarly strong guarantee is proven for the mean parameter in Theorem \ref{thm:wls_mean}.

Throughout the paper, we assumed that the variance was a log-linear function of its parameters.
One natural extension of this work is to estimate this function in a semi-parametric fashion, such as assuming that the log-variance has a sparse generalized additive form (as in \cite{ravikumar2009sparse}).
HIPPO employs a non-convex penalty (for reasons stated in Section \ref{sec:method}) and it was shown to have favorable performance in Section \ref{sec:simulation}.
Nonetheless, it would be of interest to see what sort of performance guarantees could be made for the lasso penalty.
More generally, the heteroscedastic Gaussian model, \eqref{eq:model}, is a double generalized linear model, and extending this method to other distributions in that family would have applications in insurance and economics.

\subsection*{Acknowledgements}

JS is supported by NSF grant DMS-1223137.
This work was completed in part with resources provided by the University of Chicago Research Computing Center.

\bibliographystyle{my-plainnat}
\bibliography{paper}

\section{Appendix}
\subsection{Technical Lemmata}

\begin{lemma}[\cite{Laurent00adaptive}]
\label{lem:chi_squared}
Let for $i \in \{1,\ldots,p \}$, $a_i \ge 0$ and $\{X_i \}_{i = 1}^p$ be independent $\chi^2_1$ random variables. 
Define $Z = \sum_{i = 1}^p a_i (X_i - 1)$
\[
\begin{aligned}
\PP \{ Z \ge 2 \| \ab \|_2 \sqrt{x} + 2 \| \ab \|_\infty x \} \le e^{-x} \\
\PP \{ Z \le - 2 \| \ab \|_2 \sqrt{x} \} \le e^{-x} 
\end{aligned}
\]
Specifically, this means that 
\[
Z = O_\PP(\| \ab \|_1)
\]
\end{lemma}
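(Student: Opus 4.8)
The plan is to prove this by the classical Chernoff (Cram\'er) moment-generating-function method, following \cite{Laurent00adaptive}. First I would record the MGF of a single $\chi^2_1$ variable: for $s < 1/2$ we have $\EE[e^{s X_i}] = (1-2s)^{-1/2}$, so that for $0 < s < (2\|\ab\|_\infty)^{-1}$ independence of the $X_i$ gives $\log \EE[e^{sZ}] = \sum_{i=1}^p \bigl[ -s a_i - \tfrac12 \log(1 - 2 s a_i) \bigr]$, and symmetrically $\log \EE[e^{-sZ}] = \sum_{i=1}^p \bigl[ s a_i - \tfrac12 \log(1 + 2 s a_i) \bigr]$ for every $s > 0$.

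Next I would bound each summand by an elementary inequality. For the upper tail, writing $u = 2 s a_i \in (0,1)$ and comparing the power series of $-\log(1-u)$ against a geometric series yields $-\tfrac{u}{2} - \tfrac12\log(1-u) \le \tfrac{u^2}{4(1-u)}$, whence $\log \EE[e^{sZ}] \le s^2\|\ab\|_2^2 / (1 - 2 s\|\ab\|_\infty)$ after replacing each $a_i$ in the denominator by $\|\ab\|_\infty$. For the lower tail the bound $\log(1+u) \ge u - u^2/2$ gives the cleaner estimate $\log \EE[e^{-sZ}] \le s^2 \|\ab\|_2^2$.

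Then I would apply Markov's inequality to $e^{\pm sZ}$ and optimize over $s$. The lower tail is routine: minimizing $-st + s^2\|\ab\|_2^2$ at $s = t/(2\|\ab\|_2^2)$ gives $\PP\{Z \le -t\} \le \exp(-t^2/(4\|\ab\|_2^2))$, and substituting $t = 2\|\ab\|_2\sqrt{x}$ produces exactly $e^{-x}$. The main obstacle is the upper tail, where the denominator $1 - 2 s\|\ab\|_\infty$ makes $-st + s^2\|\ab\|_2^2/(1-2s\|\ab\|_\infty)$ a genuine Bennett-type function rather than a quadratic. The delicate step, which is the crux of the Laurent--Massart calibration, is to choose an explicit $s$ interior to $(0,(2\|\ab\|_\infty)^{-1})$ so that this exponent is at most $-x$ precisely when $t = 2\|\ab\|_2\sqrt{x} + 2\|\ab\|_\infty x$; this is exactly where the two terms in the threshold arise, the $\sqrt{x}$ term from the sub-Gaussian regime and the linear $x$ term from the sub-exponential regime.

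Finally, for the $O_\PP$ statement I would note $\EE Z = 0$ and combine the two tail bounds: given $\varepsilon > 0$, fix $x$ with $2 e^{-x} < \varepsilon$, so that $|Z| \le 2\|\ab\|_2\sqrt{x} + 2\|\ab\|_\infty x$ with probability at least $1 - \varepsilon$. Since $a_i \ge 0$ gives $\|\ab\|_2 \le \|\ab\|_1$ and $\|\ab\|_\infty \le \|\ab\|_1$, this threshold is at most a constant depending only on the fixed $x$ times $\|\ab\|_1$, which yields $Z = O_\PP(\|\ab\|_1)$.
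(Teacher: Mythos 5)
Your proposal is correct and takes essentially the same approach as the paper, which states this lemma as a citation to \cite{Laurent00adaptive} without reproducing a proof: the Chernoff/MGF argument you outline, with the series bound $-\tfrac{u}{2}-\tfrac12\log(1-u) \le \tfrac{u^2}{4(1-u)}$ for the upper tail and $\log(1+u)\ge u - u^2/2$ for the lower tail, is precisely the Laurent--Massart argument. The one step you leave implicit can be closed explicitly by taking $s = \sqrt{x}\big/\left(\|\ab\|_2 + 2\|\ab\|_\infty\sqrt{x}\right)$, which lies in $\left(0,(2\|\ab\|_\infty)^{-1}\right)$ and makes the exponent $-st + s^2\|\ab\|_2^2/(1-2s\|\ab\|_\infty)$ equal to exactly $-x$ at $t = 2\|\ab\|_2\sqrt{x}+2\|\ab\|_\infty x$, completing the upper-tail bound.
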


\begin{lemma}[\cite{pollard93}]
\label{lem:pollard}
Let $\ell_n(\thetab)$ be a convex function in $t$ dimensions ($t$ possibly growing in $n$).
Consider any quadratic approximation,
\[
\ell_n(\thetab + \tilde \thetab) = \ub' \tilde \thetab + \frac 12 \tilde \thetab' \Vb \tilde \thetab + r(\tilde \thetab)
\]
and let $\tilde \thetab$ denote the argmin.  Let $\Acal \subset \RR^t$ be compact and define the pseudo-norm
\[
\| \xb \|_\Acal = \sup_{\ab \in \Acal} |\ab' \xb|
\]
Let the following be the difference in objectives,
\[
\begin{aligned}
& \Delta(\delta) = \sup \{ |r(\tilde \thetab)| : \|\tilde \thetab - (- \Vb^{-1} \ub) \|_\Acal \le \delta \} 
\end{aligned}
\]
Then
\[
\PP\{ \| \tilde \thetab - (- \Vb^{-1} \ub) \|_\Acal \ge \delta \} \le \PP \{ \Delta(\delta) \ge \frac 12 \underline \lambda \delta^2 \}
\]
where 
\[
\underline \lambda = \min_{\|\xb\|_\Acal = 1} \xb' \Vb \xb.
\]
\end{lemma}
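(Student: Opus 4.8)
The plan is to prove this by Pollard's convexity argument, which rests on the fact that a convex function cannot fall below its value at a reference point once it has been shown to exceed that value all along a surrounding sphere. Write $Q(\tilde\thetab) := \ub'\tilde\thetab + \frac{1}{2}\tilde\thetab'\Vb\tilde\thetab$ for the quadratic part, so that $\ell_n(\thetab + \tilde\thetab) = Q(\tilde\thetab) + r(\tilde\thetab)$, and observe that the center of the pseudo-ball, $-\Vb^{-1}\ub$, is exactly the minimizer of $Q$. Since $\ell_n$ is convex, its quadratic part has $\Vb$ positive semidefinite and $\underline\lambda \ge 0$; the interesting case is $\underline\lambda > 0$, when the quadratic genuinely grows in the pseudo-norm. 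First I would reduce the claim to a lower bound for $\ell_n(\thetab + \cdot)$ on the pseudo-sphere of radius $\delta$ about the center.

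The quadratic lower bound is exact and costs nothing. Because $-\Vb^{-1}\ub$ minimizes $Q$ and $Q$ has constant Hessian $\Vb$,
\[
Q(\tilde\thetab) - Q(-\Vb^{-1}\ub) = \frac{1}{2}(\tilde\thetab + \Vb^{-1}\ub)'\Vb(\tilde\thetab + \Vb^{-1}\ub),
\]
with no remainder. Both sides are homogeneous of degree two in $\tilde\thetab + \Vb^{-1}\ub$, so the definition of $\underline\lambda$ bounds the right-hand side below by $\frac{1}{2}\underline\lambda\|\tilde\thetab + \Vb^{-1}\ub\|_\Acal^2$; on the sphere $\|\tilde\thetab + \Vb^{-1}\ub\|_\Acal = \delta$ this is at least $\frac{1}{2}\underline\lambda\delta^2$. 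This is the only place the quantity $\underline\lambda$ enters.

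Next comes the convexity step, which is the heart of the argument. Suppose the argmin $\tilde\thetab$ of $\ell_n(\thetab + \cdot)$ satisfies $\|\tilde\thetab + \Vb^{-1}\ub\|_\Acal \ge \delta$. The segment from the center to $\tilde\thetab$ meets the sphere at $\zb = (1-\tau)(-\Vb^{-1}\ub) + \tau\tilde\thetab$ with $\tau = \delta / \|\tilde\thetab + \Vb^{-1}\ub\|_\Acal \in (0,1]$, and convexity of $\ell_n$ together with optimality $\ell_n(\thetab + \tilde\thetab) \le \ell_n(\thetab - \Vb^{-1}\ub)$ forces $\ell_n(\thetab + \zb) \le \ell_n(\thetab - \Vb^{-1}\ub)$. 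Substituting $\ell_n = Q + r$ and using the sphere bound yields
\[
\frac{1}{2}\underline\lambda\delta^2 \le Q(\zb) - Q(-\Vb^{-1}\ub) \le r(-\Vb^{-1}\ub) - r(\zb).
\]
Since both $\zb$ and $-\Vb^{-1}\ub$ lie in the closed pseudo-ball of radius $\delta$, the right-hand side is controlled by $\Delta(\delta)$, giving the inclusion $\{\|\tilde\thetab + \Vb^{-1}\ub\|_\Acal \ge \delta\} \subseteq \{\Delta(\delta) \ge \frac{1}{2}\underline\lambda\delta^2\}$; taking probabilities then yields the stated bound.

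The main obstacle is the bookkeeping for the remainder at the center and the fact that $\|\cdot\|_\Acal$ is only a seminorm. To land the constant $\frac{1}{2}$ exactly as written, I would read $\Delta(\delta)$ as bounding the remainder fluctuation relative to its value at $-\Vb^{-1}\ub$, so that $r(-\Vb^{-1}\ub) - r(\zb)$ is directly at most $\Delta(\delta)$; bounding $|r(-\Vb^{-1}\ub)|$ and $|r(\zb)|$ separately would instead cost a harmless factor of two. The seminorm needs care only in the homogeneity step $\xb'\Vb\xb \ge \underline\lambda\|\xb\|_\Acal^2$ and in placing $\zb$ on the sphere, and both go through because the two sides are positively homogeneous of degree two and positive semidefiniteness of $\Vb$ covers directions with $\|\xb\|_\Acal = 0$.
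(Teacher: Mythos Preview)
Your argument is exactly Pollard's convexity trick, which is precisely what the paper invokes: its entire proof reads ``This proof is based on Lemma 2 in \cite{pollard93}, modified to accommodate the norm $\|\cdot\|_\Acal$.'' You have in fact supplied more detail than the paper does, and the adaptation to the seminorm $\|\cdot\|_\Acal$ is handled correctly --- the homogeneity step and the construction of the boundary point $\zb$ go through because $\|\cdot\|_\Acal$ is positively homogeneous and the relevant set $\{\|\xb\|_\Acal \le \delta\}$ is convex.

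Your caveat about the constant is accurate: with $\Delta(\delta)$ defined as a sup of $|r(\cdot)|$ over the ball, the inequality $r(-\Vb^{-1}\ub) - r(\zb) \le \Delta(\delta)$ is not quite justified and one naturally picks up $2\Delta(\delta)$, yielding $\tfrac14\underline\lambda\delta^2$ rather than $\tfrac12\underline\lambda\delta^2$. This is immaterial for every use of the lemma in the paper (only $\Delta(\delta) = o_\PP(1)$ with $\underline\lambda = \Omega(1)$ is ever needed, as the Remark following the lemma makes explicit), so the discrepancy is cosmetic.
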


\begin{remark}
This implies that if for any fixed $\delta > 0$, $\Delta(\delta) = o_\PP(1)$ and $\underline \lambda = \Omega(1)$ then
\[
\| \tilde \thetab - (- \Vb^{-1} \ub) \|_\Acal = o_\PP(1)
\]
\end{remark}

\begin{proof}
This proof is based on Lemma 2 in \cite{pollard93}, modified to accommodate the norm $\| .\|_\Acal$.
\end{proof}

\begin{lemma}[\cite{van2000empirical} Lemma 2.5]
\label{lem:VDG_cover}
A ball of radius $R$ in the Euclidean space $\RR^d$ can be covered by 
\[
\left(\frac{4 R + \delta}{\delta}\right)^d
\]
balls of radius $\delta$.
\end{lemma}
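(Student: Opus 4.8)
The plan is to prove the covering bound through the standard duality between covering and packing numbers, followed by a volume comparison. First I would fix the ball $B(0,R) \subset \RR^d$ and construct a maximal $\delta$-separated subset $\{x_1,\ldots,x_N\} \subseteq B(0,R)$, that is, a finite set of points whose pairwise distances satisfy $\|x_i - x_j\| \ge \delta$ and to which no further point of $B(0,R)$ can be adjoined without violating this separation. Such a maximal set exists and is finite because any $\delta$-separated subset of a bounded set has cardinality controlled by the volume bound derived below, so a greedy construction terminates.

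The second step is to observe that maximality forces $\{x_1,\ldots,x_N\}$ to be a $\delta$-net. Indeed, if some $y \in B(0,R)$ satisfied $\|y - x_i\| \ge \delta$ for every $i$, then $y$ could be added to the set while preserving the separation, contradicting maximality. Hence there is always an $i$ with $\|y - x_i\| < \delta$, so the $N$ balls of radius $\delta$ centered at the $x_i$ already cover $B(0,R)$, and it remains only to bound $N$.

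For the third step I would bound $N$ by a volume argument. Since the centers are pairwise at distance at least $\delta$, the open balls $B(x_i, \delta/2)$ have disjoint interiors; and because each $x_i$ lies in $B(0,R)$, all of these small balls are contained in the enlarged ball $B(0, R + \delta/2)$. Using that Lebesgue volume in $\RR^d$ scales as $\mathrm{vol}(B(0,r)) = c_d\, r^d$, disjointness together with containment give $N\, c_d (\delta/2)^d \le c_d (R + \delta/2)^d$. The dimensional constant $c_d$ cancels, and solving for $N$ yields $N \le \left(\frac{2R + \delta}{\delta}\right)^d$, which is already at least as strong as the claimed bound $\left(\frac{4R+\delta}{\delta}\right)^d$; the stated constant follows from the identical scheme with a slightly cruder containment, matching the formulation of \cite{van2000empirical}.

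I do not expect a genuine obstacle here, since the scheme is entirely routine; the only point demanding care is the bookkeeping of constants, specifically the choice of strict versus non-strict separation and which enlarged ball contains the packing balls, which is what distinguishes the sharp factor $2R$ from the stated $4R$. The conceptual heart of the estimate is the cancellation of the unknown volume constant $c_d$ through the ratio of two $d$-dimensional volumes, and it is exactly this cancellation that renders the bound dimension-explicit.
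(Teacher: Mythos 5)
Your proof is correct. The paper does not prove this lemma at all---it imports it verbatim from the cited reference (van de Geer 2000, Lemma 2.5)---and your packing-plus-volume-comparison argument (maximal $\delta$-separated set, maximality implies covering, disjoint balls of radius $\delta/2$ inside the ball of radius $R+\delta/2$) is precisely the standard proof of that result; your bookkeeping in fact yields the sharper constant $\left(\frac{2R+\delta}{\delta}\right)^d$, which immediately implies the stated bound $\left(\frac{4R+\delta}{\delta}\right)^d$.
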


\subsection{Outline of Stage 2 Proofs}

The standard procedure for constructing the local minimizer of the least squares objective with a non-convex penalty is to use the maximum likelihood estimator for likelihood with known support, $T$, and demonstrate that this achieves the first-order conditions \cite{fan01variable}.
Situations in which the support is known will be referred to as the oracle setting.
In the known-$\betab^\star$ setting (the setting of Theorem \ref{thm:known_mean}), we will demonstrate first that the oracle MLE where the likelihood is computed using $\hat \betab = \betab^\star$ (we will refer to this estimator as the OMLE for oracle MLE) attains \oraclek.
Using this we will demonstrate that it gives us a local minimizer of \eqref{eq:stage2}, implying that it is the penalized maximum likelihood estimator (we will refer to this as the PMLE), in turn proving Theorem \ref{thm:known_mean}.
We then consider $\hat \betab \ne \betab^\star$ estimated in stage 1, and call the resulting likelihood a pseudo-likelihood.
Similarly to the MLE, we show that the oracle setting for the pseudo-likelihood (we call this estimator the OMPLE) attains \oraclek~under the conditions of Theorem \ref{thm:unknown_mean} using what we have demonstrated regarding the OMLE.
We then show that the OMPLE is in fact a local minimizer of the pseudo-likelihood, so that it is a penalized maximum pseudo-likelihood estimator (PMPLE), in turn proving Theorem \ref{thm:unknown_mean}.
In summary, we show that the OMLE is in fact the PMLE, and then similarly demonstrate that the OMPLE is a PMPLE.

\subsection{Proof of Theorem \ref{thm:known_mean}}

Throughout this section let $\hat \betab = \betab^\star$.
We will begin by proving that the known-$\betab^\star$ MLE, the OMLE, $\hat \thetab_T$ attains the oracle properties.
Then we will show that this is a local minimizer for \eqref{eq:stage2}.

\subsubsection{Oracle property \eqref{eq:known_mean_oracle3} for $\hat \thetab_T$}

Suppose that we know that the true sparsity set $T= \supp(\thetab^\star)$ and we have access to the mean parameter $\betab^\star$.
Thus we can determine precisely,
\[
\eta^2_i = (y_i - \xb_i' \betab^\star)^2 = \epsilon_i^2 e^{\xb_i'\thetab^\star}.
\]
Furthermore, we can minimize the likelihood for $\thetab_T$,
\[
\ell(\thetab_T) = \sum_{i = 1}^n \log \sigma^2_i(\thetab_T) + \frac{(y_i - \xb_i' \betab^\star)^2}{\sigma^2_i(\thetab_T)} = \sum_{i = 1}^n \xb_{i,T}' \thetab_T + \eta_i^2 e^{-\xb_{i,T}'\thetab_T}
\]
The gradient and Hessian of this log-likelihood at the true parameter $\thetab_T^\star$,
\[
\begin{aligned}
\ub(\thetab_T^\star) = \sum_{i = 1}^n (1 - \eta_i^2 e^{-\xb_{i,T}'\thetab_T^\star} ) \xb_{i,T} = \sum_{i = 1}^n (1 - \epsilon_i^2) \xb_{i,T}\\
\Vb(\thetab_T^\star) = \sum_{i = 1}^n \eta_i^2 e^{-\xb_{i,T}'\thetab_T^\star} \xb_{i,T} \xb_{i,T}' = \sum_{i = 1}^n\epsilon_i^2 \xb_{i,T} \xb_{i,T}'.\\
\end{aligned}
\]
Furthermore, the $k$-th derivative tensor of the log-likelihood is
\[
\nabla^{\otimes k} \ell (\thetab_T^\star) = \sum_{i=1}^n \eta_i^2 e^{-\xb_{i,T}'\thetab_T^\star} \xb_{i,T}^{\otimes k}.
\]
For a tensor of the form $A = \sum_{i = 1}^n \ab_i^{\otimes k}$ and a vector $\bb\in \RR^t$ let $A(\bb) = \sum_{i=1}^n (\ab'\bb)^k$.
\begin{lemma}
\label{lem:hess_conc}
Let $k = 2,3$ and
\[
\hat \Sigmab_T^{(k)} = \frac 1n \sum_{i=1}^n \xb_{i,T}^{\otimes k}.
\]
With probability $1 - \delta$, the difference between the $k$-th derivative tensor and $\hat \Sigmab_T^{(k)}$ is bounded by
\[
\Lambda_{\max} \left(\frac 1n \nabla^{\otimes k} \ell (\thetab_T^\star) - \hat \Sigmab_T^{(k)} \right) \le  \frac 2n \left(\sqrt{t n \Lambda_{\max} (\hat \Sigmab_T^{(2k)}) \log(\xi/\delta)} + t \max_{i \in [n]} \|\xb_{i,T}\|^{k} \log (\xi/\delta) \right)
\]
where $\xi$ is some constant only dependent on $k$.
\end{lemma}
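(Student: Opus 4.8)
The plan is to reduce the statement to a Bernstein-type deviation bound for weighted sums of $\chi^2_1$ variables, evaluated at a fixed direction, and then to uniformize over the unit sphere by a covering argument.

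First I would simplify the object being bounded. Since $\thetab^\star$ is supported on $T$ we have $\xb_i'\thetab^\star = \xb_{i,T}'\thetab_T^\star$, and using $\eta_i^2 = \epsilon_i^2 e^{\xb_i'\thetab^\star}$ gives the cancellation $\eta_i^2 e^{-\xb_{i,T}'\thetab_T^\star} = \epsilon_i^2$. Hence
\[
\frac{1}{n}\nabla^{\otimes k}\ell(\thetab_T^\star) - \hat\Sigmab_T^{(k)} = \frac{1}{n}\sum_{i=1}^n(\epsilon_i^2 - 1)\,\xb_{i,T}^{\otimes k},
\]
so that, in the notation $A(\bb)$ introduced before the lemma, the value of this difference tensor on a unit vector $\zb$ is $\tfrac{1}{n}Z_\zb$ with $Z_\zb = \sum_i(\epsilon_i^2-1)(\xb_{i,T}'\zb)^k$ a centered weighted sum of $\chi^2_1$ variables, and $\Lambda_{\max}$ of the difference equals $\tfrac{1}{n}\sup_{\|\zb\|=1}Z_\zb$.

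Second, for a fixed $\zb$ I would apply Lemma~\ref{lem:chi_squared} with weights $a_i = (\xb_{i,T}'\zb)^k$. For $k=2$ these are nonnegative; for $k=3$ they are signed, so I would split $a_i$ into its positive and negative parts and apply the lemma to each, costing only constants. The norms are identified by $\|\ab\|_2^2 = \sum_i(\xb_{i,T}'\zb)^{2k} \le n\Lambda_{\max}(\hat\Sigmab_T^{(2k)})$ (the exponent $2k$ is even, so this form is nonnegative and bounded by its largest eigenvalue, which is exactly why $\hat\Sigmab_T^{(2k)}$ appears) and $\|\ab\|_\infty = \max_i|\xb_{i,T}'\zb|^k \le \max_i\|\xb_{i,T}\|^k$ by Cauchy--Schwarz. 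This yields, with probability at least $1-c\,e^{-x}$,
\[
|Z_\zb| \le 2\sqrt{n\,\Lambda_{\max}(\hat\Sigmab_T^{(2k)})\,x} + 2\max_{i\in[n]}\|\xb_{i,T}\|^k\,x.
\]
Third I would uniformize: using Lemma~\ref{lem:VDG_cover}, cover the unit sphere in $\RR^t$ by an $r$-net $\mathcal{N}$ with $|\mathcal{N}| \le (C_k)^t$, take a union bound over $\mathcal{N}$, and set $x = t\log(\xi/\delta)$ so the total failure probability is at most $\delta$ (feasible for a suitable $k$-dependent $\xi$ since $t\ge 1$). This choice is precisely what matches the claimed bound: $\sqrt{x}$ produces $\sqrt{t\,n\,\Lambda_{\max}(\hat\Sigmab_T^{(2k)})\log(\xi/\delta)}$ and $x$ produces $t\max_i\|\xb_{i,T}\|^k\log(\xi/\delta)$. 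Finally I would pass from the net to the sphere by the standard comparison for $k$-homogeneous forms, $\sup_{\|\zb\|=1}|g(\zb)| \le (1-kr)^{-1}\max_{\yb\in\mathcal{N}}|g(\yb)|$ with $r$ a small fixed fraction, absorbing the factor into $\xi$, and then divide by $n$.

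I expect the main obstacle to be the net-to-sphere extension in the cubic case $k=3$: unlike the symmetric-matrix operator norm, the comparison for a symmetric $3$-tensor must be carried out so that the multiplicative constant stays dimension-free, ensuring the $t$-dependence enters only through $\log|\mathcal{N}|$. Relatedly, the signed weights force the two-sided tail to be obtained by the positive/negative split rather than by a direct appeal to the nonnegative-weight statement of Lemma~\ref{lem:chi_squared}. The essential bookkeeping is checking that the union-bound exponent contributes the factor $t$ to \emph{both} terms as written (a $t$ inside the square root of the first and a full factor $t$ in the second), which is what makes the final constants align with the stated inequality.
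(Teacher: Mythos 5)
Your proposal is correct and takes essentially the same route as the paper's own proof: the same reduction of the difference tensor to $\frac{1}{n}\sum_i(\epsilon_i^2-1)\,\xb_{i,T}^{\otimes k}$, the same pointwise application of Lemma~\ref{lem:chi_squared} with weights $(\xb_{i,T}'\zb)^k$ bounded via $\hat\Sigmab_T^{(2k)}$ and $\max_i\|\xb_{i,T}\|^k$, and the same covering-plus-union-bound step using Lemma~\ref{lem:VDG_cover} with $x \propto t\log(\xi/\delta)$. If anything you are more careful than the paper on two points it glosses over: the positive/negative split required because Lemma~\ref{lem:chi_squared} is stated for nonnegative weights (relevant when $k=3$), and the need for a dimension-free net-to-sphere comparison for the cubic form, which the paper handles with its own expansion around the nearest net point.
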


\begin{proof}
Let $\ab$ be fixed such that $\| \ab \| = 1$.
\[
\begin{aligned}
&\frac 1n \nabla^{\otimes k} \ell (\thetab_T^\star) - \hat \Sigmab_T^{(k)} = \frac 1n \sum_{i=1}^n (\epsilon_i^2 - 1) \xb_{i,T}^{\otimes k} \\
&\left(\frac 1n \nabla^{\otimes k} \ell (\thetab_T^\star) - \hat \Sigmab_T^{(k)} \right) (\ab^{\otimes k}) = \frac 1n \sum_{i=1}^n (\epsilon_i^2 - 1) (\ab'\xb_{i,T})^k \\
&\le \frac 2n \left(\sqrt{\sum_{i=1}^n (\ab'\xb_{i,T})^{2k} \log(1/\delta)} + \max_{i \in [n]} (\ab'\xb_{i,T})^{k} \log (1/\delta) \right)\\
&\le \frac 2n \left(\sqrt{\sup_{\|\ab\| = 1}\sum_{i=1}^n (\ab'\xb_{i,T})^{2k} \log(1/\delta)} + \max_{i \in [n]} \|\xb_{i,T}\|^{k} \log (1/\delta) \right)
\end{aligned}
\]
by Lemma \ref{lem:chi_squared}.
Let $\Acal \subset \Scal_T$ (where $\Scal_T \subset \RR^T$ is the unit sphere) be a minimal $\xi$-net, meaning that for any $\bb \in \Scal_T$, $\exists \ab \in \Acal$ such that $\| \ab - \bb\| \le \xi$ and $\Acal$ minimizes $|\Acal|$ among all such $\xi$-nets.
Let $\Bb = \frac 1n \nabla^{\otimes k} \ell (\thetab^\star) - \hat \Sigmab_T^{(k)}$.
For some $\bb \in \Scal_T$,
\[
\Lambda_{\max}(\Bb) = \Bb (\bb^{\otimes k}) = \Bb( (\bb' \ab \ab + \sqrt{1 - (\bb'\ab)^2} \bb^{\perp})^{\otimes k} )
\]
where $\ab \in \Acal$ is the closest point to $\bb$ and $\bb^{\perp}$ is a unit vector orthogonal to $\ab$.
Let $\xi >0$ such that
\[
\left( 1 - \frac \xi 2 + \sqrt{1 - (1 - \xi/2)^2} \right)^k - \left( 1 - \frac \xi 2 \right)^k = \frac 12.
\]
By assumption, $\ab'\bb \ge 1 - \xi/2$ ($\|\ab - \bb\| \le \xi$) and
\[
\begin{aligned}
&\Bb( (\bb' \ab \ab + \sqrt{1 - (\bb'\ab)^2} \bb^{\perp})^{\otimes k} ) = \sum_{l = 1}^k {k \choose l} (\ab'\bb)^l (1 - (\ab'\bb)^2)^{\frac{k-l}{2}} \Bb(\ab^{\otimes l} \otimes \bb^{\perp\otimes k-l}) \\
&\le (\ab'\bb)^k \Bb(\ab^{\otimes k}) + \sum_{l = 1}^{k-1} {k \choose l} (\ab'\bb)^l (1 - (\ab'\bb)^2)^{\frac{k-l}{2}} \Lambda_{\max}(\Bb) \le \Bb(\ab^{\otimes k}) + \frac 12 \Lambda_{\max}(\Bb).
\end{aligned}
\] 
Therefore,
\[
\Lambda_{\max}(\Bb) \le 2 \sup_{\ab \in \Acal} \Bb(\ab^{\otimes k}).
\]
Select $\Acal$ to be the covering of the unit ball guaranteed by Lemma \ref{lem:VDG_cover}, which is of size $C^t$ for some constant $C$ (since $\xi$ is a constant).
Hence, we can apply the union bound by substituting $\delta \gets \delta/C^t$, which completes the proof.
\end{proof}

\begin{lemma}
  \label{lem:third_term}
Let $\delta > 0$ be fixed,
\[
\sup_{ \tilde \thetab \in \RR^T, \| \tilde \thetab \| \le \delta} \Lambda_{\max} \left(\frac 1n \nabla^{\otimes 3} \ell \left(\thetab_T^\star + \frac{\sqrt t}{\sqrt n} \tilde \thetab \right) - \frac 1n \nabla^{\otimes 3} \ell (\thetab_T^\star) \right) =  \tilde o_\PP\left( t^{3/2} \right).
\]
\end{lemma}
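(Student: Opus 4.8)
The plan is to use the explicit form of the third-derivative tensor to reduce the claim to a weighted $\chi^2$ sum that can be controlled by Lemma \ref{lem:chi_squared}. Recall from the computation preceding this lemma that $\nabla^{\otimes k} \ell(\thetab_T) = \sum_{i=1}^n \eta_i^2 e^{-\xb_{i,T}'\thetab_T} \xb_{i,T}^{\otimes k}$, and that at the truth $\eta_i^2 e^{-\xb_{i,T}'\thetab_T^\star} = \epsilon_i^2$. Fixing a unit vector $\ab$ and writing $z_i = \frac{\sqrt t}{\sqrt n}\xb_{i,T}'\tilde \thetab$, applying the difference tensor to $\ab^{\otimes 3}$ gives
\[
\left(\frac 1n \nabla^{\otimes 3}\ell\left(\thetab_T^\star + \frac{\sqrt t}{\sqrt n}\tilde \thetab\right) - \frac 1n \nabla^{\otimes 3}\ell(\thetab_T^\star)\right)(\ab^{\otimes 3}) = \frac 1n \sum_{i=1}^n \epsilon_i^2 \left( e^{-z_i} - 1 \right)(\ab'\xb_{i,T})^3,
\]
since the exponential factor at the perturbed point is $\epsilon_i^2 e^{-z_i}$. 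The whole argument then rests on showing this is uniformly small.

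First I would control the exponential increment. By (A1), $\max_i \|\xb_{i,T}\| = \opnorm{\Xb_T}{2,\infty} = O(\sqrt{t \log p})$, so
\[
\max_{i\in[n],\, \|\tilde \thetab\| \le \delta} |z_i| \le \frac{\delta \sqrt t}{\sqrt n}\max_i \|\xb_{i,T}\| = O\left(\frac{\delta\, t \sqrt{\log p}}{\sqrt n}\right) = \tilde o(1),
\]
using $t = \tilde o(\sqrt n)$ and $\log p = \tilde O(1)$. In particular $|z_i| \le 1$ for $n$ large, so $|e^{-z_i} - 1| \le e|z_i|$ uniformly over $i$ and over $\tilde \thetab$ in the ball; this is where I confirm that $e^{-z_i}-1$ is genuinely first order.

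Next I would pass to absolute values, discarding the directional dependence with the crude but uniform bounds $|\ab'\xb_{i,T}| \le \|\xb_{i,T}\|$ and $|z_i| \le \frac{\delta \sqrt t}{\sqrt n}\|\xb_{i,T}\|$. Since the resulting estimate depends on neither $\ab$ nor $\tilde \thetab$, it dominates the full double supremum (the $\sup_{\|\ab\|=1}$ inside $\Lambda_{\max}$ and the outer $\sup_{\|\tilde\thetab\|\le\delta}$), so no covering/net argument is needed here, unlike in Lemma \ref{lem:hess_conc}:
\[
\sup_{\|\tilde \thetab\| \le \delta}\Lambda_{\max}\left(\frac 1n \nabla^{\otimes 3}\ell\left(\thetab_T^\star + \frac{\sqrt t}{\sqrt n}\tilde \thetab\right) - \frac 1n \nabla^{\otimes 3}\ell(\thetab_T^\star)\right) \le \frac{e\,\delta \sqrt t}{n^{3/2}}\sum_{i=1}^n \epsilon_i^2 \|\xb_{i,T}\|^4.
\]

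Finally I would bound the random sum. Applying Lemma \ref{lem:chi_squared} with $a_i = \|\xb_{i,T}\|^4 \ge 0$ yields $\sum_i \epsilon_i^2 \|\xb_{i,T}\|^4 = O_\PP\big(\sum_i \|\xb_{i,T}\|^4\big)$, and the deterministic sum is controlled by (A1) via $\sum_i \|\xb_{i,T}\|^4 \le (\max_i \|\xb_{i,T}\|^2)\sum_i \|\xb_{i,T}\|^2 = O(t\log p)\cdot O(nt) = O(nt^2 \log p)$. Combining, the supremum is
\[
O_\PP\left(\frac{\delta\, t^{5/2}\log p}{\sqrt n}\right) = t^{3/2}\cdot O_\PP\left(\frac{t \log p}{\sqrt n}\right) = \tilde o_\PP\left(t^{3/2}\right),
\]
because $\frac{t\log p}{\sqrt n} = \tilde o(1)\cdot \tilde O(1) = \tilde o(1)$. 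The only genuinely delicate points are verifying that the exponential increment is uniformly negligible and carefully tracking the $\tilde o/\tilde O$ calculus; the uniformity over $\ab$ and $\tilde\thetab$ that forced a net in Lemma \ref{lem:hess_conc} is absorbed for free here by the norm bounds, since the target rate $\tilde o_\PP(t^{3/2})$ is loose enough to tolerate the crude $\|\xb_{i,T}\|^4$ estimate.
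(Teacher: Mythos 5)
Your proposal is correct and follows essentially the same route as the paper: the same reduction to $\frac 1n \sum_i \epsilon_i^2 (e^{-z_i}-1)(\ab'\xb_{i,T})^3$, the same uniform control of $z_i$ via (A1) and $t = \tilde o(\sqrt n)$, $\log p = \tilde O(1)$, and the same observation that crude absolute-value bounds absorb both suprema without a net. The only (harmless) difference is bookkeeping: you retain the linear factor $|e^{-z_i}-1| \le e|z_i|$ and apply the $\chi^2$ concentration lemma to the fourth-moment sum $\sum_i \epsilon_i^2\|\xb_{i,T}\|^4$, whereas the paper simply uses $|e^{-z_i}-1| = \tilde o(1)$ together with $\max_i \epsilon_i^2 = \tilde O_\PP(1)$ and the third-moment bound $\sum_i\|\xb_{i,T}\|^3 = O(nt^{3/2})$ from (A1).
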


\begin{proof}
Let $\| \ab \| = 1$,
\[
\frac 1n \left( \nabla^{\otimes 3} \ell (\thetab_T^\star + \frac{\sqrt t}{\sqrt n} \tilde \thetab) - \nabla^{\otimes 3} \ell (\thetab_T^\star) \right) (\ab) = \frac 1n \sum_{i=1}^n \epsilon_i^2 \left( e^{- \frac{\sqrt t}{\sqrt n} \xb_{i,T}'\tilde \thetab} - 1\right) (\xb_{i,T}'\ab)^3 .
\]
Furthermore,
\[
\left| \frac{\sqrt t}{\sqrt n} \xb_{i,T}'\tilde \thetab \right| \le \delta \frac{\sqrt t}{\sqrt n} \max_{i \in [n]} \|  \xb_{i,T} \| = \tilde o(\delta t \sqrt{\log p}/ \sqrt n) = \tilde o (1).
\]
uniformly over $\tilde \thetab$ by assumption (A1).
Thus uniformly over $\| \ab \| = 1$,
\[
\frac 1n \sum_{i=1}^n \epsilon_i^2 \left( e^{-\alpha \frac{\sqrt t}{\sqrt n} \xb_{i,T}'\tilde \thetab} - 1\right) (\xb_{i,T}'\ab)^3 = \tilde o\left((\max_{i \in [n]} \epsilon_i^2) \frac 1n \sum_{i=1}^n \|\xb_{i,T}\|^3 \right) = \tilde o_\PP \left( t^{3/2} \right)
\]
because $\max_{i \in [n]} |\epsilon_i| = \tilde O_\PP(1)$ and by assumption (A1).
\end{proof}

We can Taylor expand the likelihood around $\thetab_T^\star$, (the mean value form where for some $\alpha \in [0,1]$)
\[
\begin{aligned}
&\frac{1}{t} \ell \left( \thetab_T^\star + \sqrt{\frac tn} \tilde \thetab \right) = \frac{1}{\sqrt{tn}} \ub(\thetab_T^\star)' \tilde \thetab + \frac{1}{2n} \tilde \thetab' \Vb(\thetab_T^\star) \tilde \thetab + \frac{\sqrt t}{6n \sqrt{n}} \left( \nabla^{\otimes 3} \ell (\thetab_T^\star + \alpha \sqrt{t/n} \tilde \thetab) \right) (\tilde \thetab) \\
&\frac{1}{t} \ell \left( \thetab_T^\star + \sqrt{\frac tn} \tilde \thetab \right) = \frac{1}{\sqrt{tn}} \ub(\thetab_T^\star)' \tilde \thetab + \frac{1}{2} \tilde \thetab' \hat \Sigma_{TT} \tilde \thetab + r(\tilde \thetab) \\
&\textrm{ where } r(\tilde \thetab) = \frac{1}{2n} \tilde \thetab' \left(\Vb(\thetab_T^\star) - n \hat \Sigma_{TT}\right) \tilde \thetab + \frac{\sqrt t}{6n \sqrt{n}} \left( \nabla^{\otimes 3} \ell (\thetab_T^\star + \alpha \sqrt{t/n} \tilde \thetab) \right) (\tilde \thetab).
\end{aligned}
\]
By showing that the remainder term is uniformly small, we will use the fact that if the likelihood is close to its quadratic approximation (in infinity norm) relative to their curvature of the likelihood then their optima are close.
Fix $\delta > 0$, set $\Acal$ to be the unit ball in $\RR^T$, and define $\Delta(\delta)$ as in Lemma \ref{lem:pollard}.
Then under the (A1),(A2) by Lemma \ref{lem:hess_conc},
\[
\begin{aligned}
&\Lambda_{\max} (\Vb(\thetab_T^\star) - n \hat \Sigma_{TT}) = O_\PP(\sqrt{tn} + t^{3/2}) \\
&\Lambda_{\max} (\nabla^{\otimes 3} \ell (\thetab_T^\star) - n\hat \Sigma^{(3)}_{T}) = O_\PP(\sqrt{n t} + t^2).\\
\end{aligned}
\]
By Lemma \ref{lem:third_term},
\[
\sup_{\|\tilde \thetab\| \le \delta}\Lambda_{\max} (\nabla^{\otimes 3} \ell (\thetab_T^\star + \alpha \sqrt{t/n} \tilde \thetab) - \nabla^{\otimes 3} \ell (\thetab_T^\star )) = \tilde o_\PP(t^{3/2}).
\]
Further by assumption (A2),
\[
\Lambda_{\max} \left( \hat \Sigma_T^{(3)} \right) = O(1).
\]
Combining these observations,
\[
\Delta(\delta) = O_\PP\left(\frac{\sqrt t}{\sqrt n} + \frac{t^{3/2}}{n} + \frac{t}{n} + \frac{t^{5/2} + t^2}{n^{3/2}}\right) = \tilde o_\PP (1), \quad \Lambda_{\min} (\hat \Sigma_{TT} ) = \Omega(1)
\]
because $t = \tilde o(\sqrt n)$. 

Now let us verify that 
\begin{equation}
\label{eq:error_to_grad}
\left\| \frac{1}{\sqrt{tn}} \ub(\thetab_T^\star) \right\| = O_\PP(1).
\end{equation}
Consider 
\[
\left\| \hat \Sigma_{TT}^{-1} \ub(\thetab_T^\star) \right\| = \left\| \sum_{i = 1}^n (1 - \epsilon_i^2) \hat \Sigma^{-1}_{TT} \xb_{i,T} \right\|.
\]
By an identical argument to that used in Lemma \ref{lem:chi_squared},
\[
\begin{aligned}
&\left\| \sum_{i = 1}^n (1 - \epsilon_i^2) \hat \Sigma^{-1}_{TT} \xb_{i,T} \right\| = O_\PP \left(\sqrt{\sum_{i=1}^n \|\hat \Sigma_{TT}^{-1} \xb_{i,T}\|^{2} } + \max_{i \in [n]} \|\hat \Sigma_{TT}^{-1} \xb_{i,T}\| \right) \\  
& = O_\PP \left(\sqrt{\sum_{i=1}^n \| \xb_{i,T}\|^{2} } + \max_{i \in [n]} \| \xb_{i,T}\| \right) = O_\PP (\sqrt{tn})
\end{aligned}
\]
by (A1), (A2), and the fact that $t = \tilde o(\sqrt n)$.

By Lemma \ref{lem:pollard},
\begin{equation}
\label{eq:error_to_grad}
\left\| \sqrt{\frac{n}{t}} (\hat \thetab_T - \thetab_T^\star) - \frac{1}{\sqrt{tn}}(- \hat \Sigma_{TT}^{-1} \ub(\thetab_T^\star)) \right\| = o_\PP(1).
\end{equation}
Further, this implies that 
\[
\left\| \sqrt{\frac{n}{t}} (\hat \thetab_T - \thetab_T^\star) \right\| = O_\PP(1).
\]

\subsubsection{Oracle property \eqref{eq:known_mean_oracle1} for $\hat \thetab_T$.}
We will consider the same expansion of the likelihood as in the previous proof, except that the additional $\sqrt t$ factors are removed.
Let $\tilde \thetab \in \RR^T$.  For some $\alpha \in [0,1]$,
\[
\begin{aligned}
& \ell \left( \thetab_T^\star + \sqrt{\frac 1n} \tilde \thetab \right) = \frac{1}{\sqrt{n}} \ub(\thetab_T^\star)' \tilde \thetab + \frac{1}{2n} \tilde \thetab' \Vb(\thetab_T^\star) \tilde \thetab + \frac{1}{6n \sqrt{n}} \left( \nabla^{\otimes 3} \ell (\thetab_T^\star + \alpha \sqrt{t/n} \tilde \thetab ) \right) (\tilde \thetab) \\
& \ell \left( \thetab_T^\star + \sqrt{\frac 1n} \tilde \thetab \right) = \frac{1}{\sqrt{n}} \ub(\thetab_T^\star)' \tilde \thetab + \frac{1}{2} \tilde \thetab' \hat \Sigma_{TT} \tilde \thetab + r(\tilde \thetab) \\
&\textrm{ where } r(\tilde \thetab) = \frac{1}{2n} \tilde \thetab' \left(\Vb(\thetab_T^\star) - n \hat \Sigma_{TT}\right) \tilde \thetab + \frac{1}{6n \sqrt{n}} \left( \nabla^{\otimes 3} \ell (\thetab_T^\star+ \alpha \sqrt{1/n} \tilde \thetab) \right) (\tilde \thetab).
\end{aligned}
\]
The above proof shows that $r(\tilde \thetab) = \tilde o_\PP(1)$ (since it proves a stronger statement) for $\| \tilde \thetab \| \le \delta$.
Hence, we can employ Lemma \ref{lem:pollard} with $\Acal = \{\ab\}$ to show that,
\[
\left| \sqrt n \ab'(\hat \thetab - \thetab^\star) - \frac{1}{\sqrt n} \left( - \ab' \hat \Sigma^{-1}_{TT} \ub(\thetab^\star) \right) \right| = o_\PP(1)
\]
Because 
\[
\left| \frac{1}{\sqrt n} \ab' \ub(\thetab^\star) \right| = O_\PP(1), \quad \underline \lambda = \ab' \hat \Sigma_{TT} \ab
\]
The rest follows from the CLT.

\subsubsection{Oracle property \eqref{eq:known_mean_oracle2} for $\hat \thetab_T$.}
This is proven similarly to the previous lemmata, except with $\Acal = \{ \xb_{j,T} : j \in [n] \}$.
By Lemma \ref{lem:chi_squared},
\[
\begin{aligned}
&\frac{1}{\sqrt n}\xb_{j,T}' \hat \Sigma_{TT}^{-1} \ub(\thetab_T^\star) = \frac{1}{\sqrt n}\sum_{i = 1}^n (1 - \epsilon_i^2) \xb_{j,T}' \hat \Sigma_{TT}^{-1} \xb_{i,T} \\
&\le 2\left(\sqrt{\frac 1n \sum_{i = 1}^n (\xb_{j,T}'\hat \Sigma_{TT}^{-1} \xb_{i,T})^2 \log (1/\delta) } + \frac{1}{\sqrt n} \max_{i \in [n]} |\xb_{j,T}' \hat \Sigma_{TT}^{-1}\xb_{i,T}| \log (1/\delta) \right) \\
&\le 2\left(\sqrt{\xb_{j,T}'\hat \Sigma_{TT}^{-1} \xb_{j,T} \log (1/\delta) } + \frac{1}{\sqrt n}\max_{i \in [n]} |\xb_{j,T}' \hat \Sigma_{TT}^{-1}\xb_{i,T}| \log (1/\delta) \right).
\end{aligned}
\]
Thus, setting
\[
\begin{aligned}
&v = \max_{j \in [n]} \left| \frac{1}{\sqrt n}\xb_{j,T}' \hat \Sigma_{TT}^{-1} \ub(\thetab_T^\star) \right| = O_\PP \left( \max_{j \in [n]} \sqrt{\xb_{j,T}'\hat \Sigma_{TT}^{-1} \xb_{j,T} \log n } + \frac{1}{\sqrt n}\max_{i \in [n]} |\xb_{j,T}' \hat \Sigma_{TT}^{-1}\xb_{i,T}| \log n \right)\\  
&= O_\PP \left( \sqrt{t \log n} + \frac{t}{\sqrt n} \log n\right) = O_\PP(\sqrt{t \log n}).
\end{aligned}
\]
The above display follows from (A1) and (A2).
Perform the likelihood expansion as in the previous proofs, ($\alpha \in [0,1]$)
\[
\begin{aligned}
&\frac{1}{v^2} \ell \left( \thetab_T^\star + \frac{v}{\sqrt n} \tilde \thetab \right) = \frac{1}{v \sqrt{n}} \ub(\thetab_T^\star)' \tilde \thetab + \frac{1}{2} \tilde \thetab' \hat \Sigma_{TT} \tilde \thetab + r(\tilde \thetab) \\
&\textrm{ where } r(\tilde \thetab) = \frac{1}{2n} \tilde \thetab' \left(\Vb(\thetab_T^\star) - n \hat \Sigma_{TT}\right) \tilde \thetab + \frac{v}{6n \sqrt{n}} \left( \nabla^{\otimes 3} \ell (\thetab_T ^\star+ \alpha \frac{\sqrt t}{\sqrt n} \tilde \thetab) \right) (\tilde \thetab).
\end{aligned}
\]
As in Lemma \ref{lem:pollard}, set $\delta > 0$, and we have that
\[
\Delta(\delta) = O_\PP\left( \frac{1}{2n} \Lambda_{\max}(\Vb(\thetab_T^\star) - n \hat \Sigma_{TT}) + \frac{v}{6n \sqrt{n}} \Lambda_{\max}(\nabla^{\otimes 3} \ell (\thetab_T^\star+\alpha \frac{\sqrt t}{\sqrt n} \tilde \thetab)) \right)
\]
while $v$ is defined so that
\[
\left\| \frac{1}{v \sqrt{n}} \hat \Sigma^{-1}_{TT} \ub(\thetab_T^\star) \right\|_\Acal = O_\PP(1).
\]
By the proof of oracle property \eqref{eq:known_mean_oracle1},
\[
\Delta(\delta) = \tilde o_\PP(1), \quad \Lambda_{\min} (\hat \Sigma_{TT} ) = \Omega(1).
\]
So, if $t = \tilde o(\sqrt n)$, by Lemma \ref{lem:pollard},
\[
\left\| \frac{\sqrt n}{v} (\hat \thetab_T - \thetab_T^\star) - \frac{1}{v \sqrt{n}}(- \hat \Sigma_{TT}^{-1} \ub(\thetab_T^\star)) \right\|_\Acal = o_\PP(1).
\]
Hence,
\[
\max_j \sqrt n |\xb_{j,T}' (\hat \thetab_T - \thetab_T^\star)| = O_\PP(v) = O_\PP \left( \max_{j \in [n]} \sqrt{\xb_{j,T}'\hat \Sigma_{TT}^{-1} \xb_{j,T} \log n } + \frac{1}{\sqrt n}\max_{i \in [n]} |\xb_{j,T}' \hat \Sigma_{TT}^{-1}\xb_{i,T}| \log n \right).
\]

\subsubsection{$\hat \thetab$ is the penalized MLE.}
The estimate $\hat \thetab$ such that $\hat \thetab_T$ is the OMLE (with knowledge of $T$), and $\hat \thetab_{T^C} = \zero$ is a local minimizer of \eqref{eq:stage2} if the following hold (which are precisely the zero-subgradient conditions),
\begin{equation}
\label{eq:var_SCAD_1}
\sum_{i=1}^n (1 - \eta_i^2 e^{-\xb_i' \hat{\thetab}}) x_{i,j} - n \textrm{sgn}(\hat \theta_j) \rho'_{\lambdab^T}(|\hat{\theta}_j|) = 0, \textrm{ if } \theta_j \ne 0
\end{equation}
\begin{equation}
\label{eq:var_SCAD_2}
|\sum_{i=1}^n (1 - \eta_i^2 e^{-\xb_i' \hat{\thetab}}) x_{i,j}| < n \rho_{\lambdab^T}'(0+), \textrm{ if } \theta_j = 0.
\end{equation}

We will focus on \eqref{eq:var_SCAD_2}.
Specifically, we would like to show that for $j \in T^C$,
\[
|\sum_{i=1}^n (1 - \eta_i^2 e^{-\xb_i' \hat{\thetab}}) x_{i,j}| = o_\PP \left( n \lambda^T_j \right)
\]
where recall $\lambda_j^T = \lambda_T \| \Xb_j \| / n$.
We can decompose this term into the following,
\[
\begin{aligned}
\sum_{i=1}^n \left( 1 - \eta_i^2 e^{-\xb_i' \hat{\thetab}} \right) x_{i,j} = \sum_{i=1}^n \left( 1 - \epsilon_i^2 e^{-\xb_i' (\hat{\thetab} - \thetab^\star)} \right) x_{i,j} = \sum_{i=1}^n \left( 1 - e^{-\xb_i' (\hat{\thetab} - \thetab^\star)} \right) x_{i,j} + \left( (1 - \epsilon_i^2) e^{-\xb_i' (\hat{\thetab} - \thetab^\star)} \right) x_{i,j}.
\end{aligned}
\]
We can further decompose the second term,
\[
\sum_{i = 1}^n \left( (1 - \epsilon_i^2) e^{-\xb_i' (\hat{\thetab} - \thetab^\star)} \right) x_{i,j} = \sum_{i = 1}^n (1 - \epsilon_i^2) \left( e^{-\xb_i' (\hat{\thetab} - \thetab^\star)} - 1 \right) x_{i,j} + \sum_{i = 1}^n (1 - \epsilon_i^2) x_{i,j}.
\]
Let us label these terms,
\[
\begin{aligned}
&A_1 = \sum_{i=1}^n \left( 1 - e^{-\xb_i' (\hat{\thetab} - \thetab^\star)} \right) x_{i,j}\\
&A_2 = \sum_{i=1}^n (1 - \epsilon_i^2) \left( e^{-\xb_i' (\hat{\thetab} - \thetab^\star)} - 1 \right) x_{i,j} \\
&A_3 = \sum_{i=1}^n  (1 - \epsilon_i^2) x_{i,j}.
\end{aligned}
\]

Let us begin with the first term ($A_1$).  
By the mean value theorem for each $i$
\[
 \frac 12 (\xb_i' (\hat{\thetab} - \thetab^\star))^2  e^{-|\xb_i' (\hat{\thetab} - \thetab^\star)|} \le 1 - e^{-\xb_i' (\hat{\thetab} - \thetab^\star)} - \xb_i' (\hat{\thetab} - \thetab^\star) \le \frac 12 (\xb_i' (\hat{\thetab} - \thetab^\star))^2 e^{|\xb_i' (\hat{\thetab} - \thetab^\star)|}.
\]
While 
\[
\max_{i \in [n]} |\xb_i' (\hat{\thetab} - \thetab^\star)| \le \max_{i \in [n]} \|\xb_{i,T}\| \delta \le \delta \sqrt{t \log p}
\]
where $\delta = \| \hat \thetab - \thetab^\star \| = O_\PP(\sqrt{t/n})$ by \eqref{eq:known_mean_oracle3}.
Hence, 
\begin{equation}
\label{eq:pred_theta}
\max_{i \in [n]} |\xb_i' (\hat{\thetab} - \thetab^\star)| = o_\PP(1).
\end{equation}
Therefore, uniformly over $j \in T^C$,
\[
\sum_{i=1}^n \left( 1 - e^{-\xb_i' (\hat{\thetab} - \thetab^\star)} \right) x_{i,j} = \sum_{i = 1}^n \xb_i' (\hat{\thetab} - \thetab^\star) x_{i,j} + \left( \frac 12 \sum_{i = 1}^n (\xb_i' (\hat{\thetab} - \thetab^\star))^2 x_{i,j} \right) (1 + o_\PP(1)).
\]

We will first bound the term
\[
\sum_{i = 1}^n \xb_i' (\hat \thetab - \thetab^\star) \frac{x_{i,j}}{ \|\Xb_j\|}.
\]
Define the following
\[
\bb = \frac 1n \sum_{i = 1}^n (\epsilon_i^2 - 1) \hat \Sigma_{TT}^{-1} \xb_{i,T}
\]
then we have that
\[
\| (\hat \thetab - \thetab) - \bb \| = O_\PP(\sqrt{t/n})
\]
by \eqref{eq:error_to_grad}.
Let $\alphab_j = \Xb_j / \| \Xb_j \|$, then by Cauchy-Schwartz
\[
\begin{aligned}
\max_{j \in T^C} \left| \sum_{i = 1}^n \xb_i' (\hat \thetab - \thetab^\star) \alpha_{j,i} - n^{-1} \sum_{i_1, i_2=1}^n \alpha_{j,i_1} (\epsilon_{i_2}^2 - 1) \xb_{i_1, T}' \hat \Sigma_{TT}^{-1} \xb_{i_2,T} \right| \\
\le \max_{j \in T^C} \| (\hat \thetab - \thetab) - \bb \| \| \Xb_T \alphab_j \| = \tilde O_\PP( n^{1/4} \sqrt{t/n}) = \tilde o_\PP(1)
\end{aligned}
\]
by \eqref{eq:incoherence} and the fact that $t = \tilde o(\sqrt n)$.
Define 
\[
\Pb_T = \Xb_T' (\Xb_T \Xb_T')^{-1} \Xb_T
\]
so that 
\[
n^{-1} \sum_{i_1, i_2=1}^n \alpha_{j,i_1} (\epsilon_{i_2}^2 - 1) \xb_{i_1, T}' \hat \Sigma_{TT}^{-1} \xb_{i_2,T} = (\epsilonb^2 - \one)' \Pb_T \alphab_j
\]
and
\[
\max_{j \in T^C} | (\epsilonb^2 - \one)' \Pb_T \alphab_j | = O_\PP \left( \max_{j \in T^C} \| \Pb_T \alphab_j \| \sqrt{\log p} \right) + O_\PP \left( \max_{j\in T^C} \| \Pb_T \alphab_j \|_\infty \log p \right)
\]
by Lemma \ref{lem:chi_squared} combined with the union bound.
Considering the first term, we have
\[
\max_{j \in T^C} \left\| \Pb_T \alphab_j \right\| = n^{-1/2} \Lambda_{\max}(\hat \Sigma_{TT}^{-1/2}) \max_{j \in T^C} \| \Xb_T \alphab_j \|= \tilde O_\PP (n^{-1/4}) = \tilde o_\PP(1)
\]
by \eqref{eq:incoherence} and (A2).
Considering the second term,
\[
\max_{j \in T^C} \| \Pb_T \alphab_j \|_\infty \le \max_{j \in T^C} \| \Pb_T \alphab_j \| = \tilde o(1).
\]
Hence,
\begin{equation}
\label{eq:bt1}
\left| \sum_{i = 1}^n \xb_i' (\hat \thetab - \thetab^\star) \frac{x_{i,j}}{ \|\Xb_j\|} \right| = \tilde o_\PP(1).
\end{equation}
Furthermore, by \eqref{eq:known_mean_oracle1}
\[
\max_{i \in [n]} |\xb_i' (\hat\thetab - \thetab^\star)| = \tilde O_\PP( \sqrt {t/n} )
\]
and so
\[
\sum_{i = 1}^n (\xb_i' (\hat{\thetab} - \thetab^\star))^2 \frac{x_{i,j}}{\| \Xb_j\|} \le \sqrt n \max_{i \in [n]} |\xb_i' (\hat \thetab - \thetab^\star)|^2 = \tilde O_\PP(t / \sqrt n) = o_\PP(1).
\]
In conclusion, $A_1 / \| \Xb_j \| = o_\PP(\sqrt{\log n})$ uniformly in $j\in T^C$.

Focusing on the term $A_3$, by Lemma \ref{lem:chi_squared} and the union bound,
\[
\sup_{j \in T^C} \sum_{i = 1}^n (1 - \epsilon_i^2) \alpha_{i,j} = O_\PP \left( \sqrt{\sum_{i = 1}^n \alpha_{i,j}^2 \log p} + \max_{i,j} |\alpha_{i,j}| \log p \right).
\]
Because $\max_{i,j} \alpha_{i,j} = O(\log^{-1/2} p)$, $A_3 = O_\PP( \sqrt{\log p} )$.

We now focus on $A_2$.
Let $\delta > 0$ depend on $n$, such that $\delta = O(\sqrt{t/n})$ and $\PP \{ \| \hat \thetab - \thetab^\star \| > \delta \} \le \eta_0$ for some fixed $\eta_0 > 0$.
By the mean value theorem,
\[
\max_{i \in [n]}\left| e^{-\xb_i' (\hat{\thetab} - \thetab^\star)} - 1\right|  \le \max_{i \in [n]} |\xb_i'(\hat \thetab - \thetab^\star)| e^{|\xb_i'(\hat \thetab - \thetab^\star)|} = O_\PP (\delta \sqrt{t \log n} )
\]
since by \eqref{eq:pred_theta}, $e^{|\xb_i'(\hat \thetab - \thetab^\star)|} = 1 + o_\PP(1)$.
We will provide a covering argument that controls,
\[
f_j(\thetab) = \sum_{i = 1}^n (1 - \epsilon_i^2) \left( e^{-\xb_i' (\hat{\thetab} - \thetab^\star)} - 1 \right) \frac{x_{i,j}}{\|\Xb_j\|} 
\]
uniformly over $j \in T^C$ and $\Theta = \{ \thetab \in \RR^T : \| \thetab - \thetab^\star \| \le \delta\}$.
Define $\alpha_{i,j} = x_{i,j} / \| \Xb_j\|$.
Consider a pair $\thetab_0, \thetab_1 \in \Theta$,
\[
\begin{aligned}
&\max_{j \in T^C} |f_j(\thetab_0) - f_j(\thetab_1)| \le \max_{j \in T^C} \left| \sum_{i = 1}^n (1 - \epsilon_i^2) \left( e^{-\xb_i' (\thetab_0 - \thetab^\star)} - e^{-\xb_i' (\thetab_1 - \thetab^\star)} \right) \alpha_{i,j} \right| \\
&\le \left( \max_{j \in T^C} \sum_{i = 1}^n |1 - \epsilon_i^2| |\alpha_{i,j}| \right) \max_{i \in [n]} \left| e^{-\xb_i' (\thetab_0 - \thetab^\star)} - e^{-\xb_i' (\thetab_1 - \thetab^\star)} \right|.
\end{aligned}
\]
Now consider balls of radius $\gamma$, and let $a = \max_{i \in [n]} \|\xb_{i,T}\| \delta$, then by the mean value theorem,
\[
\begin{aligned}
&\sup_{\|\thetab_0 - \thetab_1\| \le \gamma} \max_{i \in [n]} \left| e^{-\xb_i' (\thetab_0 - \thetab^\star)} - e^{-\xb_i' (\thetab_1 - \thetab^\star)} \right| 
\le \sup_{\|\thetab_0 - \thetab_1\| \le \gamma} \max_{i \in [n]} e^{|\xb_i' (\thetab_1 - \thetab^\star)|} \left| e^{\xb_i' (\thetab_1 - \thetab_0)} - 1 \right| \\
&= e^a \sup_{\|\thetab_0 - \thetab_1\| \le \gamma} \max_{i \in [n]} | \xb_i' (\thetab_1 - \thetab_0) |e^{|\xb_i' (\thetab_1 - \thetab_0)|} 
\le e^{3a} \gamma \max_{i \in [n]} \| \xb_{i,T} \| = O( \gamma \sqrt{t \log n} )
\end{aligned}
\]
because we have shown that $a = o(1)$.
By Cauchy-Schwartz and the LLN,
\[
\max_{j \in T^C} \sum_{i = 1}^n |1 - \epsilon_i^2| |\alpha_{i,j}| \le \sqrt{ \sum_{i = 1}^n (1 - \epsilon_i^2)^2 } = O_\PP(\sqrt n).
\]
By assumption (A1) if $\gamma = o((\sqrt{t n \log n})^{-1})$, then
\[
\sup_{\|\thetab_0 - \thetab_1\| \le \gamma} \max_{j \in T^C} |f_j(\thetab_0) - f_j(\thetab_1)| = o_\PP(1)
\]
Now we know that we can cover $\Theta$ with an entropy of $O(t \log n)$ by Lemma \ref{lem:VDG_cover}.
For each center in the covering ($\thetab$), with probability $\eta > 0$, by Lemma \ref{lem:chi_squared},
\[
\begin{aligned}
&f_j(\thetab) = \sum_{i = 1}^n (1 - \epsilon_i^2) \left( e^{-\xb_i' (\thetab - \thetab^\star)} - 1 \right) x_{i,j} \\
&\le 2\sqrt{\sum_{i = 1}^n \left( e^{-\xb_i' (\thetab - \thetab^\star)} - 1 \right)^2 \alpha_{i,j}^2 \log (1 / \eta)} + 2\max_{i \in [n]} \left| e^{-\xb_i' (\thetab - \thetab^\star)} - 1 \right| |\alpha_{i,j}| \log (1 / \eta) \\
& \le 4 \max_{i \in [n]} \left| e^{-\xb_i' (\thetab - \thetab^\star)} - 1 \right| \left( \sqrt{\log (1 / \eta)} + \max_{i,j}|\alpha_{i,j}| \log (1/\eta) \right).
\end{aligned}
\]
We have that the convergence in \eqref{eq:pred_theta} is uniform over $\hat \thetab \in \Theta$, so
\[
\sup_{\thetab \in \Theta}|e^{-\xb_i' (\thetab - \thetab^\star)} - 1| = O \left(\max_{i \in [n]}\sup_{\thetab \in \Theta}| \xb_i' (\thetab - \thetab^\star)|\right) = O_\PP( \sqrt{t \log n} / \sqrt n ).
\]
Recall that we have assumed that $\max_{i,j} |\alpha_{i,j}| = \tilde o(n^{1/2} / t^{3/2})$ and $\log p = \tilde O(1)$.
Setting $\eta$ such that $\log(1/\eta) \propto t \log n + \log p$ then
\[
\max_{j \in [p]} \sup_{\thetab} f_j(\thetab) = \tilde O_\PP \left( \sqrt{\frac{t \log n}{n}} \left( \sqrt{t \log n} + \tilde o \left( \frac{\sqrt n }{t \sqrt t} \right) t \log n \right) \right) = \tilde O_\PP\left( \frac{t}{\sqrt n} + \frac{t^2}{n} \right) = \tilde o_\PP(1)
\]
where the supremum in $\thetab$ is over the cover centers.
Because this is higher order than the differences within balls in the cover, we have that 
\[
\max_{j \in [p]} \sup_{\thetab \in \Thetab} f_j(\thetab) = o_\PP(\sqrt{\log p})
\]
because $\eta_0$ (which was a function of $\delta$) can be set to be arbitrarily small.
In conclusion $A_3 / \| \Xb_j \| = o_\PP(\sqrt{\log p})$ uniformly in $j$, which implies \eqref{eq:var_SCAD_2} for $\lambda_T = \omega(\sqrt{\log p})$.

In order to show \eqref{eq:var_SCAD_1}, we can demonstrate that 
\[
\| \hat \thetab_T - \thetab^\star_T \|_\infty = O_\PP\left( \sqrt{\frac{\log p}{n}} \right)
\]
by identical procedures to the proofs of \oraclek~.
Furthermore, for $j \in T$,
\[
\Lambda_{\min} (\hat \Sigma_{TT}) \le n^{-1} \| \Xb_j \|^2 \le \Lambda_{\max} (\hat \Sigma_{TT}).
\]
Hence, if 
\[
\theta^\star_j = \omega\left( \frac{\sqrt{\log p}}{\sqrt n} \right), \quad \forall j \in T
\]
then 
\[
\hat \theta_j = \omega\left( \frac{\sqrt{\log p}}{\sqrt n} \right) = \omega\left( \frac{\sqrt{\log p}}{n} \|\Xb_j\| \right), \quad \forall j \in T
\]
and there is a sequence $\lambda_T$ such that \eqref{eq:var_SCAD_1} holds. 
This demonstrates that $\hat \thetab$ is the PMLE.

\subsection{Proof of Theorem \ref{thm:unknown_mean}}
We now consider $\hat \betab \ne \betab^\star$, but rather it satisfies \eqref{eq:hat_beta}.
We will analyze the oracle pseudo-likelihood setting (the OMPLE).
Throughout this section, we will make the following assumption about the performance of stage 1
\begin{equation}
  \label{eq:A2_4}
  \hat \betab \in \Bcal = \{\betab \in \RR^p : \|\betab\|_0 \le \hat s, \| \Xb (\betab^\star - \betab) \|^2 \le r \}
\end{equation}
for some $\hat s = \tilde o(\sqrt n)$ and $r = O(\hat s)$ as is guaranteed by \eqref{eq:hat_beta}.
The negative log-likelihood and negative log-pseudo-likelihood are given by
\[
\begin{aligned}
\ell(\thetab) = &\sum_{i = 1}^n \log \sigma_i(\thetab)^2 + \frac{(y_i - \xb_i' \betab)^2}{\sigma_i(\thetab)^2} = \sum_{i = 1}^n \log \sigma_i(\thetab)^2 + \epsilon_i^2 e^{\xb_i'(\thetab^\star - \thetab)}\\
\hat \ell(\thetab) = &\sum_{i = 1}^n \log \sigma_i(\thetab)^2 + \frac{(y_i - \xb_i' \hat \betab)^2}{\sigma_i(\thetab)^2} = \sum_{i = 1}^n \log \sigma_i(\thetab)^2 + \epsilon_i^2 e^{\xb_i'(\thetab^\star - \thetab)}\\
& + (\xb_i'(\betab^\star - \hat \betab))^2 e^{-\xb_i'\thetab} + 2 \epsilon_i \xb_i'(\hat \betab - \betab) e^{\frac 12 \xb_i'(\thetab^\star - \thetab)}
\end{aligned}
\]
We will augment the pseudo-likelihood by introducing constants in $\thetab$ which does not affect the minimizer,
\[
\begin{aligned}
&\hat \ell'(\thetab) = \sum_{i = 1}^n \log \sigma_i(\thetab)^2 + \frac{(y_i - \xb_i' \hat \betab)^2}{\sigma_i(\thetab)^2} = \sum_{i = 1}^n \log \sigma_i(\thetab)^2 + \epsilon_i^2 e^{\xb_i'(\thetab^\star - \thetab)}\\
& + (\xb_i'(\betab^\star - \hat \betab))^2 \left(e^{-\xb_i'\thetab} - e^{-\xb_i'\thetab^\star} \right) + 2 \epsilon_i \xb_i'(\hat \betab - \betab ) \left( e^{\frac 12 \xb_i'(\thetab^\star - \thetab)} - 1 \right).
\end{aligned}
\]

\begin{lemma}
\label{lem:MPLE_diff}
Let $C > 0$ and define
\[
\Theta_C = \left\{ \thetab \in \RR^T : \| \Xb (\thetab - \thetab^\star) \|_\infty \le C \sqrt{\frac{t}{n} \log n}, \| \thetab - \thetab^\star \| \le C \sqrt{\frac tn} \right\}
\]
Then the likelihood difference is bounded by
\[
\sup_{\thetab \in \Theta_C} |\hat \ell'(\thetab) - \ell(\thetab)| = o_\PP (t)
\]
\end{lemma}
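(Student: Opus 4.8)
The plan is to subtract the two objectives and observe that the difference is exactly $\hat\ell'(\thetab)-\ell(\thetab) = B_1 + B_2$, where
\[
B_1 = \sum_{i=1}^n \left(\xb_i'(\betab^\star - \hat\betab)\right)^2\left(e^{-\xb_i'\thetab} - e^{-\xb_i'\thetab^\star}\right), \quad B_2 = 2\sum_{i=1}^n \epsilon_i\, \xb_i'(\hat\betab - \betab^\star)\left(e^{\frac12\xb_i'(\thetab^\star - \thetab)} - 1\right),
\]
and to bound each uniformly over $\thetab \in \Theta_C$. The recurring ingredient is that on $\Theta_C$ the exponents are uniformly negligible: the defining constraint $\|\Xb(\thetab - \thetab^\star)\|_\infty \le C\sqrt{(t/n)\log n}$ together with $t = \tilde o(\sqrt n)$ forces $\max_i |\xb_i'(\thetab - \thetab^\star)| = \tilde o(1)$, so a mean value theorem estimate (as in the $A_1$, $A_2$ analysis of Theorem \ref{thm:known_mean}) gives $\max_i|e^{-\xb_i'\thetab} - e^{-\xb_i'\thetab^\star}| = \tilde O(\sqrt{(t/n)\log n})$, and likewise $\max_i|e^{\frac12\xb_i'(\thetab^\star-\thetab)}-1| = \tilde O(\sqrt{(t/n)\log n})$; here I use $\underline\sigma^{-1} = \tilde O(1)$ to control $e^{-\xb_i'\thetab^\star}=\sigma_i^{-2}$.

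For $B_1$, a sum of nonnegative terms times a uniformly small factor, a direct bound suffices: factoring out $\max_i|e^{-\xb_i'\thetab} - e^{-\xb_i'\thetab^\star}| = \tilde O(\sqrt{(t/n)\log n})$ leaves $\sum_i (\xb_i'(\betab^\star - \hat\betab))^2 = \|\Xb(\betab^\star - \hat\betab)\|^2 = \tilde o_\PP(\sqrt n)$ by \eqref{eq:hat_beta}. Multiplying, $\sup_{\thetab \in \Theta_C} |B_1| = \tilde O_\PP(\sqrt t\, n^{-\gamma})$ for some $\gamma > 0$, which is $o_\PP(t)$.

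The main obstacle is $B_2$, a signed sum whose weights $w_i = \xb_i'(\hat\betab - \betab^\star)$ depend on the noise $\epsilon$. A crude Cauchy--Schwarz bound, replacing $\sum_i \epsilon_i^2$ by $O_\PP(n)$, yields only $\tilde O_\PP(\sqrt t\, n^{1/4})$, which is not $o(t)$ when $t = \tilde o(\sqrt n)$; the cancellation in the $\epsilon_i$ sum is essential. I would therefore pass to the deterministic class $\Bcal$ of \eqref{eq:A2_4} — legitimate since $\hat\betab \in \Bcal$ with probability tending to one — and bound $\sup_{\betab \in \Bcal,\, \thetab \in \Theta_C} |B_2(\betab,\thetab)|$ as the supremum of a Gaussian process linear in $\epsilon$. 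For a fixed pair $(\betab,\thetab)$, writing $g_i(\thetab) = e^{\frac12\xb_i'(\thetab^\star-\thetab)}-1$, the quantity $B_2$ is mean-zero Gaussian with variance $4\sum_i w_i^2 g_i(\thetab)^2 \le 4(\max_i g_i^2)(\sum_i w_i^2) = \tilde O(t\, n^{-1/2-\gamma})$, using $\sum_i w_i^2 \le r = \tilde o(\sqrt n)$ on $\Bcal$ and the $\max_i|g_i|$ estimate above; hence its standard deviation is $\tilde O(\sqrt t\, n^{-1/4-\gamma/2})$.

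To make this uniform I would combine a net over $\Theta_C$ (a Euclidean ball of radius $C\sqrt{t/n}$, entropy $\tilde O(t)$) with a union bound over the $(\hat s + s)$-sparse supports of $\betab-\betab^\star$ and Euclidean nets on each support, whose total metric entropy is $\lesssim (\hat s + s)\log p = \tilde o(\sqrt n)$ because $\log p = \tilde O(1)$. Applying the sub-Gaussian tail of Lemma \ref{lem:chi_squared} across this net, with the covering cardinality supplied by Lemma \ref{lem:VDG_cover}, inflates the standard deviation by $\sqrt{H} = \tilde O(n^{1/4})$. The crucial cancellation is that this inflation exactly offsets the $n^{-1/4}$ in the standard deviation, leaving $\sup|B_2| = \tilde O_\PP(\sqrt t\, n^{-\gamma/2}) = o_\PP(t)$. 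Increments of $B_2$ within a net cell are controlled by a Lipschitz estimate identical in spirit to the $f_j(\thetab)$ continuity argument in the proof of Theorem \ref{thm:known_mean}, so they are negligible at the required scale. Combining the bounds on $B_1$ and $B_2$ gives $\sup_{\thetab \in \Theta_C}|\hat\ell'(\thetab) - \ell(\thetab)| = o_\PP(t)$.
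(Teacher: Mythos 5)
Your proposal is correct and follows essentially the same route as the paper's own proof: the identical two-term decomposition, the same direct bound on the quadratic term via $\max_{i}|e^{-\xb_i'\thetab}-e^{-\xb_i'\thetab^\star}| = \tilde O\bigl(\sqrt{(t/n)\log n}\bigr)$ against $\|\Xb(\betab^\star-\hat\betab)\|^2 = \tilde o_\PP(\sqrt n)$, and the same treatment of the cross term by restricting to the deterministic class $\Bcal$, building a net over $\Bcal\times\Theta_C$ with metric entropy $\tilde O(\hat s+t)$, applying a pointwise Gaussian tail bound, and controlling increments within cells by a mean-value/Lipschitz estimate — reproducing the paper's bound $O_\PP\bigl(\|\Xb(\betab^\star-\hat\betab)\|\sqrt{(t/n)\log n}\,\sqrt{(\hat s+t)\log p}\bigr)=\tilde o_\PP(t)$. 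The only small correction: since the cross term is linear in $\epsilonb$, the pointwise tail is the plain Gaussian one (as in the paper's ``Gaussian concentration'' step), not Lemma~\ref{lem:chi_squared}, which concerns quadratic forms in $\epsilonb$.
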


\begin{proof}
The difference is 
\begin{equation}
\label{eq:l_diff}
\hat \ell'(\thetab) - \ell(\thetab) = \sum_{i = 1}^n (\xb_i'(\betab^\star - \hat \betab))^2 \left(e^{-\xb_i'\thetab} - e^{-\xb_i'\thetab^\star} \right) + 2 \epsilon_i \xb_i'(\hat \betab - \betab ) \left( e^{\frac 12 \xb_i'(\thetab^\star - \thetab)} - 1 \right).
\end{equation}

The first term will be controlled later by a perturbation arguments while the second term requires a covering argument.  
Consider the second term of \eqref{eq:l_diff},
\[
f(\thetab, \hat \betab) = \sum_{i=1}^n \epsilon_i \xb_i'(\hat \betab - \betab^\star) \left(e^{\frac 12 \xb_i'(\thetab^\star - \thetab)} - 1\right)
\]
as a function of $\thetab, \hat \betab$.
We are trying to argue that for any $\thetab \in \Theta$ and $\hat \betab \in \Bcal$, $f(\thetab,\hat \betab)$ is small.
Let $\thetab_0,\thetab_1 \in \Theta$ and $\betab_0, \betab_1 \in \Bcal$.
\[
\begin{aligned}
|f(\thetab_0,\betab_0) - f(\thetab_1,\betab_1)| = \left| \sum_{i=1}^n \epsilon_i \left( \xb_i'(\betab_0 - \betab^\star) e^{\frac 12 \xb_i'(\thetab^\star - \thetab_0)} - \xb_i'(\betab_1 - \betab^\star) e^{\frac 12 \xb_i'(\thetab^\star - \thetab_1)} + \xb_i'(\betab_0 - \betab_1)\right)\right|\\
\le \|\epsilonb\| \sqrt{\sum_{i=1}^n \left( \xb_i'(\betab_0 - \betab^\star) e^{\frac 12 \xb_i'(\thetab^\star - \thetab_0)} - \xb_i'(\betab_1 - \betab^\star) e^{\frac 12 \xb_i'(\thetab^\star - \thetab_1)} \right)^2} + \|\epsilonb\| \| \Xb (\betab_0 - \betab_1) \| \\
\end{aligned}
\]
Consider two pairs, $\deltab_0 = \betab^\star - \betab_0, \deltab_1 = \betab^\star - \betab_1$ and $\thetab_0, \thetab_1 \in \Theta$ and consider the difference in objectives,
\[
\begin{aligned}
&\left| \xb_i'\deltab_0 e^{\frac 12 \xb_i'(\thetab^\star - \thetab_0)} - \xb_i'\deltab_1 e^{\frac 12 \xb_i'(\thetab^\star - \thetab_1)} \right| \\
&\le \left| \xb_i'\deltab_0 \left( e^{\frac 12 \xb_i'(\thetab^\star - \thetab_0)} - e^{\frac 12 \xb_i'(\thetab^\star - \thetab_1)} \right) \right| + \left| \xb_i'(\deltab_0 - \deltab_1) e^{\frac 12 \xb_i'(\thetab^\star - \thetab_1)} \right| \\
&\le \frac 12 |\xb_i' \deltab_0| \| \xb_{i,T} \| \|\thetab_0 - \thetab_1\| e^{\frac 12 \xb_i' (\thetab^\star - \tilde \thetab_i)} + \left| \xb_i'(\deltab_0 - \deltab_1) \right| e^{\frac 12 \xb_i'(\thetab^\star - \thetab_1)}
\end{aligned}
\]
for some $\tilde \thetab_i$ on the segment between $\thetab_1$ and $\thetab_0$ by the mean value theorem.
Assume that $\| \Xb (\deltab_0 - \deltab_1) \| \le \gamma_\beta$.
Assume that $\| \thetab_0 - \thetab_1 \| \le \gamma_\theta$, and define $\overline \sigma^2(\Theta) = \max_{\thetab \in \Theta} \max_i e^{\xb_i'(\thetab^\star - \thetab)}$,
\[
\begin{aligned}
&\sum_{i = 1}^n \left| \xb_i'\deltab_0 e^{\frac 12 \xb_i'(\thetab^\star - \thetab_0)} - \xb_i'\deltab_1 e^{\frac 12 \xb_i'(\thetab^\star - \thetab_1)} \right|^2 \\
&\le \sum_{i=1}^n \frac{\gamma_\theta^2}{2} (\xb_i' \deltab_0)^2 \| \xb_{i,T} \|^2 e^{\xb_i' (\thetab^\star - \tilde \thetab_i)} + 2 \left( \xb_i'(\deltab_0 - \deltab_1) \right)^2 e^{\xb_i'(\thetab^\star - \thetab_1)}\\
&\le \frac{\gamma_\theta^2}{2}  \left(\max_i \| \xb_{i,T} \|^2 \max_{\tilde \thetab_i} e^{\xb_i' (\thetab^\star - \tilde \thetab_i)} \right) \|\Xb \deltab_0\|^2 + 2 \| \Xb (\deltab_0 - \deltab_1)\|^2 \max_i e^{\xb_i'(\thetab^\star - \thetab_1)}\\
&\le r \overline \sigma^2(\Theta) \frac{\gamma_\theta^2}{2} \opnorm{\Xb_T}{2,\infty}^2 + 2 \gamma_\beta^2 \overline \sigma^2(\Theta) = o( rt \gamma_\theta^2 \log n ) + o(\gamma_\beta^2)
\end{aligned}
\]
uniformly over such pairs ($\thetab_0,\thetab_1,\betab_0,\betab_1$) by (A1) and the fact that $\overline \sigma (\Theta) = 1 + o(1)$ (by similar reasoning as in the previous proofs).
Thus, if 
\[
\gamma^2_\theta = O\left( (r t \log n)^{-1} n^{-1/2} \right), \quad \gamma^2_\beta = O(n^{-1/2})
\]
then 
\[
\sup_{\| \thetab_0 - \thetab_1\| \le \gamma_\theta, \|\Xb (\deltab_0 - \deltab_1) \|_2 \le \gamma_\beta}\sum_{i = 1}^n \left| \xb_i'\deltab_0 e^{\frac 12 \xb_i'(\thetab^\star - \thetab_0)} - \xb_i'\deltab_1 e^{\frac 12 \xb_i'(\thetab^\star - \thetab_1)} \right|^2 = o(n^{-1/2})
\]
as well as $\|\Xb(\betab_0 - \betab_1) \| = o(n^{-1/2})$.
These conditions are satisfied by 
\[
\gamma_\theta = n^{-2}, \quad \gamma_\beta = n^{-1}.
\]
We are able to cover the space $\Bcal$ with $e^{O(\hat s \log p)}$ balls of radius $n^{-1}$.
We also can cover the space $\Theta$ with $e^{O(t \log p)}$ of radius $n^{-2}$, hence the metric entropy, $\log N(\gamma)$, is bounded by
\[
\log N(\gamma) = O\left( (\hat s + t) \log p \ \right).
\]
For a fixed $\hat \betab$ and $\thetab$, by Gaussian concentration with probability $1 - \eta$,
\[
\begin{aligned}
&f(\thetab, \hat \betab) = \sum_{i=1}^n \epsilon_i \xb_i'(\hat \betab - \betab^\star) \left( e^{\frac 12 \xb_i'(\thetab^\star - \thetab)} - 1 \right) \\
&\le \sqrt{2 \sum_{i=1}^n (\xb_i'(\betab^\star - \hat \betab))^2 \left( e^{\frac 12 \xb_i'(\thetab^\star - \thetab)} - 1 \right)^2 \log(1/\eta)}.
\end{aligned}
\]
Setting $\eta \propto N(\gamma)^{-1}$ (it can be shown that the first term is dominating based on (A1)),
\[
\sup_{\thetab \in \Theta, \hat \betab \in \Bcal} f(\thetab, \hat \betab) = O_\PP\left( \|\Xb (\betab^\star - \hat \betab) \| \sqrt{\frac{t \log n}{n}} \sqrt{(\hat s+t) \log p} \right) = \tilde o_\PP(t)
\]
by the fact that $r,s,t = \tilde o(\sqrt n)$ and $\log p = \tilde O(1)$.
The first term in the likelihood difference, \eqref{eq:l_diff}, is bounded by
\[
\begin{aligned}
& \| \Xb (\betab^\star - \hat \betab) \|^2 \sup_{\thetab \in \Theta} \max_{i \in [n]} \left|e^{-\xb_i' \thetab} - e^{-\xb_i'\thetab^\star} \right| \le \underline \sigma^{-2} \| \Xb (\betab^\star - \hat \betab) \|^2 \sup_{\thetab \in \Theta} \max_{i \in [n]} \left| e^{\xb_i' (\thetab^\star - \thetab)} - 1 \right|\\
& \le \tilde O_\PP \left( \underline \sigma^{-2} \| \Xb (\betab^\star - \hat \betab) \|^2 \sqrt{\frac{t \log n }{n}} \right) = o_\PP(\sqrt{t}) 
\end{aligned}
\] 
where $\underline \sigma = \min_i \sigma_i$ so that $\underline \sigma^{-2} = \tilde O(1)$.
\end{proof}

\eqref{eq:known_mean_oracle2} and (A1) imply
\[
\begin{aligned}
&\max_{i,j \in [n]} \sqrt{ \xb_{j,T}' \hat \Sigma_{TT}^{-1} \xb_{i,T}} = O \left( \| \Xb_T \|_{2,\infty} \right) = O(\sqrt{t \log n})\\
&\max_{j \in [n]} n |\xb_j'(\hat \thetab - \thetab^\star)| = O_\PP \left( \max_{j \in [n]} \left[ \sqrt{n \xb_{j,T}' \hat \Sigma_{TT}^{-1} \xb_{j,T} \log n} + \max_{i \in [n]} \xb_{j,T} \hat \Sigma_{TT}^{-1} \xb_{i,T} \log n\right] \right) \\
&= O_\PP \left( \sqrt{nt \log n} + t \log n \right) = O_\PP \left( \sqrt{nt} \log n \right)
\end{aligned}
\]
Thus for any $\thetab$ satisfying \oraclek~and for any $\delta > 0$ there exists a $C$ large enough such $\thetab \in \Theta_C$.  

We will now show that the OMPLE is close to the elliptical approximation in the previous subsection.
To distinguish between the OMPLE and the known-$\betab^\star$ MLE, let the OMPLE based on $\hat \betab$ be denoted $\hat \thetab_{\hat \betab}$.
Thus, the known-$\betab^\star$ MLE is denoted by $\hat \thetab_{\betab^\star}$.
Define the norm (for some $\ab$),
\[
\| \thetab \|_r = \left\| \frac{\sqrt n}{\sqrt t} \thetab \right\| + \left\| \frac{\sqrt n}{\sqrt{t \log n}} \Xb \thetab \right\|_\infty + |\ab'\thetab|
\]
Then by Theorem \ref{thm:known_mean}, $\| \hat \thetab_{\betab^\star} - \thetab^\star \|_r = O_\PP \left( 1 \right)$.
By Lemma \ref{lem:MPLE_diff}
\[
\sup_{\| \thetab - \thetab^\star \|_r \le \delta} |\frac 1t \hat \ell'(\thetab) - \frac 1t \ell(\thetab)| = o_\PP(1)
\]
while the curvature of $\ell$ has already been controlled in the previous subsection.
Thus, all of the conditions of Lemma 2 in \cite{pollard93} are satisfied implying,
\[
\| \hat \thetab - \hat \thetab_{\betab^\star} \|_r = o_\PP(1)
\]
This shows all of the oracle properties \oraclek~for the OMPLE.

\subsubsection{Penalized MPLE}

Consider again the pseudolikelihood with $\hat \betab \ne \betab^\star$, we will demonstrate that the OMPLE is a local minimizer of the penalized pseudo-likelihood, and is a PMPLE.
We are now concerned with the gradient,
\[
\sum_{i = 1}^n (1 - \hat \eta_i^2 e^{-\xb_i' \hat \thetab}) \alpha_{i,j} = \sum_{i = 1}^n (1 - \eta_i^2 e^{-\xb_i' \hat \thetab}) \alpha_{i,j} + (\xb_i'(\hat \betab - \betab^\star))^2 e^{-\xb_i \hat \thetab} \alpha_{i,j} + 2 \epsilon_i \xb_i' (\betab^\star - \hat \betab) e^{\frac 12 \xb_i' (\thetab^\star - \hat \thetab)} \alpha_{i,j}
\]
where as before $\alpha_{i,j} = x_{i,j} / \| \Xb_j \|$.

\begin{lemma}
\label{lem:PMPLE_term2}
Let $C>0$, recall the definition of $\Bcal$ in \eqref{eq:A2_4}, and define
\[
\Theta_C = \left\{ \thetab \in \RR^T : \| \thetab - \thetab^\star \|_r \le C \right\}.
\]
Then 
\[
\sup_{\betab \in \Bcal, \thetab \in \Thetab_C} \left| \sum_{i=1}^n (1 - \hat \eta_i^2 e^{-\xb_i'\hat \thetab})\alpha_{i,j} - \sum_{i=1}^n (1 - \eta_i^2 e^{-\xb_i'\hat \thetab})\alpha_{i,j} \right| = \tilde o_\PP(\sqrt n).
\]
\end{lemma}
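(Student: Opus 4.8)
The plan is to use the gradient expansion displayed immediately before the lemma statement, which, writing $\hat\betab$ and $\hat\thetab$ for the generic elements of $\Bcal$ and $\Theta_C$ ranged over by the supremum, expresses the quantity of interest as
\[
\sum_{i=1}^n (1 - \hat\eta_i^2 e^{-\xb_i'\hat\thetab})\alpha_{i,j} - \sum_{i=1}^n (1 - \eta_i^2 e^{-\xb_i'\hat\thetab})\alpha_{i,j} = B_1 + B_2,
\]
with $B_1 = \sum_{i=1}^n (\xb_i'(\betab^\star - \hat\betab))^2 e^{-\xb_i'\hat\thetab}\alpha_{i,j}$ and $B_2 = 2\sum_{i=1}^n \epsilon_i \xb_i'(\betab^\star - \hat\betab) e^{\frac 12 \xb_i'(\thetab^\star - \hat\thetab)}\alpha_{i,j}$. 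I would bound $\sup_{\hat\betab \in \Bcal, \hat\thetab \in \Theta_C}|B_1|$ and $\sup_{\hat\betab \in \Bcal, \hat\thetab \in \Theta_C}|B_2|$ separately, uniformly over $j \in [p]$; since $\log p = \tilde O(1)$, carrying the maximum over the $p$ indices $j$ inflates the final bound only by sub-polynomial factors.

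The term $B_1$ admits a purely deterministic bound. On $\Theta_C$ the definition of $\|\cdot\|_r$ forces $\|\Xb(\hat\thetab - \thetab^\star)\|_\infty \le C\sqrt{t\log n}/\sqrt n = \tilde o(1)$, so with $\underline\sigma^{-1} = \tilde O(1)$ we get $\max_{i}e^{-\xb_i'\hat\thetab} = \tilde O(1)$. Combining this with $\max_{i,j}|\alpha_{i,j}| = O((\log p)^{-1/2})$ from \eqref{eq:incoherence} and the prediction-error bound $\sum_i (\xb_i'(\betab^\star - \hat\betab))^2 = \|\Xb(\betab^\star - \hat\betab)\|^2 \le r = \tilde o(\sqrt n)$ valid on $\Bcal$, I obtain $\sup|B_1| \le \tilde O(1)\cdot r = \tilde o(\sqrt n)$, uniformly over $\Bcal$, $\Theta_C$, and $j$.

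The term $B_2$ is the crux and is controlled by the covering argument of Lemma \ref{lem:MPLE_diff}. For fixed $(\hat\betab, \hat\thetab, j)$, $B_2$ is a centered Gaussian in $\epsilonb$ whose variance
\[
v = 4\sum_{i=1}^n (\xb_i'(\betab^\star - \hat\betab))^2 e^{\xb_i'(\thetab^\star - \hat\thetab)}\alpha_{i,j}^2 \le \tilde O(1)\,(\log p)^{-1}\,\|\Xb(\betab^\star - \hat\betab)\|^2 = \tilde o(\sqrt n)
\]
is small, again by the bounds of the preceding paragraph. I would then reuse the metric entropy of Lemma \ref{lem:MPLE_diff}: covering $\Bcal$ by balls of radius $\gamma_\beta = n^{-1}$ and $\Theta_C$ by balls of radius $\gamma_\theta = n^{-2}$ gives $\log N(\gamma) = O((\hat s + t)\log p)$, and the within-ball fluctuations are $o_\PP(1)$ by the same mean-value-theorem Lipschitz estimates (the extra bounded weight $\alpha_{i,j}$ only helps), bounded via $\|\epsilonb\| = O_\PP(\sqrt n)$ and (A1). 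Applying Gaussian concentration at each center and a union bound over the $N(\gamma)$ centers and the $p$ indices $j$ then yields
\[
\sup_{\hat\betab \in \Bcal, \hat\thetab \in \Theta_C,\, j}|B_2| = O_\PP\left( \sqrt{v\,\log N(\gamma)} \right) = \tilde o_\PP(\sqrt n),
\]
since $\hat s, t = \tilde o(\sqrt n)$ and $r = O(\hat s)$.

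The main obstacle is exactly this balance for $B_2$: the prediction-error control $r = \tilde o(\sqrt n)$ keeps the Gaussian variance $v$ small, while the sparsity $\hat s = \tilde o(\sqrt n)$ keeps the metric entropy $\log N(\gamma)$ small, and the product of these two quantities under the square root lands precisely at $\tilde o(\sqrt n)$. Summing the bounds on $B_1$ and $B_2$ completes the proof.
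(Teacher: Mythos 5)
Your proposal is correct and follows essentially the same route as the paper: the identical decomposition into the quadratic term $B_1$ (bounded deterministically by $\max_{i}|\alpha_{i,j}|\,\underline\sigma^{-2}\,\|\Xb(\betab^\star-\hat\betab)\|^2 = \tilde O(r) = \tilde o(\sqrt n)$) and the cross term $B_2$, which the paper also handles by Gaussian concentration at covering centers with metric entropy $O((\hat s + t)\log p)$ inherited from Lemma \ref{lem:MPLE_diff}, yielding the same $O_\PP(\sqrt{r(\hat s+t)\log p}) = \tilde o_\PP(\sqrt n)$ bound. Your only (harmless) deviation is invoking the incoherence bound $\max_{i,j}|\alpha_{i,j}| = O((\log p)^{-1/2})$ where the paper settles for $O_\PP(1)$.
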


\begin{proof}
Consider the first term,
\[
\sum_{i = 1}^n (\xb_i'(\hat \betab - \betab^\star))^2 e^{-\xb_i \hat \thetab} \alpha_{i,j} = O_\PP \left( \max_{i \in [n]} \frac{|\alpha_{i,j}|}{\sigma_i^2} \| \Xb (\hat \betab - \betab^\star) \|^2 \right) = \tilde O_\PP(\hat s) = \tilde o_\PP(\sqrt{n}).
\]
For the second term, we will use the familiar covering arguments over the sets $\Bcal, \Theta_C$.
For fixed $\hat \betab, \hat \thetab$,
\[
\begin{aligned}
&\sum_{i = 1}^n \epsilon_i \xb_i' (\betab^\star - \hat \betab) e^{\frac 12 \xb_i' (\thetab^\star - \hat \thetab)} \alpha_{i,j} \le  \sqrt{2 \max_{i\in [n]} \left(e^{\xb_i' (\thetab^\star - \hat \thetab)} |\alpha_{i,j}|^2\right) \| \Xb (\hat \betab - \betab^\star)\|^2 \log(1 / \eta)} \\
&\max_{i \in [n]} e^{ \xb_i' (\thetab^\star - \hat \thetab)} = 1 + o_\PP(1)\\
&\max_{i \in [n], j \in [p]} |\alpha_{i,j}| = O_\PP(1).
\end{aligned}
\]
by \eqref{eq:pred_theta}.
Hence, uniformly over a set of $\hat \betab,\hat\thetab$ of size $N$,
\[
\sum_{i = 1}^n \epsilon_i \xb_i' (\betab^\star - \hat \betab) e^{\frac 12 \xb_i' (\thetab^\star - \hat \thetab)} \alpha_{i,j} = \tilde O_\PP(\sqrt{r \log N}). 
\]
For pairs $\betab_0,\betab_1 \in \Bcal$,
\[
\begin{aligned}
&\left| \sum_{i = 1}^n \epsilon_i \xb_i' (\betab^\star - \betab_0) e^{\frac 12 \xb_i' (\thetab^\star - \hat \thetab)} \alpha_{i,j} - \sum_{i = 1}^n \epsilon_i \xb_i' (\betab^\star - \betab_1) e^{\frac 12 \xb_i' (\thetab^\star - \hat \thetab)} \alpha_{i,j} \right| \\
&= O_\PP \left( \| \epsilon\| \| \Xb (\betab_0 - \betab_1) \| \right) = O_\PP(\sqrt{nr})
\end{aligned}
\]
uniformly over $\hat \thetab \in \Theta_C$.
Recall that, for the set $\Bcal$, the metric entropy (in the norm, $\| \Xb \betab \|$) is $O( \hat s \log p)$, and similarly the metric entropy of the allowed $\hat \thetab$ is $O(t \log p)$.
So the above bound becomes
\[
\max_{j \in T^C} \sum_{i = 1}^n \epsilon_i \xb_i' (\betab^\star - \hat \betab) e^{\frac 12 \xb_i' (\thetab^\star - \hat \thetab)} x_{i,j} = O_\PP(\sqrt{r(\hat s + t ) \log p})
\]
because $\| \Xb (\betab^\star - \hat \betab) \|_\infty \le \| \Xb (\betab^\star - \hat \betab) \|$. 
\end{proof}

Hence, by Lemma \ref{lem:PMPLE_term2},
\[
|\max_{j \in T^C} \sum_{i = 1}^n (1 - \hat \eta_i^2 e^{-\xb_i' \hat \thetab}) x_{i,j}| = \max_{j \in T^C} |\sum_{i = 1}^n (1 - \eta_i^2 e^{-\xb_i' \hat \thetab}) x_{i,j}| + o_\PP(\sqrt n).
\]
So again it is sufficient that 
\[
\lambda_T = \Omega ( \sqrt{\log p} )
\]
for \eqref{eq:var_SCAD_2} to hold by identical reasoning to that of Theorem \ref{thm:known_mean}.
The arguments for \eqref{eq:var_SCAD_1} to hold are identical to that proof.

\subsection{Proof of Corollary \ref{cor:stage2_belloni}.}

We assume (A3) and (A4) throughout this proof.
We must show that the RF condition of \cite{Belloni2012Sparse} holds and that the prescribed restricted eigenvalue constant is $O_\PP(1)$.
The restricted eigenvalue constant is proportional to,
\[
\frac{\max_{j \in [p]} |\Upsilon_j|}{\min_{j \in [p]} |\Upsilon_j|} \textrm{ for }\Upsilon_j = \frac 1n \sum_{i=1}^n x_{i,j}^2 \epsilon_i^2 \sigma_i^2.
\]
It can be shown using $\chi^2$ concentration that 
\[
\max_{j \in [p]} |\Upsilon_j|= O_\PP\left( \frac{1}{n} \sum_{i=1}^n x_{i,j}^2 \sigma_i^2 \right), \quad \min_{j \in [p]} |\Upsilon_j| = \Omega_\PP\left( \frac{1}{n} \sum_{i=1}^n x_{i,j}^2 \sigma_i^2 \right)
\]
 uniformly in $j$, which we assume approaches a constant.

RF (i) follows from the above argument.
RF (ii) follows because $\EE x_{i,j}^3 \eta_i^3 = 0$, due to symmetry.
RF (iii) follows if we further assume that $\log p = \tilde o(n)$, which we have.
By Lemma 3 of \cite{Belloni2012Sparse} RF (iv) holds. 

\section{Proof of Theorem \ref{thm:wls_mean}} 

Let $\hat \Wb = \diag \{ \hat \sigma_i^{-1}\}_{i=1}^n$. 
Consider the WLS estimator with oracle knowledge of $S$ as a function of $\hat \thetab$,
\[
\hat \betab_S(\hat \thetab) = (\Xb_S' \hat \Wb^2 \Xb_S)^{-1} \Xb_S'\hat \Wb^2 \yb.
\]
Let $\hat \Pcal$ be the projection onto the column space of $\hat \Wb \Xb_S$.
Then $\hat \Wb \Xb_S \hat \betab_S(\hat \thetab) = \hat \Pcal \yb$ because it is the WLS estimator and $\hat \Wb \Xb_S'(\yb - \hat \Wb \Xb\hat\betab_S(\hat \thetab)) = \hat \Wb \Xb_S' (\yb - \hat \Pcal \yb) = \zero$.  
Furthermore,
\[
\hat\betab_S(\hat \thetab) - \betab_S^\star = (\Xb_S' \hat \Wb^2 \Xb_S)^{-1} \Xb_S'\hat \Wb^2 \etab = (\Xb_S' \hat \Wb^2 \Xb_S)^{-1} \Xb_S' \hat \Wb^2 {\Wb^\star}^{-1} \epsilonb.
\]
The following is a preliminary that is essential to the remaining proofs.
\begin{lemma}
\label{lem:Dhat}
Consider the reweighted gram matrix as a function of $\thetab$, $\Db(\thetab)_{SS} = \Xb_S' \diag(\sigmab(\thetab)^{-2}) \Xb_S$.
Then for any set $\Theta \subset \RR^p$ such that 
\[
\sup_{\thetab \in \Theta} \max_{i \in [n]} \left|\frac{\sigma_i(\thetab^\star)^2}{\sigma_i(\thetab)^2} - 1 \right| = o_\PP(1)
\]
then 
\[
\sup_{\thetab \in \Theta} \Lambda_{\max} (\Db(\thetab)_{SS}) = \Lambda_{\max} (\Db(\thetab^\star)_{SS}) (1 + o_\PP(1)).
\]
\end{lemma}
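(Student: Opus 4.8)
The plan is to treat $\Db(\thetab)_{SS}$ as a small multiplicative perturbation of $\Db(\thetab^\star)_{SS}$ and to control it through a quadratic-form sandwich. First I would write the two Gram matrices with the shorthand $w_i(\thetab) = \sigma_i(\thetab)^{-2}$ and $w_i^\star = \sigma_i(\thetab^\star)^{-2}$, so that
\[
\Db(\thetab)_{SS} - \Db(\thetab^\star)_{SS} = \Xb_S' \diag\{w_i(\thetab) - w_i^\star\}_{i=1}^n \Xb_S = \sum_{i=1}^n (w_i(\thetab) - w_i^\star)\, \xb_{i,S} \xb_{i,S}'.
\]
The key algebraic observation is that each diagonal entry factors as $w_i(\thetab) - w_i^\star = w_i^\star\left(\sigma_i(\thetab^\star)^2/\sigma_i(\thetab)^2 - 1\right)$, so the relative perturbation of the weights is exactly the quantity assumed to vanish in the hypothesis. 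Writing $\delta_n := \sup_{\thetab \in \Theta} \max_{i \in [n]} |\sigma_i(\thetab^\star)^2/\sigma_i(\thetab)^2 - 1| = o_\PP(1)$, this yields the pointwise bound $|w_i(\thetab) - w_i^\star| \le \delta_n\, w_i^\star$ simultaneously over all $i \in [n]$ and all $\thetab \in \Theta$.

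Next I would fix an arbitrary unit vector $\zb \in \RR^{|S|}$ and bound the quadratic form of the difference by exploiting that $\Db(\thetab^\star)_{SS}$ is positive semidefinite:
\[
\left| \zb'\left(\Db(\thetab)_{SS} - \Db(\thetab^\star)_{SS}\right)\zb \right| \le \delta_n \sum_{i=1}^n w_i^\star (\xb_{i,S}'\zb)^2 = \delta_n\, \zb'\Db(\thetab^\star)_{SS}\zb \le \delta_n\, \Lambda_{\max}\left(\Db(\thetab^\star)_{SS}\right).
\]
Since the right-hand side is independent of $\zb$ and the bound holds uniformly over $\thetab \in \Theta$, taking the supremum over $\|\zb\| = 1$ gives $\sup_{\thetab \in \Theta} \opnorm{\Db(\thetab)_{SS} - \Db(\thetab^\star)_{SS}}{2} \le \delta_n\, \Lambda_{\max}(\Db(\thetab^\star)_{SS})$.

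Finally I would invoke Weyl's inequality (equivalently, that $\Lambda_{\max}$ is $1$-Lipschitz in the operator norm) to transfer this to the top eigenvalues, obtaining $|\Lambda_{\max}(\Db(\thetab)_{SS}) - \Lambda_{\max}(\Db(\thetab^\star)_{SS})| \le \delta_n\, \Lambda_{\max}(\Db(\thetab^\star)_{SS})$ uniformly over $\Theta$. Rearranging and using $\delta_n = o_\PP(1)$ gives the claimed $\sup_{\thetab \in \Theta} \Lambda_{\max}(\Db(\thetab)_{SS}) = \Lambda_{\max}(\Db(\thetab^\star)_{SS})(1 + o_\PP(1))$.

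I do not expect a serious obstacle here; the only point needing care is that the perturbation control must be uniform in $\thetab$, which is automatic because the governing quantity $\delta_n$ is itself defined as a supremum over $\Theta$. It is also worth noting that the argument uses only the positive semidefiniteness of $\Db(\thetab^\star)_{SS}$ so that the quadratic-form inequality is sign-definite; no lower bound on its eigenvalues is required for this statement, which is why the well-conditioning assumption on $\Db_{SS}$ in Theorem \ref{thm:wls_mean} is not invoked at this stage.
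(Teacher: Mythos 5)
Your proof is correct and follows essentially the same route as the paper: both reduce the claim to the pointwise weight factorization $\sigma_i(\thetab)^{-2} = \sigma_i(\thetab^\star)^{-2}\,\bigl(\sigma_i(\thetab^\star)^2/\sigma_i(\thetab)^2\bigr)$ and transfer the uniform ratio bound to the quadratic form $\sum_{i} \sigma_i^{-2}(\xb_{i,S}'\zb)^2$. The paper sandwiches $\Lambda_{\max}(\Db(\thetab)_{SS})$ directly inside the supremum over unit vectors, while you route the identical estimate through the difference matrix and Weyl's inequality --- a presentational difference only, and if anything your version makes explicit the two-sided bound that the paper's terse ``the result easily follows'' leaves implicit.
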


\begin{proof}
\[
\begin{aligned}
\Lambda_{\max} (\Db(\thetab)_{SS}) = \sup_{\| \deltab \| = 1} \sum_{i \in [n]} \sigma_i(\thetab)^{-2} (\xb_{i,S}' \deltab)^2 \le \max_{i \in [n]} \left|\frac{\sigma_i(\thetab^\star)^2}{\sigma_i(\thetab)^2} - 1 \right|\sup_{\| \deltab \| = 1} \sum_{i \in [n]} \sigma_i(\thetab^\star)^{-2} (\xb_{i,S}' \deltab)^2.
\end{aligned}
\]
The result easily follows.
\end{proof}
We will now establish that 
\[
\hat \betab_S(\hat \thetab) \approx \hat \betab_S(\thetab^\star)
\]
We state this (and demonstrate what we mean by $\approx$) in the following Lemma.

\begin{lemma}
\label{lem:pseudo_beta_error}
Let $\hat \thetab$ be the second stage estimator and assume the conditions of Theorem \ref{thm:unknown_mean}.
\[
\| \hat \betab_{S}(\hat \thetab) - \hat \betab_{S}(\thetab^\star) \|_\infty = \tilde o_\PP\left( \frac{1}{\sqrt n} \right)
\]
and thus
\[
\| \hat \betab_{S}(\hat \thetab) - \hat \betab_{S}(\thetab^\star) \|_2 = \tilde o_\PP\left( \frac{\sqrt s}{\sqrt n} \right).
\]
\end{lemma}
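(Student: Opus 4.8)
The plan is to turn the difference $\hat\betab_S(\hat\thetab)-\hat\betab_S(\thetab^\star)$ into a single bilinear expression in the weight perturbation and the Gaussian noise, reduce its $\ell_\infty$ norm to a conditional Gaussian tail bound, and borrow all of the required magnitude estimates from the stage-2 analysis. Write $\Ab=\Xb_S'\Wb^{\star 2}\Xb_S=\Db(\thetab^\star)_{SS}$, $\hat\Ab=\Xb_S'\hat\Wb^2\Xb_S=\Db(\hat\thetab)_{SS}$, and $\Delta_i=e^{-\xb_i'\hat\thetab}-e^{-\xb_i'\thetab^\star}$. Using the resolvent identity $\hat\Ab^{-1}-\Ab^{-1}=-\hat\Ab^{-1}(\hat\Ab-\Ab)\Ab^{-1}$ together with $\hat\Ab-\Ab=\Xb_S'\diag(\Delta_i)\Xb_S$, I would first establish the exact identity
\[
\hat\betab_S(\hat\thetab)-\hat\betab_S(\thetab^\star)=\hat\Ab^{-1}\Xb_S'\diag(\Delta_i)\,\Wb^{\star -1}(I_n-\Pcal^\star)\,\epsilonb,
\]
where $\Pcal^\star$ is the orthogonal projection onto $\mathrm{col}(\Wb^\star\Xb_S)$; this is where the identity $(I_n-\Xb_S\Ab^{-1}\Xb_S'\Wb^{\star 2})\Wb^{\star -1}=\Wb^{\star -1}(I_n-\Pcal^\star)$ enters. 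Because $I_n-\Pcal^\star$ is a contraction, conditional on $\hat\thetab$ the $j$th coordinate is a centred Gaussian with variance at most $\max_{i}(\Delta_i^2\sigma_i^2)\,(\hat\Ab^{-1}\Xb_S'\Xb_S\hat\Ab^{-1})_{jj}$.

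Second, I would bound the two factors of this variance. By the mean value theorem $\Delta_i^2\sigma_i^2=\sigma_i^{-2}(e^{-\xb_i'(\hat\thetab-\thetab^\star)}-1)^2$, and the oracle control $\max_i|\xb_i'(\hat\thetab-\thetab^\star)|=\tilde O_\PP(\sqrt{t/n})$ coming from \eqref{eq:known_mean_oracle2} (as already derived for the unknown-mean estimator), together with \eqref{eq:pred_theta} and $\underline\sigma^{-2}=\tilde O(1)$, give $\max_i\Delta_i^2\sigma_i^2=\tilde O_\PP(t/n)$. For the second factor, \eqref{eq:pred_theta} makes the weight ratios satisfy the hypothesis of Lemma \ref{lem:Dhat}, so that lemma and its smallest-eigenvalue analogue, combined with the assumed well-conditioning of $\Db_{SS}$, yield $\Lambda_{\min}(\hat\Ab)=\Omega_\PP(n)$; moreover $\Lambda_{\max}(\Xb_S'\Xb_S)\le\overline\sigma^2\Lambda_{\max}(\Ab)=\tilde O(n)$ since $\overline\sigma=\tilde O(1)$. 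Hence $(\hat\Ab^{-1}\Xb_S'\Xb_S\hat\Ab^{-1})_{jj}\le\opnorm{\hat\Ab^{-1}}{2}^2\Lambda_{\max}(\Xb_S'\Xb_S)=\tilde O_\PP(1/n)$ uniformly in $j$, so the conditional variance is $\tilde O_\PP(t/n^2)$.

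Third, a Gaussian tail bound and a union bound over $j\in S$ would give $\max_{j\in S}|[\hat\betab_S(\hat\thetab)-\hat\betab_S(\thetab^\star)]_j|=\tilde O_\PP(\sqrt t/n)$, and since $t=\tilde o(\sqrt n)$ we have $\sqrt t/n=\tilde o(1/\sqrt n)$, which is exactly the claimed $\ell_\infty$ bound; the $\ell_2$ statement then follows from $\|\cdot\|_2\le\sqrt s\,\|\cdot\|_\infty$. The main obstacle is that $\hat\thetab$ is built from the residuals and is therefore not independent of $\epsilonb$, so the Gaussian tail bound cannot be applied after naively conditioning on $\hat\thetab$. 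I would handle this exactly as in Lemma \ref{lem:MPLE_diff} and the control of the $A_2$ term in the proof of Theorem \ref{thm:known_mean}: replace $\hat\thetab$ by a supremum over the neighbourhood $\Theta_C=\{\thetab:\|\thetab-\thetab^\star\|_r\le C\}$, which contains $\hat\thetab$ with probability tending to one by Theorem \ref{thm:unknown_mean}, prove the Gaussian bound at the centres of a net of $\Theta_C$ whose entropy $\tilde O(t\log p)=\tilde O(\sqrt n)$ is absorbed into the logarithmic factor, and bound the increments of the scalar map $\thetab\mapsto$ the $j$th coordinate of $\hat\Ab(\thetab)^{-1}\Xb_S'\diag(\Delta_i(\thetab))\Wb^{\star -1}(I_n-\Pcal^\star)\epsilonb$ within each ball by its Lipschitz constant, using the smoothness of $\thetab\mapsto\Delta_i(\thetab)$ and $\thetab\mapsto\hat\Ab(\thetab)^{-1}$. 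Verifying that these within-ball increments are of smaller order than $\sqrt t/n$ is the one genuinely new computation; every magnitude estimate it relies on is already available from the stage-2 proofs.
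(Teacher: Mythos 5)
Your proposal is correct, and it reaches the paper's conclusion by a genuinely different route for the key estimate, while sharing the same dependence-handling skeleton. The paper writes $\hat\beta_j(\thetab)=\Lb_j(\thetab)'\epsilonb$, reparametrizes by $\gammab=\Xb\thetab$, and applies the mean value theorem, so the heart of its argument is the explicit differentiation of $\Lb_j$ in Lemma \ref{lem:beta_theta_grad}, which yields $\|\Lb_j(\hat\thetab)-\Lb_j(\thetab^\star)\|=\tilde O_\PP(\sqrt t/n)$ plus a crude $O_\PP(n^q)$ operator-norm Lipschitz bound; both feed the net-plus-Gaussian-concentration argument of Lemma \ref{lem:lipschitz_cover}. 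You instead obtain the coefficient vector \emph{exactly}, via the resolvent identity, as $\hat\Ab^{-1}\Xb_S'\diag(\Delta_i)\Wb^{\star-1}(\Ib-\Pcal^\star)\epsilonb$ with $\Delta_i=e^{-\xb_i'\hat\thetab}-e^{-\xb_i'\thetab^\star}$ and $\Pcal^\star$ the projection onto the column space of $\Wb^\star\Xb_S$ (your identity checks out algebraically), and then the contraction property of $\Ib-\Pcal^\star$ reduces everything to $\max_i\Delta_i^2\sigma_i^2=\tilde O_\PP(t/n)$ and two spectral bounds, giving the same per-coordinate standard deviation $\tilde O(\sqrt t/n)$ with no derivative calculus for the main term. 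This is a real simplification: the perturbation enters only through scalar weights and eigenvalue bounds that hold deterministically and uniformly over $\Theta_C$, which makes the union bound over net centres clean. What you do not escape is the crude within-ball Lipschitz computation --- the analogue of the second half of Lemma \ref{lem:beta_theta_grad} --- since $\hat\thetab$ is built from $\epsilonb$ and forces the covering argument in both proofs; you correctly flag this, and any polynomial-in-$n$ Lipschitz bound suffices because the net radius can be taken polynomially small at only $O(t\log n)$ entropy cost. Two bookkeeping points: the conditions $\Lambda_{\max}(\Db_{SS})=O(1)$ and $\Lambda_{\max}(\Db_{SS}^{-1})=O(1)$ that you invoke are hypotheses of Theorem \ref{thm:wls_mean}, not Theorem \ref{thm:unknown_mean} (the paper's own proof also needs them, as hypotheses of Lemma \ref{lem:beta_theta_grad}, so this matches actual usage); and the net entropy contributes a factor $\sqrt{\log(p|\Kcal|)}=O(\sqrt{t\log n+\log p})=\tilde O(\sqrt t)$, not merely a polylogarithm, so your final bound is $\tilde O_\PP(t/n)$ rather than $\tilde O_\PP(\sqrt t/n)$ --- still $\tilde o_\PP(1/\sqrt n)$ because $t=\tilde o(\sqrt n)$, and identical to the rate the paper itself obtains.
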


\begin{proof}
Let us begin with a key lemma.
\begin{lemma}
\label{lem:lipschitz_cover}
Consider $\hat \betab_S(\hat \thetab)$ as a function of $\hat \thetab \in \RR^T$.
Suppose that there is a parameter $L_n$ (possibly growing with $n,p$) such that for each $j \in [p]$, $\hat \betab_j (\thetab)$ is $L_n$-Lipschitz.
Then for any $\xi>0$ there is a covering $\{C_k \subset \RR^T\}_{k=1}^K$ of the unit cube such that for $\thetab_0,\thetab_1 \in C_k$,
\[
\| \hat \betab_S(\thetab_0) - \hat \betab_S(\thetab_1) \|_\infty \le \xi
\] 
the entropy number is bounded by
\[
|K| \le t \log (L_n \sqrt t / \xi).
\]
\end{lemma}

\begin{proof}
Let the cover elements $C_k$ consist of a grid of cubes with side length, $\xi / (L_n \sqrt t)$, it is clear that $|K| = (L_n \sqrt t / \xi)^t$.
Then by the pythagorean theorem the diameter of $C_k$ in $\ell_2$ norm is $\xi / L_n$ and for $\thetab_0, \thetab_1 \in C_k$,
\[
| \hat \betab_j(\thetab_0) - \hat \betab_j(\thetab_1) | \le \xi
\] 
by Lipschitzness.
\end{proof}

Let 
\[
\Lb_j (\hat \thetab) = (\eb_j' (\Xb_S' \hat \Wb \Xb_S)^{-1} \Xb_S' \hat \Wb^2 \Wb^\star)'
\]
so that $\hat \betab_j(\hat \thetab) = \Lb_j(\hat \thetab)' \epsilonb$.
Consider the re-parametrization of $\thetab$ by $\gammab = \Xb \thetab$, so we will write for each $i \in [n]$,
\[
\hat \gammab_i = \xb_i' \hat \thetab, \quad \gammab_i^\star = \xb_i' \thetab^\star.
\]
It is important to notice that the derivative of the variance as a function of $\gammab$ is then
\[
\frac{\partial \sigma^2_i(\gamma_i)}{\partial \gamma_i} = \frac{\partial \exp(\gamma_i)}{\partial \gamma_i} = \sigma^2_i(\gamma_i).
\]
Also, the gradient of $\hat \betab_j$ at $\tilde \thetab$ can be computed by
\[
\frac{\partial \hat \betab_j (\tilde \thetab)}{\partial \tilde \thetab} = \frac{\partial \Lb_j'(\tilde \thetab)}{\partial \tilde \thetab} \epsilonb.
\]
Furthermore, the mean value theorem states that for any $\hat \gammab$ there exists a $\tilde \gammab$ between $\hat \gammab$ and $\gammab^\star$ such that
\[
\hat \betab_S(\hat \gammab) - \hat \betab_S(\gammab^\star) = \frac{\partial \hat \betab (\tilde \gammab)'}{\partial \tilde \gammab} (\hat \gammab - \gammab^\star) = (\hat \gammab - \gammab^\star)' \frac{\partial \Lb_j'(\tilde \gammab)}{\partial \tilde \gammab} \epsilonb  
\]
Further define 
\[
\quad \Gamma = \{ \gammab \in \RR^n: \| \gammab - \gammab^\star\|_\infty \le C \sqrt{\frac tn \log n} \}, \quad \Theta = \{ \thetab \in \RR^T : \Xb \thetab \in \Gamma \}.
\]

\begin{lemma}
\label{lem:beta_theta_grad}
Define $\tilde \Db = \Db(\tilde \thetab)$ for $\tilde \thetab \in \Theta$.
Assume that
\[
\Lambda_{\max}\left(\tilde \Db_{SS}^{-1} \right) = \tilde O_\PP(1), \quad \Lambda_{\max}\left(\tilde \Db_{SS} \right) = \tilde O_\PP(1)
\]
uniformly over $\tilde \thetab \in \Theta$.
The following term appears in the use of the mean-value theorem and is bounded by,
\[
\sup_{\tilde \gammab \in \Gamma} \left\| \frac{\partial \Lb_j(\tilde \gammab)}{\partial \tilde \gammab} (\hat \gammab - \gammab^\star) \right\| = \tilde O_\PP \left( \frac{\sqrt{t}}{n} \right).
\]
There exists a constant $q > 0$ such that 
\[
\sup_{\tilde \thetab} \opnorm{ \frac{\partial \Lb_j(\tilde \thetab)}{\partial \tilde \thetab} }{} = O_\PP(n^q).
\]
\end{lemma}

\begin{proof}
Evaluating the partial derivative, (and let $\tilde \Db, \tilde \Wb, \tilde \sigmab$ be the Gram matrix, weight matrix, and standard deviations defined using $\tilde \gammab$)
\[
\begin{aligned}
&n \frac{\partial \Lb'_j(\tilde \gamma)}{\partial \tilde \gamma_k} = \eb_j' \left( \frac{\partial}{\partial \tilde \gamma_k} (\tilde \Db_{SS})^{-1} \right) \Xb_S' \tilde \Wb^2 (\Wb^{\star})^{-1} + \eb_j' (\tilde \Db_{SS})^{-1} \left( \frac{\partial}{\partial \gamma_k} \Xb_S' \tilde \Wb^2 (\Wb^{\star})^{-1} \right) \\
& = - \eb_j' \tilde \Db_{SS}^{-1} \left( \frac{\partial}{\partial \gamma_k} \tilde \Db_{SS} \right) \tilde \Db_{SS}^{-1} \Xb_S' \tilde \Wb^2 (\Wb^{\star})^{-1}  +  \eb_j' (\tilde \Db_{SS})^{-1} \left( \xb_{k,S} \frac{\tilde \sigma_k^2}{\sigma^\star_k} \eb_k' \right) \\
& = - \frac 1n \eb_j' \tilde \Db_{SS}^{-1} \left( \tilde \sigma_k^2 \xb_{k,S} \xb_{k,S}' \right) \tilde \Db_{SS}^{-1} \Xb_S' \tilde \Wb^2 (\Wb^{\star})^{-1}  + \eb_j' (\tilde \Db_{SS})^{-1} \left( \xb_{k,S} \frac{\tilde \sigma_k^2}{\sigma^\star_k} \eb_k'\right). \\
\end{aligned}
\]
We will now focus on bounding 
\[
\begin{aligned}
&\left\| \frac{\partial \Lb_j'(\tilde \gammab)}{\partial \tilde \gammab} (\hat \gammab - \gammab^\star) \right\| \le \frac{1}{n^2} \left\| \sum_{k=1}^n \eb_j' \tilde \Db_{SS}^{-1} \left( \tilde \sigma_k^2 \xb_{k,S} \xb_{k,S}' \right) \tilde \Db_{SS}^{-1} \Xb_S' \tilde \Wb^2 (\Wb^{\star})^{-1} (\hat \gamma_k - \gamma_k^\star) \right\| \\
&+ \frac{1}{n} \left\| \sum_{k=1}^n \eb_j' (\tilde \Db_{SS})^{-1} \left( \xb_{k,S} \frac{\tilde \sigma_k^2}{\sigma^\star_k} \eb_k'\right) (\hat \gamma_k - \gamma_k^\star) \right\|.
\end{aligned}
\]
The first term is bounded by
\[
\begin{aligned}
&\frac{1}{n^2} \left\| \sum_{k=1}^n \eb_j' \tilde \Db_{SS}^{-1} \left( \tilde \sigma_k^2 \xb_{k,S} \xb_{k,S}' \right) \tilde \Db_{SS}^{-1} \Xb_S' \tilde \Wb^2 (\Wb^{\star})^{-1} (\hat \gamma_k - \gamma_k^\star) \right\| \\
&\le \frac{1}{n^2}  \| \hat \gammab - \gammab^\star \|_\infty \left\| \eb_j' \tilde \Db_{SS}^{-1} \left( \sum_{k=1}^n \tilde \sigma_k^2 \xb_{k,S} \xb_{k,S}' \right) \tilde \Db_{SS}^{-1} \Xb_S' \tilde \Wb^2 (\Wb^{\star})^{-1} \right\|\\
&= \frac{1}{n} \left\| \eb_j' \tilde \Db_{SS}^{-1} \Xb_S' \tilde \Wb^2 (\Wb^{\star})^{-1} \right\| \tilde O_\PP \left( \frac{\sqrt t}{\sqrt n} \right) = \frac{1}{n} \left\| \eb_j' \tilde \Db_{SS}^{-1} \Xb_S' \tilde \Wb^2 (\Wb^{\star})^{-1} \right\| \tilde O_\PP \left( \frac{\sqrt t}{\sqrt n} \right) \\
&= \tilde O_\PP \left( \frac{\sqrt t}{n \sqrt n} \right) \opnorm{\tilde \Db_{SS}^{-1} \Xb_S' \tilde \Wb}{} = \tilde O_\PP \left( \frac{\sqrt t}{n \sqrt n} \right) \sqrt{\Lambda_{\max}\left( n \tilde \Db_{SS}^{-1} \right) }\\
&= \tilde O_\PP \left( \frac{\sqrt t}{n} \right) \sqrt{\Lambda_{\max}\left(\tilde \Db_{SS}^{-1} \right) } = \tilde O_\PP \left( \frac{\sqrt t}{n} \right).
\end{aligned}
\]
The second term can be bounded by
\[
\begin{aligned}
&\frac{1}{n} \left\| \sum_{k=1}^n \eb_j' (\tilde \Db_{SS})^{-1} \left( \xb_{k,S} \frac{\tilde \sigma_k^2}{\sigma^\star_k} \eb_k' \right) (\hat \gamma_k - \gamma_k^\star) \right\| \le \frac 1n \left[ \max_{k \in [p]} \frac{\tilde \sigma_k}{\sigma_k^\star} \right] \left[ \max_{k \in [p]} |\hat\gamma_k - \gamma_k^\star| \right] \left\| \sum_{k=1}^n \eb_j' (\tilde \Db_{SS})^{-1} \xb_{k,S} \tilde \sigma_k  \eb_k' \right\| \\
&= \tilde O_\PP \left( \frac{\sqrt t}{n\sqrt n} \left\| \eb_j' \tilde \Db_{SS}^{-1} \Xb_S' \tilde \Wb\right\| \right) = \tilde O_\PP \left( \frac{\sqrt t}{n\sqrt n} \opnorm{\tilde \Db_{SS}^{-1} \Xb_S' \tilde \Wb}{} \right) = \tilde O_\PP \left( \frac{\sqrt t}{n} \right).
\end{aligned}
\]
%\[
%\sum_{k=1}^n \tilde \sigma_k^2 \left\|\tilde \Db_{SS}^{-1} \xb_{k,S} \right\|^2 = \sum_{k=1}^n \tilde \sigma_k^2 \xb_{k,S}' \tilde \Db_{SS}^{-2} \xb_{k,S} = \sum_{k=1}^n \tr \left(\tilde \Db_{SS}^{-2} ( \tilde \sigma_k^2 \xb_{k,S} \xb_{k,S}') \right) = n \tr \left( \tilde \Db_{SS}^{-1} \right)
%\]
We will now show that there exists a constant $q > 0$ such that 
\[
\sup_{\tilde \thetab} \opnorm{ \frac{\partial \Lb_j(\tilde \thetab)}{\partial \tilde \thetab} }{} = O_\PP(n^q)
\]
uniformly in $j$.
By the chain rule,
\[
\frac{\partial \Lb_j(\tilde \thetab)}{\partial \tilde \thetab} = \sum_{k = 1}^n \frac{\partial \Lb_j(\tilde \gammab)}{\partial \tilde \gamma_k} \frac{\partial \tilde \gamma_k}{\partial \tilde \thetab}
= \sum_{k=1}^n \frac{\partial \Lb_j(\tilde \gammab)}{\partial \tilde \gamma_k} \xb_{k,T}'.
\]
Thus,
\[
\begin{aligned}
&\opnorm{ \frac{\partial \Lb_j(\tilde \thetab)}{\partial \tilde \thetab} }{} \le \opnorm{\sum_{k=1}^n \frac{\partial \Lb_j(\tilde \gammab)}{\partial \tilde \gamma_k} \xb_{k,T}'}{} \le \sum_{k=1}^n \left\| \frac{\partial \Lb_j(\tilde \gammab)}{\partial \tilde \gamma_k} \right\| \| \xb_{k,T}' \| \le \sqrt{ \sum_{k=1}^n \left\| \frac{\partial \Lb_j(\tilde \gammab)}{\partial \tilde \gamma_k} \right\|^2 } \sqrt{\sum_{k=1}^n\ \| \xb_{k,T}' \|^2} \\
& = \sqrt{ \sum_{k=1}^n \left\| \frac{\partial \Lb_j(\tilde \gammab)}{\partial \tilde \gamma_k} \right\|^2 } \opnorm{\Xb_T}{F} = \sqrt{ \sum_{k=1}^n \left\| \frac{\partial \Lb_j(\tilde \gammab)}{\partial \tilde \gamma_k} \right\|^2 } O_\PP(\sqrt{tn}) = O_\PP(n \sqrt{t}) \max_{k \in [n]} \left\| \frac{\partial \Lb_j(\tilde \gammab)}{\partial \tilde \gamma_k} \right\|.
\end{aligned}
\]
As before,
\[
\left\| \frac{\partial \Lb_j(\tilde \gammab)}{\partial \tilde \gamma_k} \right\| \le \frac{1}{n^2} \left\| \eb_j' \tilde \Db_{SS}^{-1} \left( \tilde \sigma_k^2 \xb_{k,S} \xb_{k,S}' \right) \tilde \Db_{SS}^{-1} \Xb_S' \tilde \Wb^2 (\Wb^{\star})^{-1} \right\| + \frac{1}{n} \left\| \eb_j' (\tilde \Db_{SS})^{-1} \xb_{k,S} \frac{\tilde \sigma_k^2}{\sigma^\star_k} \eb_k' \right\|
\]
Controlling the first term,
\[
\begin{aligned}
&\left\| \eb_j' \tilde \Db_{SS}^{-1} \left( \tilde \sigma_k^2 \xb_{k,S} \xb_{k,S}' \right) \tilde \Db_{SS}^{-1} \Xb_S' \tilde \Wb^2 (\Wb^{\star})^{-1} \right\| \le \left\| \tilde \Db_{SS}^{-1} \tilde \sigma_k \xb_{k,S} \right\|^2 \opnorm{ \Xb_S' \tilde \Wb^2 (\Wb^{\star})^{-1}}{}\\
& \opnorm{ \Xb_S' \tilde \Wb^2 (\Wb^{\star})^{-1}}{} = \opnorm{ \Xb_S' \tilde \Wb }{} O_\PP(1) = \sqrt{n \Lambda_{\max}( \tilde \Db_{SS} )} O_\PP(1) = \tilde O_\PP(\sqrt n)\\
& \left\| \tilde \Db_{SS}^{-1} \tilde \sigma_k \xb_{k,S} \right\|^2 \le \sum_{k = 1}^n \left\| \tilde \Db_{SS}^{-1} \tilde \sigma_k \xb_{k,S} \right\|^2 = \sum_{k = 1}^n \tr \left( \tilde \Db_{SS}^{-1} \tilde \sigma^2_k \xb_{k,S} \xb_{k,S}' \tilde \Db_{SS}^{-1} \right) = n \tr \left( \tilde \Db_{SS}^{-1} \right) \\
&\le n^2 \Lambda_{\max} ( \tilde \Db_{SS}^{-1}) = \tilde O_\PP(n^2).
\end{aligned}
\]
Similarly controlling the second term,
\[
\left\| \eb_j' (\tilde \Db_{SS})^{-1} \xb_{k,S} \frac{\tilde \sigma_k^2}{\sigma^\star_k} \eb_k' \right\| \le \left\|(\tilde \Db_{SS})^{-1} \xb_{k,S} \frac{\tilde \sigma_k^2}{\sigma^\star_k} \right\| = \left\| (\tilde \Db_{SS})^{-1} \xb_{k,S} \tilde \sigma_k \right\| O_\PP(1) = \tilde O_\PP(n).
\]
Notice that all of the above bounds are uniform in $j$ and $\tilde \gammab \in \Gamma$.  Combining these we obtain,
\[
\left\| \frac{\partial \Lb_j(\tilde \gammab)}{\partial \tilde \gamma_k} \right\| = \tilde O_\PP (\sqrt n)
\]
and so uniformly,
\[
\sup_{\tilde \thetab} \opnorm{ \frac{\partial \Lb_j(\tilde \thetab)}{\partial \tilde \thetab} }{} = \tilde O_\PP(n \sqrt t) = o_\PP(n^{5/4}).
\]
\end{proof}

We can use Lemma \ref{lem:beta_theta_grad} to control the Lipschitz constant of $\hat \betab$ by 
\[
\left\| \frac{\partial \Lb_j'(\tilde \thetab)}{\partial \tilde \thetab} \epsilon \right\| \le \opnorm{ \frac{\partial \Lb_j'(\tilde \thetab)}{\partial \tilde \thetab}}{} \| \epsilon \| = O_\PP \left( n^{q+1/2} \right)
\]
Consider the covering of the space $\Theta$ (which lies within the unit ball for large enough $n$) from Lemma \ref{lem:lipschitz_cover} with $\xi = 1 / \sqrt n$ and let $\Kcal$ be cluster centers that lie within $\Theta$ (these may be just arbitrary choices of elements from the clusters).
With probability $1 - \delta$, for all $j \in [p]$ and $\hat \thetab \in \Kcal$, 
\[
\begin{aligned}
&|\hat \betab_j (\hat \thetab) - \hat \betab_j(\thetab^\star)| = |(\Lb_j(\hat \thetab) - \Lb_j(\thetab^\star))' \epsilonb| \le \| \Lb_j(\hat \thetab) - \Lb_j(\thetab^\star)\| \sqrt{2 \log (p |\Kcal| / \delta)} \\
&\le \sup_{\tilde \thetab} \left\| \frac{\partial \Lb'_j(\tilde \gammab)}{\partial \tilde \gammab} (\hat \gammab - \gammab^\star) \right\| \sqrt{2 \log (p |\Kcal| / \delta)} = \tilde O_\PP\left(\frac{t}{n} \right) = \tilde o_\PP\left(\frac{1}{\sqrt n} \right)
\end{aligned}
\]
Hence, by Lemma \ref{lem:lipschitz_cover} and the triangle inequality, 
\[
\forall \hat \thetab \in \Theta, \quad \|\hat \betab_S(\hat \thetab) - \hat \betab_S(\thetab^\star)\|_\infty = \tilde o_\PP\left(\frac{1}{\sqrt n} \right)
\]
uniformly.
And as a result of stage 2, $\hat \thetab \in \Theta$ for large enough $C,n$ by Theorem \ref{thm:unknown_mean}.
Furthermore,
\[
\|\hat \betab_S(\hat \thetab) - \hat \betab_S(\thetab^\star)\|_2 \le \sqrt s \|\hat \betab_S(\hat \thetab) - \hat \betab_S(\thetab^\star)\|_\infty = \tilde o_\PP\left(\frac{\sqrt s}{\sqrt n} \right).
\]
\end{proof}

Consider the error for the optimal WLS estimate,
\[
\hat \betab_S(\thetab^\star) - \betab_S^\star = (\Xb_S' (\Wb^\star)^2 \Xb_S)^{-1} \Xb_S' \Wb^\star \epsilonb = \frac 1n (\Db^\star_{SS})^{-1} \Xb_S' \Wb^\star \epsilonb.
\]
Hence, $\hat \betab_S(\thetab^\star) - \betab_S^\star$ is a zero mean Gaussian with variance $\frac 1n (\Db^\star_{SS})^{-1}$.
Therefore, 
\begin{equation}
  \label{eq:proof:scad_mean:1}
  \norm{\hat \betab_S(\thetab^\star) - \betab_S^\star}_\infty = O_\PP\left( \sqrt{\max_{j \in S} (\Db^\star_{SS})^{-1}_{jj} \frac{\log s}{n}} \right)
\end{equation}
Under the assumption that $(\Db_{SS}^{-1})_{jj} = O_\PP(1)$, we have that
\begin{equation*}
\begin{aligned}
  \min_{j \in S} |\hat \beta_j(\hat \thetab)| &
    \geq \min_{j \in S} |\beta^\star_j| - \norm{\hat\betab_S(\hat \thetab) - \hat \betab_S(\thetab^\star)}_\infty - \norm{\hat \betab_S(\thetab^\star) - \betab_S^\star}_\infty\\
    & \geq \beta_{\min} - O_\PP\left( \frac{\sqrt{\log s}}{\sqrt n} \right) = \omega_\PP(\lambda \| \hat \Wb \Xb_j\|)\\
\end{aligned}
\end{equation*}
as long as $\lambda = o(\beta_{\min}/\sqrt{n}) $ and $\| \hat \Wb \Xb_j \| = O_\PP(\sqrt n)$ (which holds as long as $\|\Wb^\star \Xb_j \| = O_\PP(\sqrt n)$).
Furthermore,
\[
\Xb\uSc' \hat \Wb (\hat \Wb \yb - \hat \Wb \Xb\hat\betab) = \Xb\uSc'\hat\Wb (\Ib - \hat \Pcal) \hat \Wb \yb = \Xb\uSc' \hat \Wb (\Ib - \hat \Pcal) \hat \Wb {\Wb^\star}^{-1} \epsilonb.
\]
Hence,
\[
| \Xb_j' \hat \Wb (\hat \Wb \yb - \hat \Wb \Xb\hat\betab) | = | \Xb_j' \hat \Wb (\Ib - \hat \Pcal) \hat \Wb {\Wb^\star}^{-1} \epsilonb | \le | \Xb_j' \hat \Wb^2 {\Wb^\star}^{-1} \epsilonb |.
\]
Because $\| \hat \sigmab / \sigmab^\star - \one \|_\infty = o_\PP(1)$,
\[
|\Xb_j' \hat \Wb^2 \Wb^\star \epsilonb| \le |\Xb_j' \hat \Wb \epsilonb| + |\Xb_j' \hat \Wb (\hat \Wb (\Wb^\star)^{-1} - I) \epsilonb| \le |\Xb_j' \hat \Wb \epsilonb| (1 + o_\PP(1))
\]
which is uniform in $j$. So,
\[
|\Xb_j' \hat \Wb \epsilonb| = |\Xb_j' \Wb^\star \epsilonb| (1 + o_\PP(1)) = O_\PP (\| \Wb^\star \Xb_j \| \sqrt{\log p}) = O_\PP(\| \hat \Wb \Xb_j \| \sqrt{\log p})
\]
uniformly over $j$.
Hence, if $\lambda = \omega(\sqrt{\log p} / n)$ then
\[
| \Xb_j' \hat \Wb (\hat \Wb \yb - \hat \Wb \Xb\hat\betab) | = O_\PP ( n \lambda \| \hat \Wb \Xb_j \| ).
\]
uniformly over $j \in [p]$.
In summary, the zero subgradient conditions hold as long as 
\[
\beta_{\min} = \omega\left( \frac{\sqrt{\log p}}{\sqrt n} \right), \quad \lambda = o\left(\frac{\beta_{\min}}{\sqrt n} \right), \quad \lambda = \omega \left( \frac{\sqrt{\log p}}{n} \right).
\]

\end{document}